\def\newblock{\ }%
\newcommand{\revision}[1]{{#1}}
\newcommand{\minorrevision}[1]{{#1}}
\theoremstyle{plain}
\newtheorem{theorem}{Theorem}
\newtheorem{lemma}{Lemma}
\newtheorem{proposition}{Proposition}
\newtheorem{corollary}{Corollary}
\theoremstyle{definition}
\newtheorem{definition}{Definition}
\newtheorem{remark}{Remark}
\newtheorem{example}{Example}
\def\proof#1{\par\smallskip\noindent{\itshape #1}\enskip\ignorespaces}
\def\endproof{\par\smallskip}
\providecommand{\Halmos}{\hfill\ensuremath{\square}}
\DeclareMathOperator*{\argmin}{arg\,min}
\providecommand{\coloneq}{\mathrel{:=}}
\begin{document}

\title{Sum of Squares Submodularity}
\author{Anna Deza\thanks{Mitsubishi Electric Research Laboratories (MERL), \texttt{deza@merl.com}.}
\and Georgina Hall\thanks{Decision Sciences, INSEAD, \texttt{georgina.hall@insead.edu}.}}
\date{}
\maketitle

\begin{abstract}
We introduce the notion of \emph{$t$-sum of squares (sos) submodularity}, which is a hierarchy, indexed by $t$, of sufficient algebraic conditions for certifying submodularity of set functions. We show that, for fixed $t$, each level of the hierarchy can be verified via a semidefinite program of size polynomial in $n$, the size of the ground set of the set function. This is particularly relevant given existing hardness results around testing whether a set function is submodular \citep{crama1989recognition}. We derive several equivalent algebraic characterizations of $t$-sos submodularity and identify  submodularity-preserving operations that also preserve $t$-sos submodularity. We further present a complete classification of the cases for which submodularity and $t$-sos submodularity coincide, as well as examples of $t$-sos-submodular functions. We demonstrate the usefulness of $t$-sos submodularity through three applications: (i) a new convex approach to submodular regression, involving minimal manual tuning; (ii) a systematic procedure to derive lower bounds on the submodularity ratio in approximate submodular maximization, and (iii) improved difference-of-submodular decompositions for difference-of-submodular optimization. Overall, our work builds a new bridge between discrete optimization and real algebraic geometry by connecting sum of squares--based algebraic certificates to a fundamental discrete structure, submodularity.
\end{abstract}

\noindent\textbf{Keywords:} submodularity, sum of squares polynomials, approximate submodularity, submodular regression, difference of submodular optimization

\medskip


\section{Introduction} \label{sec:intro}

Submodularity is a key structural property of set functions, encoding a notion of \emph{diminishing returns} \citep{edmonds2003submodular}. Indeed, a set function is submodular if adding an element to a smaller set increases the function value more than adding the same element to a larger set. As diminishing-return phenomena appear widely in real-world systems, submodular functions feature in a variety of areas, including in economics \citep{topkis1998supermodularity}, machine learning \citep{bilmes2022submodularity}, information theory \citep{fujishige2005submodular}, electrical networks \citep{narayanan1997submodular}, graph theory \citep{frank1993submodular}, and operations research \citep{queyranne1995scheduling, atamturk2020submodularity}. In addition to being ubiquitous, submodular functions also possess attractive computational properties, which have contributed to their significance in optimization theory. Unconstrained minimization of a submodular function, for example, can be done in polynomial time, even though its ground set is of exponential size \citep{schrijver2000combinatorial}. Furthermore, maximization of a non-decreasing submodular function under cardinality constraints, while NP-hard, can be done via the greedy algorithm with a well-known approximation guarantee of $\left(1-\frac{1}{e}\right)$ \citep{nemhauser1978analysis}. Due to its ubiquity and its appealing optimization properties, submodularity is sometimes referred to as ``discrete convexity''.

Unfortunately, in an echo of existing results for testing convexity \citep{ahmadi2013np}, \minorrevision{testing whether a set function of degree larger than or equal to $4$ is submodular is computationally intractable~\citep{crama1989recognition}; see Sections \ref{subsec:set.funcs.extensions} and \ref{subsec:np.hard.submod} for the definition of the degree of a set function and the exact hardness statement.} It thus becomes relevant to define sufficient conditions for submodularity, which are tractable to check. This paper introduces a hierarchy, indexed by $t$, of such conditions, referred to as \emph{$t$-sum of squares (sos) submodularity}. We provide a comprehensive theoretical study of $t$-sos submodularity and showcase its usefulness through three different applications. In more detail, our theoretical contributions are the following:
\begin{itemize}
\item In Section \ref{subsec:def.t.sos}, we define $t$-sos-submodularity as a hierarchy, indexed by $t$, of sufficient algebraic conditions for submodularity. We show that any $t$-sos-submodular function is submodular. We also show that testing whether a set function, defined over a set of size $n$, is $t$-sos-submodular amounts to solving a semidefinite program of size polynomial in $n$, for fixed $t$. We further show that any submodular function is $t$-sos-submodular, for large enough $t$.
\item In Section~\ref{subsec:equiv.charac.sos.submod}, we provide several equivalent characterizations of $t$-sos submodularity, corresponding to algebraic counterparts of standard definitions of submodularity. 
\item In Section~\ref{subsec:ops.preserving.sos.submod}, we discuss submodularity-preserving operations that also preserve $t$-sos submodularity. 
\item In Section \ref{subsec:gap.sos.submod.submod}, we present a complete classification, based on degree, of the cases where $t$-sos submodularity and submodularity coincide. In line with the above computational complexity result, we show that when the degree of the set function is less than 4, there is a small $t$ for which submodularity and $t$-sos submodularity coincide. In contrast, we provide an example of a degree-4 submodular set function which is only t-sos-submodular for very large $t.$ We further give examples of canonical submodular functions which are $t$-sos-submodular, for known $t$.
\end{itemize}
We also investigate application areas of $t$-sos submodularity:
\begin{itemize}
\item In Section \ref{subsec:regression}, we propose a new approach to \emph{submodular regression}, which involves fitting $t$-sos-submodular functions to data. We do this by solving a semidefinite program, whose only hyperparameters are the degree $k$ of the polynomial to fit and $t$. This contrasts with existing approaches which require extensive manual tuning and/or heuristics to do the fitting. We further provide an empirical comparison of our approach against two methods in the literature and show that we outperform both approaches in terms of generalization error on noisy data.
\item In Section \ref{subsec:approx.submod}, we consider the problem of maximizing, under cardinality constraints, approximately submodular functions. In this setting, the greedy algorithm provides an approximation guarantee of $\left( 1-\frac{1}{e^{\gamma^*}}\right)$, where $\gamma^*$ is the \emph{submodularity ratio} of the approximately submodular function \citep{das2018approximate,bian2017guarantees}. Unfortunately, $\gamma^*$ is NP-hard to compute. Current approaches to find lower bounds on $\gamma^*$ are ad hoc and rely on problem-specific insights. We propose a systematic approach to derive lower bounds on $\gamma^*$ using the framework of $t$-sos submodularity, and illustrate the power of this approach on an example from \citep{bian2017guarantees}.
\item In Section \ref{subsec:diff.submod}, we focus on \emph{difference of submodular (ds) optimization}. These are unconstrained optimization problems where the objective function is a set function, given as the difference of two submodular functions. Solving  ds programs can be done through the submodular-supermodular procedure (SSP) \citep{narasimhan2005submodular}. As any set function has an infinite number of ds decompositions, the question we address here is whether a specific decomposition can improve the performance of the SSP. We propose one such decomposition here and show that we are able to improve numerically on the quality of the solution found by the SSP by using this decomposition as opposed to a standard one.
\end{itemize}
We further provide, in Section~\ref{sec:prelims}, a brief review of some concepts that are central to the paper.

\section{Preliminaries} \label{sec:prelims}

To make this paper self-contained, we review some topics relating to submodularity, set functions, and sum of squares polynomials, which are used throughout the paper. \revision{Readers familiar with the sum of squares literature can skip Section \ref{subsec:ideals.varieties.sos}.}

\subsection{Submodular functions} \label{subsec:submodular}

Let $\Omega$ be a set of cardinality $n$ and let $2^{\Omega}$ be its power set, that is, the set of all subsets of $\Omega$. A set function is a function $f: 2^{\Omega} \rightarrow \mathbb{R}.$ An example of a set function is the \emph{cut function} of a graph $G=(V,E)$ for $|V|=n$, which maps a subset $S$ of nodes to the number of edges between $S$ and $V\backslash S$. Submodularity is a property of set functions, with many equivalent definitions, which we give now.

\begin{definition}\cite[Proposition 2.1]{nemhauser1978analysis}. \label{def:f.submodular}
A set function $f:2^{\Omega} \rightarrow \mathbb{R}$ is \emph{submodular} if any one of the seven equivalent conditions below hold:
\begin{enumerate}[$(i)$]
\item $f(S)+f(T) \geq f(S\cup T)+f(S\cap T), ~\forall S,T \subseteq \Omega.$
\item $f(S\cup \{i\}) -f(S) \geq f(T \cup \{i\})-f(T),~\forall S,T \subseteq \Omega$ such that $S \subseteq T$ and $i\notin T$.
\item $f(S \cup \{i\})-f(S) \geq f(S \cup \{i,j\})-f(S \cup \{j\}),~\forall S \in \Omega,~ i,j \in \Omega \backslash S,~i \neq j$.
\item $f(T) \leq f(S)+\sum_{i \in T \backslash S} (f(S\cup \{i\})-f(S))-\sum_{i \in S \backslash  T} (f(S\cup T)-f(S\cup T \backslash  \{i\})),~\forall S,T \subseteq \Omega.$
\item $f(T) \leq f(S)+\sum_{i \in T \backslash S} (f(S \cup \{i\})-f(S)),~\forall S \subseteq T \subseteq \Omega$.
\item $f(T) \leq f(S)-\sum_{i \in S \backslash T} (f(S)-f(S \backslash \{i\})), ~\forall T \subseteq S  \subseteq \Omega$.
\item $f(T) \leq f(S) -\sum_{i \in S \backslash T} (f(S)-f(S\backslash i))+\sum_{i \in T \backslash S} (f(S\cap T \cup \{i\})-f(S \cap T)),~\forall S,T \subseteq  \Omega.$
\end{enumerate}
\end{definition}

The cut function is a submodular function (see Section \ref{subsec:set.funcs.extensions}). The functions for which the inequalities in the definitions hold with equality are \emph{modular functions}. Obviously, any modular function is submodular, and it can be shown that any modular function is of the form $\sum_{i \in \Omega} c_i$, for $\{c_i\}_{i \in \Omega} \in \mathbb{R}$.

Of all the definitions given above, $(i)$ is typically used as the ``canonical'' definition, despite $(ii)$ being the definition that encodes the diminishing returns property, as discussed in the introduction. Definition $(iii)$ is widely known as well. Definitions $(v)$ and $(vi)$ often appear in proofs in the submodularity literature (e.g., in the correlation gap literature \citep{rubinstein2017combinatorial}; see also the discussion after Proposition 2.6 in \citep{bach2013learning} and Section~\ref{subsec:approx.submod}) while definitions $(iv)$ and $(vii)$ are much less used.

\subsection{Multilinear extensions of set functions} \label{subsec:set.funcs.extensions}

There is a one-to-one correspondence between set functions over $\Omega$ and multilinear polynomials in $n$ variables, through the \emph{multilinear extension} of a set function. Before defining this polynomial, we introduce some relevant notation. Let $x\mathrel{\mathop{:}}=(x_1,\ldots,x_n)$ and denote by $\mathbb{R}[x]$ the set of polynomials in variables $x$ with coefficients in $\mathbb{R}.$ For any $p \in \mathbb{R}[x],$ we use $\deg(p)$ to denote the degree of $p$. A polynomial in $\mathbb{R}[x]$ is said to be multilinear if all of its monomials contain only variables that are to the power of at most 1. For example, the polynomial $x_1x_2+x_2-1$ is multilinear, whereas $x_1^2+x_1x_2$ is not. By definition, the degree of any multilinear polynomial in $\mathbb{R}[x]$ is less than or equal to $n$. Furthermore, the derivative of a multilinear polynomial with respect to  coordinate $i$ does not depend on $x_i$. Throughout, we use $1_S$ for the indicator function of a set $S.$ We now define the multilinear extension of a set function.
\begin{definition} \label{def:mle}
The multilinear extension (MLE) $F$ of a set function $f:2^{\Omega} \rightarrow \mathbb{R}$  is the unique multilinear polynomial in $\mathbb{R}[x]$ satisfying $F(1_S)=f(S)$ for all $S\subseteq \Omega.$
\end{definition}

In the remainder, we maintain the convention of using lowercase letters for set functions (e.g., $f,g$) and uppercase letters (e.g., $F,G$) for their MLEs. Existence of the MLE is straightforward by noting that the polynomial 
$$F(x)=\sum_{T \subseteq \Omega} f(T) \prod_{i \in T} x_i \prod_{i \notin T} (1-x_i)$$
is multilinear. One can equivalently write (see, e.g., \citep{grabisch2000equivalent})
$$F(x)=\sum_{T \subseteq \Omega} a(T) \prod_{ i \in T} x_i, \text{ where } a(T)=\sum_{S \subseteq T} (-1)^{t-s} f(S).$$
\revision{Here, $t$ is the cardinality of $T$ and $s$ that of $S$.} A proof of uniqueness can be found in \citep{owen1972multilinear}. This result enables us to define the \emph{degree}, $\deg(f)$, of a set function $f$, which is none other than the degree of its multilinear extension. As an example, one can check that the MLE of the cut function introduced above is $$F(x_1,\ldots,x_n)=\sum_{1 \leq i <j \leq n }A_{ij} (x_i-2x_ix_j+x_j),$$ where $A_{ij}$ is entry $(i,j)$ of the adjacency matrix $A$ of $G$. Therefore, the cut function has degree~$2$.

The equivalence between set functions and multilinear polynomials implies that Definition \ref{def:f.submodular}, which involves a set function $f$, has a counterpart involving its MLE, $F$. For $x,y \in \mathbb{R}^n$, we denote by $x \circ y$ the component-wise multiplication of $x$ and $y$, that is, $x \circ y=(x_1y_1,\ldots,x_ny_n).$

\begin{proposition} \label{prop:submod.mle}
Let $f:2^{\Omega} \rightarrow \mathbb{R}$ be a set function and let $F$ be its multilinear extension. We have that $f$ is submodular if and only if any one of the following seven equivalent conditions holds:
\begin{enumerate}[(i')]
\item $F(x)+F(y) \geq F(x+y-x\circ y)+F(x\circ y), \forall x,y \in \{0,1\}^n$.
\item $\frac{\partial F(x)}{\partial x_i} \geq \frac{\partial F(y)}{\partial x_i}, ~\forall x,y \in \{0,1\}^n$ such that $x \leq y$,$~\forall i=1,\ldots,n$.
\item $\frac{\partial^2 F(x)}{\partial x_i \partial x_j} \leq 0, \forall x \in \{0,1\}^n, \forall i,j=1,\ldots,n.$
\item $F(x)-F(y)+\sum_{i=1}^n y_i(1-x_i) \cdot \frac{\partial F(x)}{\partial x_i}-\sum_{i=1}^n x_i(1-y_i) \cdot \frac{\partial F(x+y-x\circ y)}{\partial x_i} \geq 0,~\forall x,y \in \{0,1\}^n$.
\item $F(x)-F(y)+\sum_{i=1}^n (y_i-x_i) \cdot \frac{\partial F(x)}{\partial x_i} 
\geq 0,~\forall x,y \in \{0,1\}^n \text{ such that } x \leq y.$
\item $F(y)-F(x)-\sum_{i=1}^n (y_i-x_i)\cdot \frac{\partial F(y)}{\partial x_i} \geq 0, ~\forall x,y \in \{0,1\}^n \text{ such that } x \leq y.$
\item $F(y)-F(x)-\sum_{i=1}^n y_i(1-x_i) \frac{\partial F(y)}{\partial x_i}+\sum_{i=1}^n x_i(1-y_i)\frac{\partial F(x \circ y)}{\partial x_i} \geq 0, ~\forall x,y \in \{0,1\}^n.$
\end{enumerate}
\end{proposition}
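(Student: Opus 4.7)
The plan is to establish each equivalence $(k) \Leftrightarrow (k')$ pairwise for $k \in \{i, ii, \ldots, vii\}$. Since Definition~\ref{def:f.submodular} already asserts that $(i)$--$(vii)$ are equivalent characterizations of submodularity, proving pairwise equivalence with their primed counterparts immediately yields that $(i')$--$(vii')$ are equivalent to submodularity. Each pairwise equivalence will be obtained via the bijection $\{0,1\}^n \leftrightarrow 2^{\Omega}$ given by $x \mapsto S(x) := \{i : x_i = 1\}$.

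The main technical ingredient is a small dictionary that translates the polynomial quantities appearing in $(i')$--$(vii')$ into set-theoretic quantities at indicator vectors. By Definition~\ref{def:mle}, $F(1_S) = f(S)$. For $x = 1_S$ and $y = 1_T$, one has $x \circ y = 1_{S \cap T}$ and $x + y - x \circ y = 1_{S \cup T}$, while $y_i(1 - x_i)$ and $x_i(1 - y_i)$ are the indicators of $T \setminus S$ and $S \setminus T$, respectively. Multilinearity of $F$ lets us write $F(x) = x_i\, a_i(x_{-i}) + b_i(x_{-i})$, giving
$$\frac{\partial F}{\partial x_i}(1_S) \;=\; f(S \cup \{i\}) - f(S \setminus \{i\});$$
moreover, the mixed partial $\frac{\partial^2 F}{\partial x_i \partial x_j}$ does not depend on $x_i$ or $x_j$ and vanishes identically when $i = j$. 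With these identities in hand, plugging $x = 1_S$, $y = 1_T$ into $(k')$ recovers $(k)$ term by term; conversely, since $(k')$ is only required on $\{0,1\}^n$, the unprimed inequality implies the primed one.

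The equivalences $(i) \Leftrightarrow (i')$, $(iv) \Leftrightarrow (iv')$, $(v) \Leftrightarrow (v')$, $(vi) \Leftrightarrow (vi')$, and $(vii) \Leftrightarrow (vii')$ follow by mechanical substitution using the dictionary. The two pairs that require a short case analysis are $(ii) \Leftrightarrow (ii')$ and $(iii) \Leftrightarrow (iii')$, because the primed versions quantify over \emph{all} indices while the unprimed ones restrict to $i \notin T$ (respectively, $i, j \notin S$). For $(iii) \Rightarrow (iii')$, the additional cases are automatic from multilinearity: when $i = j$ the mixed partial vanishes, and otherwise we may replace $1_S$ by $1_{S \setminus \{i,j\}}$ without changing the mixed partial. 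For $(ii) \Rightarrow (ii')$, the cases ``$i \in T \setminus S$'' and ``$i \in S$'' reduce to $(ii)$ applied to $(S, T \setminus \{i\})$ and $(S \setminus \{i\}, T \setminus \{i\})$, respectively.

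The main obstacle is organizational rather than conceptual: the proposition bundles seven parallel algebraic reformulations, each of which must be matched to its combinatorial counterpart via the dictionary above, and the quantifier mismatch in $(ii)$ and $(iii)$ must be resolved as indicated. No property of $f$ beyond Definition~\ref{def:f.submodular} and the multilinearity of $F$ is needed.
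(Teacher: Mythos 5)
Your proposal is correct and follows exactly the route the paper sketches (and omits for brevity): pairwise translation of each condition $(k)\Leftrightarrow(k')$ via the indicator-function bijection together with the multilinear-derivative identity $\frac{\partial F}{\partial x_i}(1_S)=f(S\cup\{i\})-f(S\setminus\{i\})$ from Lemma~\ref{lem:deriv.multilin}. Your explicit handling of the quantifier mismatch in $(ii)$ and $(iii)$ (the $i\in T$ and $i=j$ cases) is a welcome detail that the paper's one-line sketch glosses over.
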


The proof of this proposition is straightforward: it involves showing that $(i')$ (resp. $(ii')$-$(vii')$) is equivalent to $(i)$ (resp. $(ii)$-$(vii)$), by translating set function operations into algebraic ones via the indicator function, and making use of a property of the  derivative of a multilinear polynomial given in Appendix \ref{appendix:background.material}. It is ommitted for brevity.

Conditions $(ii')-(iii')$ and, by extension, $(ii)-(iii)$ are referred to as the first and second order characterizations of submodularity, due to the presence of the first and second-order derivatives. They are folklore in the submodularity literature \citep{topkis1978minimizing}. A zero-th order characterization of submodularity exists as well, though it does not quite correspond to $(i')$. It is given by 
$$F(x)+F(y) \geq F(\max\{x,y\})+F(\min \{x,y\}), \forall x,y \in \{0,1\}^n.$$
We replace this condition by $(i')$ (which seems to be new, though quite elementary) in order to operate within the set of polynomials. Indeed, $F(\max \{x,y\})+F(\min \{x,y\})$ is not a polynomial, and using the zero-th order characterization from the literature would prevent us from making use of the sum of squares techniques central to this paper. Conditions $(iv')-(vii')$ correspond to conditions $(iv)-(vii)$ and, to the best of our knowledge, have not appeared in the literature as is.

\begin{remark}
All conditions in Proposition \ref{prop:submod.mle} are required to hold over $\{0,1\}^n$ or a subset of $\{0,1\}^n$, in line with their counterparts in $(i)$-$(vii).$ Interestingly, replacing the sets $\{0,1\}^n$ by the intervals $[0,1]^n$ does not lead to stronger conditions. In fact, it can be shown using \cite[Theorem 13.8]{crama2011boolean} that both sets of conditions are equivalent when $F$ is multilinear. However, their translations into algebraic certificates are not. In the $\{0,1\}^n$ setting, which we elect to work in, we provide sufficient conditions for submodularity of \emph{set functions}. If we were to replace $\{0,1\}^n$ by $[0,1]^n$, we would be providing sufficient conditions for continuous notions of submodularity, such as DR-submodularity \citep{bach2019submodular,bian2020continuous}. Among other advantages, working in $\{0,1\}^n$ enables us to leverage sum of squares techniques on quotient rings (see Section \ref{subsec:ideals.varieties.sos}), which are not applicable in the continuous setting and which lead to a simpler and cheaper hierarchy of sufficient conditions for submodularity.  
\end{remark}

\subsection{Hardness of testing submodularity} \label{subsec:np.hard.submod}

In this section, we review known complexity results relating to the decision problem $SUBMOD_d:$ given an integer $d$ and a set function $f$ of degree $d$, decide if $f$ is submodular. We place ourselves in the standard Turing model of computation where the input to every problem instance must be defined by a finite number of bits \citep{sipser1996introduction}, and emphasize that \revision{$f$ is given to us via its multilinear extension, that is, the input to the problem are the (rational) coefficients of the multilinear polynomial $F$ in the standard multilinear monomial basis.} Other complexity models are studied in \citep{seshadhri2014submodularity}. 

\begin{proposition}[\cite{gallo1989supermodular,crama1989recognition,billionnet1985maximizing}]  \label{prop:np.hardness}
The decision problem $SUBMOD_d$ is co-NP-complete for any fixed $d\geq 4$. It can be solved in polynomial time for $d=1,2,3.$
\end{proposition}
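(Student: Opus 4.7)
The plan is to break the argument into three parts: polynomial-time algorithms when $d \leq 3$, co-NP membership for all $d$, and co-NP-hardness when $d \geq 4$. The main obstacle is the hardness reduction.

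For the easy direction, I would invoke the second-order characterization $(iii')$ of Proposition~\ref{prop:submod.mle}: $f$ is submodular iff $q_{ij}(x) \coloneqq \partial^2 F/(\partial x_i \partial x_j) \leq 0$ on $\{0,1\}^n$ for every pair $i \neq j$. Since the partial derivative of a multilinear polynomial with respect to $x_i$ no longer depends on $x_i$, each $q_{ij}$ is multilinear of degree at most $d-2$. For $d=1$ the $q_{ij}$ vanish identically, so every such $f$ is submodular. For $d=2$ each $q_{ij}$ is the constant coefficient of $x_i x_j$ in $F$, so one inspects $\binom{n}{2}$ numbers. For $d=3$ each $q_{ij}$ is affine in the remaining variables, and its maximum over $\{0,1\}^{n-2}$ equals the sum of its non-negative coefficients; all $O(n^3)$ of these maxima can be computed directly. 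Co-NP membership for any $d$ is immediate: a pair $(S,T)$ violating condition~$(i)$ of Definition~\ref{def:f.submodular} forms a certificate of polynomial length, verifiable by four evaluations of $F$.

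For the hardness direction, I would reduce from the co-NP-complete problem of deciding, given a multilinear quadratic $q(x_1,\ldots,x_n)$, whether $q(x) \leq 0$ for all $x \in \{0,1\}^n$; this is co-NP-hard as the complement of unconstrained $0/1$ quadratic maximization (e.g., MAX-CUT). Introduce two fresh binary variables $y_1, y_2$ and define the multilinear, degree-$4$ polynomial
\[
F(x, y_1, y_2) \;=\; y_1 y_2 \, q(x) \;-\; K(y_1+y_2)\sum_{i=1}^n x_i \;-\; M \sum_{1 \leq i < j \leq n} x_i x_j,
\]
choosing $K$ and $M$ strictly larger than, respectively, $\max_{x \in \{0,1\}^n, k} |\partial q/\partial x_k(x)|$ and the largest $|c_{kl}|$ among coefficients of $q$ (both polynomially bounded in the bit size of $q$). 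A direct computation of the mixed partials shows that the only cross derivative of $F$ that depends on $q$ nontrivially is $\partial^2 F/(\partial y_1 \partial y_2) = q(x)$; all other cross derivatives are non-positive on $\{0,1\}^{n+2}$ by construction of the penalties. Hence $F$ is submodular iff $q \leq 0$ on $\{0,1\}^n$, yielding co-NP-hardness at $d=4$. Extension to $d > 4$ is a mechanical padding argument: multiplying the construction by extra indicator variables and re-inflating the penalties keeps the additional nuisance derivatives non-positive while raising the degree.

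The main obstacle is engineering the penalty terms in the reduction so that every ``off-target'' mixed partial is uniformly non-positive without disturbing the $q$-encoding in the target partial; once $K$ and $M$ are pinned down as above, correctness is a short verification, and co-NP-completeness follows by combining the reduction with the certificate argument for membership.
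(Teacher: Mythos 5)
Your proposal is essentially correct, but note that the paper does not prove this proposition at all: it is imported from the literature, with the discussion after the statement attributing the co-NP-completeness to \citet{gallo1989supermodular} (reduction from unconstrained $0$--$1$ quadratic optimization, giving strong hardness) and to \citet{crama1989recognition} (reduction from partition, giving only weak hardness), and the tractability for $d\le 3$ to \citet{billionnet1985maximizing} and to the paper's own Proposition~\ref{prop:low.degrees}. Your hardness reduction is a clean reconstruction of the Gallo--Simeone route: the gadget $F(x,y_1,y_2)=y_1y_2\,q(x)-K(y_1+y_2)\sum_i x_i-M\sum_{i<j}x_ix_j$ does isolate $q$ in the single mixed partial $\partial^2F/\partial y_1\partial y_2$ while the penalties $K>\max_{x,k}|\partial q/\partial x_k(x)|$ and $M>\max_{k<l}|c_{kl}|$ force every other mixed partial to be nonpositive on the cube, so $F$ is submodular iff $q\le 0$ on $\{0,1\}^n$, and the source problem is co-NP-hard via MAX-CUT. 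Your $d\le 3$ argument is more elementary than the paper's: where Proposition~\ref{prop:low.degrees} goes through Billionnet--Minoux's structural characterization of cubic submodular functions and an explicit $1$-sos certificate (reducing to an LP), you simply maximize each affine $q_{ij}$ over the cube coordinatewise, which is all that is needed for a decision procedure. Two small points to tighten: the maximum of an affine multilinear $q_{ij}$ over $\{0,1\}^{n-2}$ is the \emph{constant term plus} the sum of the positive coefficients, not the sum of the nonnegative coefficients (this does not affect the algorithm); and the padding to $d>4$ by ``multiplying by extra indicator variables'' needs a one-line check that the new mixed partials $\partial^2(zF)/\partial z\,\partial(\cdot)=\partial F/\partial(\cdot)$ remain nonpositive in the yes-case (they do with your choice of $K,M$), or one can instead pad additively by subtracting $\prod_{i=1}^{d}z_i$ in fresh variables, whose mixed partials are manifestly nonpositive and which does not interact with the gadget.
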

Both \citep{gallo1989supermodular} and \citep{crama1989recognition} show co-NP-completeness of $SUBMOD_d$: the former uses a reduction from unconstrained 0-1 quadratic minimization (which makes the problem strongly NP-hard), whereas the latter uses a reduction from partition (which only gives weak NP-hardness). The fact that $SUBMOD_d$ can be solved in polynomial time for $d=1,2,3$ is shown in \citep{billionnet1985maximizing}, and also as a special case of our results (see Section \ref{subsec:gap.sos.submod.submod}).

\subsection{Ideals, varieties, and sum of squares polynomials} \label{subsec:ideals.varieties.sos}

We introduce a couple of concepts from algebraic geometry that are of use in the rest of the paper.

\begin{definition}
The \emph{ideal} generated by polynomials $p_1,\ldots,p_r \in \mathbb{R}[x]$ and denoted by $\langle p_1,\ldots,p_r \rangle$ is the set $I[x]=\{\sum_{i=1}^r h_i p_i~|~h_i \in \mathbb{R}[x]\}$.
\end{definition}
For convenience, the variables that are involved in $p_1,h_1,\ldots,p_r,h_r$ appear explicitly through the notation $I[x]$. We define here one ideal, which appears throughout the paper:
\begin{equation} \label{eq:def.ideals}
I_2[x] \mathrel{\mathop{:}}= \langle x_1^2-x_1,\ldots,x_n^2-x_n \rangle.
\end{equation}

\begin{definition}
The \emph{real variety} of an ideal $I[x]$ is the set $$\mathcal{V}_{\mathbb{R}}(I[x])=\{x \in \mathbb{R}^n~|~p_i(x)=0,\forall i=1,\ldots,r\}.$$
\end{definition}
For example, $\mathcal{V}_{\mathbb{R}}(I_2[x])=\{0,1\}^n$. For two polynomials $p,q \in \mathbb{R}[x]$ and an ideal $I[x]$, if $p-q \in I[x]$, then $p(s)=q(s)$ for all $s \in \mathcal{V}_{\mathbb{R}}(I[x])$. If $p-q \in I[x]$, then $p$ and $q$ are said to be \emph{congruent mod $I[x]$}, written as $p \equiv q \mod{I[x]}.$ 
We are now ready to define the concept of $t$-sos mod $I[x]$ which is central to this paper.
\begin{definition} \label{def:t-sos}
Let $p \in \mathbb{R}[x]$ and $I[x]$ be a given ideal. We say that $p$ is $t$-sum of squares mod $I[x]$ or $t$-sos mod $I[x]$, if $$p \equiv \sum_{r=1}^m q_r^2 \mod{I[x]}$$
where $q_r \in \mathbb{R}[x], \deg(q_r) \leq t, ~\forall r=1,\ldots,m.$ 
\end{definition}
It is quite straightforward to see that if $p$ is $t$-sos mod $I[x]$ for some $t$, then $p(s) \geq 0, \forall s \in \mathcal{V}_{\mathbb{R}}(I).$ Indeed, as it a sum of squares, $\sum_{r} q_r^2(x) \geq 0, \forall x \in \mathbb{R}^n$ and $p(s)=\sum_r q_r^2(s),\forall s \in \mathcal{V}_{\mathbb{R}}(I[x]).$ 

Under certain conditions on $I[x]$, checking whether $p$ is $t$-sos mod $I[x]$ can be done using semidefinite programming \cite[Chapter 3.3.5]{blekherman2012semidefinite}. In the case where $I[x]=I_2[x]$, these conditions are met: the semidefinite program obtained corresponds to searching for a positive semidefinite (psd) matrix $Q$ such that 
$$p(x) \equiv z_t(x)^TQ z_t(x) \mod{I_2[x]},$$
where $z_t(x)=[1,x_1,\ldots,x_n,x_1x_2,\ldots]$ is the standard basis of square-free monomials of degree up to $t$. We make use of other ideals in Section \ref{subsec:equiv.charac.sos.submod}, for which these conditions also hold.

\section{Theory of $t$-sos-submodular polynomials} \label{sec:theory}

In this section, we define $t$-sos submodularity and show some of its theoretical properties.

\subsection{Definition of $t$-sos submodularity}\label{subsec:def.t.sos}

\revision{Proposition \ref{prop:np.hardness} motivates the introduction of an algebraic sufficient condition for submodularity that can be checked efficiently. We introduce such a condition in this section, based on sum-of-squares certificates, which we call $t$-sos submodularity. In principle, any of the equivalent characterizations of submodularity given in Proposition \ref{prop:submod.mle} could serve as the foundation for such a definition by replacing each nonnegativity inequality with its $t$-sos analog. Our definition is based on condition $(iii')$, a choice that is justified in Section \ref{subsec:equiv.charac.sos.submod}, where we show that the sos conditions arising from $(i')$–$(vii')$ are equivalent for appropriate values of $t$, and that the formulation based on $(iii')$ yields the smallest semidefinite programming representation. } 

Recall the ideal $I_2[x]$ defined in \eqref{eq:def.ideals}. For $i=1,\ldots,n$, we introduce the notation $x_{-i}$, for $x \in \mathbb{R}^n$ to mean the vector $x$ from which the $i^{th}$ component has been dropped, resulting in a vector in $\mathbb{R}^{n-1}$. We likewise let $x_{-i,j} \in \mathbb{R}^{n-2}$ for $i,j=1,\ldots,n,~i\neq j$, be the vector $x$ from which the $i^{th}$ and $j^{th}$ components have been removed. In the next definition, we work with $I_2[x_{-i,j}]$, which is generated by the polynomials $x_1^2-x_1,\ldots,x_n^2-x_n$ from which $x_i^2-x_i$ and $x_j^2-x_j$ have been removed.

\begin{definition} \label{def:t-sos-submod}
Let $f:2^{\Omega} \rightarrow \mathbb{R}$ be a set function, let $F$ be its multilinear extension, and let $t \in \mathbb{N}$. We say that $f$ is $t$-sos submodular if 
$-\frac{\partial^2 F(x)}{\partial x_i \partial x_j} \text{ is $t$-sos modulo $I_2[x_{-i,j}]$}, \forall i,j \in \{1,\ldots,n\}, i\neq j$, i.e.,
$$-\frac{\partial^2 F(x)}{\partial x_i \partial x_j} \equiv \sum_{r} q_r^2(x_{-i,j}) \mod{I_2[x_{-i,j}]}, \forall i,j \in \{1,\ldots,n\}, i \neq j,$$
where $q_r \in \mathbb{R}[x_{-i,j}]$ and $\deg(q_r)\leq t.$
\end{definition}
For $i,j\in \{1,\ldots,n\}$, $i\neq j$, the definition involves only variables $x_{-i,j}$ as $x_i$ and $x_j$ do not appear in $\partial^2 F(x)/\partial x_i \partial x_j$ as it is a multilinear function. We now show that being $t$-sos submodular is a sufficient condition for being submodular and can be checked efficiently.
\begin{proposition} \label{prop:sufficiency.sos.submod}
Let $t \in \mathbb{N}.$ Any $t$-sos-submodular set function is submodular.
\end{proposition}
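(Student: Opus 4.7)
The plan is to reduce the statement to condition $(iii')$ of Proposition~\ref{prop:submod.mle}, which characterizes submodularity via the sign of second-order partial derivatives on $\{0,1\}^n$. Concretely, assume $f$ is $t$-sos submodular, and fix indices $i,j \in \{1,\ldots,n\}$ with $i\neq j$. By Definition~\ref{def:t-sos-submod}, there exist polynomials $q_r \in \mathbb{R}[x_{-i,j}]$ with $\deg(q_r)\leq t$ such that
$$-\frac{\partial^2 F(x)}{\partial x_i \partial x_j} \equiv \sum_{r} q_r^2(x_{-i,j}) \mod{I_2[x_{-i,j}]}.$$

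Next I would invoke the general observation already recorded in Section~\ref{subsec:ideals.varieties.sos}: if a polynomial is $t$-sos modulo an ideal $I[x]$, then it is nonnegative on $\mathcal{V}_{\mathbb{R}}(I[x])$. Applying this to the ideal $I_2[x_{-i,j}]$, whose real variety is $\{0,1\}^{n-2}$, I conclude that
$$-\frac{\partial^2 F(x)}{\partial x_i \partial x_j} \geq 0 \qquad \text{for all } x_{-i,j} \in \{0,1\}^{n-2}.$$

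The final step is to upgrade this to nonnegativity on all of $\{0,1\}^n$. Here I would use the fact, recalled in Section~\ref{subsec:set.funcs.extensions} (and formalized in the appendix), that the partial derivative of a multilinear polynomial $F$ with respect to $x_i$ does not depend on $x_i$; applying this twice, $\partial^2 F/\partial x_i \partial x_j$ is free of both $x_i$ and $x_j$. Hence the inequality above extends trivially to all $x \in \{0,1\}^n$. Since $i,j$ were arbitrary, condition $(iii')$ of Proposition~\ref{prop:submod.mle} is satisfied, and therefore $f$ is submodular.

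There is essentially no obstacle here: the argument is a direct chaining of Definition~\ref{def:t-sos-submod}, the nonnegativity property of $t$-sos certificates on the real variety of the associated ideal, and the multilinearity of $F$. The only place one must be careful is in matching the ring of polynomials (in $x_{-i,j}$) with the variety $\{0,1\}^{n-2}$, which is why the definition was set up to mod out only by $x_k^2-x_k$ for $k\notin\{i,j\}$ in the first place.
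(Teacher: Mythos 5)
Your proof is correct and follows essentially the same route as the paper's: invoke the definition, use that a $t$-sos certificate modulo $I_2[x_{-i,j}]$ forces nonnegativity on its variety $\{0,1\}^{n-2}$, and conclude via condition $(iii')$ of Proposition~\ref{prop:submod.mle}. Your explicit remark that $\partial^2 F/\partial x_i \partial x_j$ is free of $x_i$ and $x_j$, so the inequality extends from $\{0,1\}^{n-2}$ to $\{0,1\}^n$, is a small detail the paper leaves implicit, but it is the same argument.
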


\begin{proposition} \label{prop:tractability.sos.submod}
Let $t \in \mathbb{N}.$ Checking whether a set function is $t$-sos-submodular amounts to solving a semidefinite program (SDP) with $\frac{n(n-1)}{2}$ semidefinite constraints, each of size $\sum_{k=0}^t \binom{n-2}{k}.$
\end{proposition}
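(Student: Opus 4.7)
The plan is to reduce the $t$-sos submodular condition to a collection of standard $t$-sos-mod-$I_2$ feasibility conditions, one for each pair $(i,j)$, and then invoke the semidefinite representation of $t$-sos-mod-$I_2[y]$ recalled at the end of Section~\ref{subsec:ideals.varieties.sos}.

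First, I would note that $\partial^2 F / \partial x_i \partial x_j = \partial^2 F / \partial x_j \partial x_i$, so the condition in Definition~\ref{def:t-sos-submod} need only be checked for the $\binom{n}{2}=n(n-1)/2$ unordered pairs $\{i,j\}$, yielding that many semidefinite constraints. Second, because $F$ is multilinear, the polynomial $p_{ij}(x_{-i,j}) := -\partial^2 F/\partial x_i \partial x_j$ depends on exactly $n-2$ variables (the property cited from Appendix~\ref{appendix:background.material}), so checking whether it is $t$-sos mod $I_2[x_{-i,j}]$ is a standard SOS-modulo-$I_2$ question on $n-2$ variables.

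Next, I would apply the representation theorem: $p_{ij}$ is $t$-sos mod $I_2[x_{-i,j}]$ iff there exists a positive semidefinite matrix $Q_{ij}$ such that
\[
p_{ij}(x_{-i,j}) \equiv z_t(x_{-i,j})^\top Q_{ij}\, z_t(x_{-i,j}) \mod I_2[x_{-i,j}],
\]
where $z_t(x_{-i,j})$ is the vector of square-free monomials in $x_{-i,j}$ of degree at most $t$. Since there are $\binom{n-2}{k}$ square-free monomials of degree exactly $k$ in $n-2$ variables, the length of $z_t(x_{-i,j})$, and hence the size of $Q_{ij}$, is $\sum_{k=0}^t \binom{n-2}{k}$, matching the claimed size.

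Finally, I would explain that the congruence above, after reducing each $x_k^2$ in the right-hand side to $x_k$ (using the generators of $I_2[x_{-i,j}]$), becomes an equality between two multilinear polynomials in $x_{-i,j}$; equating coefficients of square-free monomials produces a system of linear equalities in the entries of $Q_{ij}$. Collecting over all $n(n-1)/2$ pairs gives a single feasibility problem with $n(n-1)/2$ PSD constraints of the stated size together with finitely many linear equalities, i.e., a semidefinite program of the claimed form. There is no genuine obstacle; the only care needed is the bookkeeping to confirm that the reduction modulo $I_2[x_{-i,j}]$ preserves linearity in the entries of $Q_{ij}$ and that the count of square-free monomials is exactly $\sum_{k=0}^t\binom{n-2}{k}$.
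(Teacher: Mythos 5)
Your proposal is correct and follows essentially the same route as the paper: both reduce each pair $(i,j)$ to the standard semidefinite representation of $t$-sos mod $I_2[x_{-i,j}]$ from \citep[Section 3.3.5]{blekherman2012semidefinite} and count the $\sum_{k=0}^t \binom{n-2}{k}$ square-free monomials in $n-2$ variables to get the matrix size. Your additional remarks on the symmetry of the mixed partials and on why the coefficient-matching constraints are linear in $Q_{ij}$ are correct bookkeeping that the paper leaves implicit.
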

\revision{Proofs of Propositions~\ref{prop:sufficiency.sos.submod} and 
\ref{prop:tractability.sos.submod} are provided in Appendix~\ref{appendix:def.t.sos}. Testing whether a given function $f$ is $t$-sos for some $t$ can be easily done using JuMP \citep{dunning2017jump} and the SumOfSquares.jl library \citep{legat2020sumofsquares} in Julia as we see now.

\begin{example} \label{ex:test.t.sos}
Consider a degree-$5$ set function $f$ defined on a ground set of size $n=5$, whose multilinear extension $F$ is given by
$$
F(x_1,x_2,x_3,x_4,x_5)=3x_1x_2x_3x_4x_5-4x_3x_1x_2-9x_4x_1x_2-12x_1x_2x_4x_5-4x_1x_2x_3x_5-4x_1x_2x_3x_4+2.
$$
Checking whether $F$ is $t$-sos-submodular for a given $t$ amounts to checking whether $-\frac{\partial^2 F(x)}{\partial x_i x_j}$ is $t$-sos modulo $I_2[x_{-i,j}]$ for all $i,j=1,\ldots,n, i\neq j.$ An implementation of this test for $t=2$ and $t=3$ is given \href{https://mybinder.org/v2/gh/SOSSubmodularity/SOSSubmodularity_Revision/5c878bf0f0c92b63d781a20d07aabe9e26fcf9d1?filepath=ExampleEC1/ExampleEC1.ipynb}{here}. It can be run from a browser via the open-source solver SCS \citep{odowd2021scs}. We find that $F$ is not $2$-sos-submodular (the corresponding problem is infeasible) but it is $3$-sos-submodular, and thus submodular.
\end{example}
Note that the SumOfSquares.jl library converts the sum of squares program described above into an SDP under the hood and then solves this SDP. It is also possible to derive this SDP explicitly. We refer the reader to the proof of Proposition~\ref{prop:tractability.sos.submod} for the general SDP formulation of the problem of testing $t$-sos submodularity of a set function and to Example~\ref{example:SDP} for examples of the SDPs obtained when carrying out the tests from Example~\ref{ex:test.t.sos}.
}

As can be seen from Definition \ref{def:t-sos-submod}, being able to check whether a set function $f$ is $t$-sos submodular assumes that the MLE $F$ of $f$ is given. Further note that for $t,t' \in \mathbb{N}$ such that $t \leq t'$, the set of $t$-sos-submodular functions is included in the set of $t'$-sos-submodular functions. This follows immediately from the fact that any $t$-sos polynomial mod $I[x]$ is a $t'$-sos-polynomial mod $I[x]$ when $t \leq t'$. The next result specifies the range of $t$ to consider for $t$-sos submodularity. 

\begin{proposition} \label{prop:bounds.on.t}
Let $t \in \mathbb{N}$ and let $f:2^\Omega \rightarrow \mathbb{R}$ be a set function of degree $d$. If $t < \lceil \frac{d-2}{2} \rceil$, then $f$ cannot be $t$-sos submodular. When $t=\lceil \frac{n+d-5}{2} \rceil$, the set of $t$-sos-submodular functions is exactly the set of submodular functions of degree $d$.
\end{proposition}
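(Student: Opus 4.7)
The plan is to prove the two claims separately. For the lower bound, I would use a degree count modulo $I_2$. Since $f$ has degree exactly $d$, the multilinear extension $F$ contains a monomial of degree $d$, say $c\prod_{\ell=1}^{d}x_{i_\ell}$ with $c\neq 0$; differentiating in $x_{i_1}$ and $x_{i_2}$ then produces a nonzero multilinear polynomial in $x_{-i_1,i_2}$ of degree exactly $d-2$. Now if $-\partial^2 F/\partial x_{i_1}\partial x_{i_2}\equiv \sum_r q_r^2 \mod{I_2[x_{-i_1,i_2}]}$ with $\deg(q_r)\leq t$, then $\sum_r q_r^2$ has degree at most $2t$, and reduction modulo $I_2$ (which substitutes $x_k^2\mapsto x_k$) never raises degree. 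Since congruence modulo $I_2$ means equality of multilinear reductions, matching degrees forces $2t\geq d-2$, hence $t\geq \lceil (d-2)/2\rceil$; this gives the impossibility whenever $t<\lceil (d-2)/2\rceil$.

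For the sufficiency claim at $t=\lceil (n+d-5)/2\rceil$, I would invoke characterization $(iii')$ of Proposition~\ref{prop:submod.mle}: $f$ is submodular iff, for every pair $i\neq j$, the polynomial $g_{ij}(x_{-i,j}):=-\partial^2 F/\partial x_i\partial x_j$ is nonnegative on $\{0,1\}^{n-2}$. Since $g_{ij}$ is multilinear in $N:=n-2$ variables and has degree at most $k:=d-2$, the statement reduces to the following general assertion: any multilinear polynomial $g\in\mathbb{R}[y_1,\ldots,y_N]$ of degree $\leq k$ that is nonnegative on $\{0,1\}^N$ is sos modulo $I_2[y]$ of degree $\leq \lceil (N+k-1)/2\rceil$. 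Applied to $g_{ij}$, this yields exactly $t=\lceil((n-2)+(d-2)-1)/2\rceil=\lceil (n+d-5)/2\rceil$, and combined with Proposition~\ref{prop:sufficiency.sos.submod} gives equality of the two function classes.

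The hard part will be proving the general sos bound. A baseline argument is Lagrange interpolation: using $\ell_y(x)=\prod_{y_i=1}x_i\prod_{y_i=0}(1-x_i)$ of degree $N$, together with the identity $\ell_y^2\equiv \ell_y\mod{I_2}$, one obtains
\[
g\equiv \sum_{y\in\{0,1\}^N}g(y)\,\ell_y^2=\sum_y\bigl(\sqrt{g(y)}\,\ell_y\bigr)^2\mod{I_2},
\]
which is sos of degree $N$. This matches the target only when $k=N$ (i.e., $d=n$); otherwise it overshoots. Refining to $\lceil (N+k-1)/2\rceil$ must exploit that $g$ has monomials only of degree $\leq k$, so only $\binom{N}{\leq k}$ Möbius coefficients. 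The natural route is to set up the sos sdp feasibility program from Proposition~\ref{prop:tractability.sos.submod} and argue, via strong duality with the moment hierarchy over $\{0,1\}^N$ restricted to the degree-$\leq k$ subspace, that a feasible psd Gram matrix of size $\binom{N}{\leq t}$ exists at level $t=\lceil (N+k-1)/2\rceil$; equivalently, one would show that any pseudo-moment sequence of order $2t$ on $\{0,1\}^N$ that is consistent with pairing nonnegatively against $g$ has a representing atomic measure, which forces sos-feasibility. This is closely tied to the Lasserre hierarchy's convergence behavior for degree-bounded 0-1 polynomial optimization and is the technically delicate step of the argument.
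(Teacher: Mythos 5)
Your overall structure matches the paper's proof. The lower bound is the same degree-counting argument (yours is in fact spelled out more carefully: you justify that some $-\partial^2 F/\partial x_{i_1}\partial x_{i_2}$ has degree \emph{exactly} $d-2$ and that reduction modulo $I_2$ cannot raise degree, so equality of normal forms forces $2t\geq d-2$). For the upper bound, you reduce to exactly the right general statement --- any multilinear polynomial of degree $\leq k$ in $N$ variables that is nonnegative on $\{0,1\}^N$ is $\lceil (N+k-1)/2\rceil$-sos modulo $I_2$ --- and your bookkeeping $N=n-2$, $k=d-2$ recovers $\lceil (n+d-5)/2\rceil$ correctly, with Proposition~\ref{prop:sufficiency.sos.submod} supplying the reverse inclusion.

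The one place your write-up falls short of a proof is precisely that general sos degree bound. The paper does not prove it either: it invokes it as the main result of \citet{sakaue2016exact}. Your Lagrange-interpolation baseline is correct but only yields degree $N=n-2$, which matches the target solely when $d=n$; the refinement to $\lceil (N+k-1)/2\rceil$ is stated as ``the technically delicate step'' and only sketched via moment--sos duality and exactness of a truncated Lasserre level over $\{0,1\}^N$. That sketch is indeed the right flavor of argument (it is essentially how the cited exactness result is established), but as written it is a plan, not a proof: you neither construct the Gram matrix nor carry out the flat-extension/representing-measure argument for pseudo-moment sequences of order $2t$. So if the intent is a self-contained proof, there is a genuine gap at this step; if citing the known exactness theorem is acceptable, your argument coincides with the paper's.
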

A proof of this result is in Appendix \ref{appendix:def.t.sos}. Thus, in practice, if one wishes to show that a set function $f:2^{\Omega}\rightarrow \mathbb{R}$ of degree $d$ is $t$-sos submodular, one should only consider $ \lceil \frac{d-2}{2} \rceil \leq t \leq \lceil \frac{n+d-5}{2} \rceil$. 

\begin{remark} \label{remark:convexity.submodularity}
As mentioned in the introduction, submodularity has many interesting connections to convexity. In a parallel to Proposition \ref{prop:submod.mle}, convexity also has zero, first and second-order characterizations. Furthermore, as touched upon in the introduction, testing whether a quartic polynomial is convex is an NP-hard problem \citep{ahmadi2013np}, in an echo of Proposition \ref{prop:np.hardness}.  As a consequence, a sufficient, but tractable, condition for convexity based on sum of squares has been introduced in the literature: \emph{sos convexity} \citep{helton2010semidefinite}. Like $t$-sos submodularity, it is based on the second-order characterization of convexity, for similar reasons to the ones we discuss next.
\end{remark}

\begin{remark}
\revision{Proposition \ref{prop:tractability.sos.submod} states that checking whether a set function over a ground set of size $n$ is $t$-sos-submodular amounts to solving a semidefinite program  with order $O(n^2)$ semidefinite constraints, each one of size roughly $(n-2)^t$. To help the reader understand for which values of $(n,t)$ such a problem can be tackled with modern-day solvers, we provide, in Table \ref{tab:solving.times}, the time it takes to solve, for varying $n$ and $t$, the following problem:
\begin{equation} \label{eq:test.times}
\begin{aligned}
&\min_{F \in \mathbb{R}[x_1,\ldots,x_n], ~\deg(F)\leq d, ~\text{F multilinear}} c^T \hat{F}\\
&\text{s.t. } F \text{ is $t$-sos-submodular}, ||\hat{F}||_2^2 \leq 1,
\end{aligned}
\end{equation}
where $\hat{F}$ is a vector containing the coefficients of $F$ and $c$ is a random vector generated following a standard normal distribution. This problem is bounded due to the $2$-norm constraint on the coefficients of $F$ and always has a feasible solution (e.g., $F=0$): it is thus well-defined. Furthermore, its solving time reflects the time it takes to optimize over or test membership to the set of $t$-sos-submodular functions for fixed $n$. We use the solver SCS \citep{odowd2021scs} on an M4 MacBook Pro with 16GB of RAM to solve these problems. Note that these solving times can be further improved by leveraging, e.g., sparsity or symmetry of the underlying SDP formulation; see \citep{majumdar2020recent}.}


\begin{table}[h]
    \centering
    \begin{tabular}{|c|ccc|ccc|cc|}\hline
        $n$ & \multicolumn{3}{c|}{10} & \multicolumn{3}{c|}{15} & \multicolumn{2}{c|}{20} \\ \hline
        $t$ & 1 & 2 & 3 & 1 & 2 & 3 & 1 & 2 \\ \hline
        Runtime (s) & 0.12 & 7.08 & 471 & 0.81 & 90.2 & 32900 & 2.77 & 534 \\ \hline
    \end{tabular}
    \caption{Solving times for \eqref{eq:test.times} for varying values of $n$ and $t$. }
    \label{tab:solving.times}
\end{table}
    
\end{remark}

\subsection{Equivalent characterizations of $t$-sos submodularity} \label{subsec:equiv.charac.sos.submod}

\revision{In Section \ref{subsec:def.t.sos}, we define $t$-sos submodularity using the second-order characterization of submodularity as a starting point. As mentioned there, we could instead have based the definition on any of the characterizations from Proposition~\ref{prop:submod.mle}, which would have led to different sos-based sufficient conditions for submodularity. In this section, we argue that the definition of $t$-sos submodularity, as given in Definition~\ref{def:t-sos-submod}, is the most appropriate one to work with. To argue this, we show that all of the sos-based sufficient conditions arising from Proposition~\ref{prop:submod.mle} are in fact equivalent for appropriate values of $t$; see Theorems \ref{thm:equiv.sos.submod.ii}-\ref{thm:equiv.sos.submod.vii} and Table \ref{tab:summary.equiv.charac} for a summary. We then show that the second-order formulation yields the smallest semidefinite programming representation (Proposition \ref{prop:size}), thereby justifying its use. This section aims to serve a broader purpose, however, than simply justifying the definition of $t$-sos submodularity. In the submodularity literature, the different characterizations in Definition~\ref{def:f.submodular} often prove useful in distinct contexts, and a similar phenomenon arises in the sos setting. For example, in Section~\ref{subsec:approx.submod}, a corner case related to Corollary~\ref{cor:t.sos.approx.submod} is more naturally understood via Theorem~\ref{thm:equiv.sos.submod.v}, which characterizes $t$-sos submodularity through Proposition~\ref{prop:submod.mle} $(v')$, than directly through Definition~\ref{def:t-sos-submod}. Providing the full set of equivalent algebraic characterizations of $t$-sos submodularity therefore ensures that future work can draw upon whichever formulation is most appropriate for a given application.

}

We now proceed with the statement of the results in this section. This requires the introduction of two more ideals, on top of $I_2[x]$ given in \eqref{eq:def.ideals}. These rely on additional variables, the multivariate vectors $y\mathrel{\mathop{:}}=(y_1,\ldots,y_n)$ and $s\mathrel{\mathop{:}}=(s_1,\ldots,s_n).$ We define:
\begin{equation}
\begin{aligned} \label{eq:def.ideals.new}
I_0[x,y,s]&\mathrel{\mathop{:}}=\langle \{x_i^2-x_i,~ x_iy_i-s_i,~x_is_i-s_i, ~y_i^2-y_i,~y_is_i-s_i,s_i^2-s_i \}_{i=1,\ldots,n} \rangle \\
I_1[x,y] &\mathrel{\mathop{:}}=\langle \{x_i^2-x_i, ~x_iy_i-x_i,~ y_i^2-y_i\}_{i=1,\ldots,n}\rangle.
\end{aligned}
\end{equation}
One can easily show that:
$$ \mathcal{V}_{\mathbb{R}}(I_0[x,y,s])=\{(x,y,x \circ y) \in \{0,1\}^{3n} \} \text{ and } \mathcal{V}_{\mathbb{R}}(I_1[x,y])=\{(x,y)\in \{0,1\}^{2n}~|~x \leq y\}.$$
We are now ready to state the main results of this section.

\begin{theorem} \label{thm:equiv.sos.submod.ii}
Let $f:2^{\Omega} \rightarrow \mathbb{R}$ be a set function and let $F$ be its MLE. Then, $f$ is $t$-sos submodular if and only if $\frac{\partial F(x)}{\partial x_i}-\frac{\partial F(y)}{\partial x_i}$ is $(t+1)$-sos modulo $I_1[x_{-i},y_{-i}],~\forall i\in \{1,\ldots,n\}$.
\end{theorem}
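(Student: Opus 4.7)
The plan is to prove both directions of the equivalence by combining a discrete telescoping identity for the multilinear polynomial $h(x_{-i}) \coloneqq \partial F(x)/\partial x_i$, the idempotency identity
$$(y_j - x_j)^2 \equiv (y_j - x_j) \pmod{I_1[x_{-i},y_{-i}]},$$
which follows directly from $x_j^2 \equiv x_j$, $y_j^2 \equiv y_j$, and $x_j y_j \equiv x_j$ modulo $I_1$, and a basis decomposition in a three-dimensional quotient ring.

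For direction $(\Rightarrow)$, assume $-\partial^2 F/\partial x_i\partial x_j$ is $t$-sos modulo $I_2[x_{-i,j}]$ for every $j \neq i$. Since $h$ is affine in each coordinate, one obtains the polynomial identity
$$h(x_{-i}) - h(y_{-i}) = \sum_{j \neq i} (y_j - x_j)\left(-\frac{\partial^2 F}{\partial x_i\partial x_j}\right)\bigl(w^{(j)}\bigr),$$
where $w^{(j)} \in \mathbb{R}^{n-2}$ is the mixed point whose $k$-th coordinate (for $k \in \{1,\ldots,n\}\setminus\{i\}$) equals $x_k$ when $k<j$ and $y_k$ when $k>j$. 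Substituting $w^{(j)}$ into the $t$-sos representation of $-\partial^2 F/\partial x_i\partial x_j$ sends each generator $z_k^2 - z_k$ of $I_2[x_{-i,j}]$ to $x_k^2 - x_k$ or $y_k^2 - y_k$, both of which lie in $I_1[x_{-i},y_{-i}]$. Thus $(-\partial^2 F/\partial x_i\partial x_j)(w^{(j)})$ is sos modulo $I_1[x_{-i},y_{-i}]$ with squares of degree at most $t$. Multiplying by $(y_j - x_j) \equiv (y_j - x_j)^2$ via idempotency and summing yields
$$h(x_{-i}) - h(y_{-i}) \equiv \sum_{j,r}\bigl((y_j - x_j)\, q_{j,r}(w^{(j)})\bigr)^2 \pmod{I_1[x_{-i},y_{-i}]},$$
each square of degree at most $t+1$.

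For direction $(\Leftarrow)$, suppose $h(x_{-i}) - h(y_{-i}) \equiv \sum_r s_r(x_{-i},y_{-i})^2 \pmod{I_1[x_{-i},y_{-i}]}$ with $\deg s_r \leq t+1$. Fix $j\neq i$ and apply the substitution $x_k, y_k \mapsto z_k$ for $k \neq i,j$, keeping $x_j, y_j$ free. Because $h$ is affine in its $j$-th slot, the left-hand side collapses to $(y_j - x_j)\bigl(-\partial^2 F/\partial x_i\partial x_j\bigr)(z)$, while $I_1[x_{-i},y_{-i}]$ descends to $J \coloneqq I_2[z] + \langle x_j^2 - x_j,\, y_j^2 - y_j,\, x_j y_j - x_j\rangle$. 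The subideal $\langle x_j^2 - x_j, y_j^2 - y_j, x_j y_j - x_j\rangle$ has variety $\{(0,0),(0,1),(1,1)\}$, so $\mathbb{R}[x_j,y_j]$ modulo it has basis $\{1, x_j, y_j\}$; each image $\tilde s_r$ therefore admits a unique representative $A_r(z) + B_r(z) x_j + C_r(z) y_j$ modulo $J$, and a monomial count gives $\deg A_r \leq t+1$ and $\deg B_r, \deg C_r \leq t$. Expanding $\tilde s_r^2$ modulo $J$, summing, and matching coefficients of $\{1, x_j, y_j\}$ against those of $(y_j - x_j)\bigl(-\partial^2 F/\partial x_i\partial x_j\bigr)(z)$ produces three congruences modulo $I_2[z]$. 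The constant-term congruence $\sum_r A_r^2 \equiv 0 \pmod{I_2[z]}$ forces each $A_r \equiv 0 \pmod{I_2[z]}$ because $I_2[z]$ is real radical (its variety $\{0,1\}^{n-2}$ consists of real points); the $y_j$-coefficient congruence then collapses to $\sum_r C_r(z)^2 \equiv -\partial^2 F/\partial x_i\partial x_j(z) \pmod{I_2[z]}$, the required $t$-sos certificate.

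The main obstacle is the degree bookkeeping in direction $(\Leftarrow)$: a naive substitution only yields a $(t+1)$-sos certificate for the Hessian entry, one degree too many. The saving comes from decomposing in the basis $\{1, x_j, y_j\}$ and eliminating $A_r$ via the constant-term congruence, so that only $\sum_r C_r^2$ remains on the $y_j$-coefficient side, and each $C_r$ absorbs one explicit factor of $y_j$ from $\tilde s_r$ and therefore has degree at most $t$.
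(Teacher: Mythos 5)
Your proof is correct. The forward direction is essentially identical to the paper's: both establish the telescoping identity $\frac{\partial F}{\partial x_i}(x)-\frac{\partial F}{\partial x_i}(y)=\sum_{j\neq i}(y_j-x_j)\bigl(-\frac{\partial^2 F}{\partial x_i\partial x_j}\bigr)(w^{(j)})$ at mixed points, push the $t$-sos certificates through the variable renaming, and absorb $(y_j-x_j)$ into the squares via the idempotency $(y_j-x_j)^2\equiv y_j-x_j \bmod I_1$. The converse, however, takes a genuinely different route. The paper first deduces $q_r(x,x)\equiv 0 \bmod I_1[x_{-i}]$ from the vanishing of the diagonal difference, then expands $q_r(x_{-i},[x]_{-i}^{j=y_j})$ via the four-corner multilinear interpolation formula to extract an explicit factor $(y_j-x_j)$, obtaining $q_r(x_{-i},[x]_{-i}^{j=y_j})\equiv (y_j-x_j)\,q_r([x]_{-i}^{j=0},[x]_{-i}^{j=1})$ and hence the degree drop to $t$ before specializing $x_j=0$, $y_j=1$. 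You instead identify $x_k$ with $y_k$ for $k\neq i,j$, decompose each square's image in the basis $\{1,x_j,y_j\}$ of the three-point quotient as $A_r+B_rx_j+C_ry_j$, and match coefficients: the constant-term congruence $\sum_r A_r^2\equiv 0$ together with real-radicality of $I_2[z]$ kills the $A_r$, and the $y_j$-coefficient then hands you $\sum_r C_r^2\equiv -\frac{\partial^2 F}{\partial x_i\partial x_j}$ with $\deg C_r\leq t$. Both arguments hinge on the same two facts (reality of the variety forcing certain squares to vanish identically in the quotient, and degree bookkeeping in a normal-form basis), but yours packages them as linear algebra in the slice quotient ring, which is arguably more systematic and makes the degree drop transparent, whereas the paper's is a more hands-on manipulation of the interpolation identity. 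Both are complete and correct.
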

This result is an algebraic counterpart of the equivalence between $(ii')$ and $(iii')$ in Proposition~\ref{prop:submod.mle}.

\begin{theorem} \label{thm:equiv.sos.submod.i}
Let $f:2^{\Omega} \rightarrow \mathbb{R}$ be a set function and let $F$ be its MLE. Then, $f$ is $t$-sos submodular if and only if $G(x,y,s) \mathrel{\mathop{:}}=F(x)+F(y)-F(x+y-s)-F(s)$ is $(t+2)$-sos modulo $I_0[x,y,s]$.
\end{theorem}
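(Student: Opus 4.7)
The approach is to prove both directions through a single algebraic identity derived from iterated multilinear telescoping on $F$. The master identity I would establish is
\[
G(x,y,s)\;\equiv\;\sum_{\substack{i,j=1\\i\neq j}}^{n}(x_i-s_i)(y_j-s_j)\cdot\Bigl[-\tfrac{\partial^{2}F}{\partial x_i\,\partial x_j}\bigl(\pi^{(i,j)}(x,y,s)\bigr)\Bigr]\pmod{I_0[x,y,s]},
\]
where $\pi^{(i,j)}(x,y,s)$ is a coordinate-wise substitution (depending on a fixed total order of $\{1,\ldots,n\}$) whose entries are linear polynomials in $(x,y,s)$, each taking values in $\{0,1\}$ on $\mathcal{V}_{\mathbb{R}}(I_0)$. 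Since $\partial F/\partial x_k$ does not depend on $x_k$, one may telescope coordinate-by-coordinate to write $F(x)-F(s)=\sum_k(x_k-s_k)\,\partial F/\partial x_k$ at a mixed point whose $l$-th entry is $x_l$ for $l<k$ and $s_l$ for $l>k$, and analogously $F(x+y-s)-F(y)=\sum_k(x_k-s_k)\,\partial F/\partial x_k$ at the mixed point built from $(x+y-s)_{<k}$ and $y_{>k}$. Subtracting, the $(x_k-s_k)$-coefficient becomes a difference of $\partial F/\partial x_k$ at two points whose $l$-th entries differ by exactly $y_l-s_l$ for every $l\neq k$; a second telescoping in $l$ then produces the claimed double sum.

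The forward direction combines three observations. First, since each component of $\pi^{(i,j)}$ is $\{0,1\}$-valued on $\mathcal{V}_{\mathbb{R}}(I_0)$ and $I_0$ is radical (Seidenberg's lemma, using that $I_0$ contains $x_i^{2}-x_i$, $y_i^{2}-y_i$, $s_i^{2}-s_i$), any $t$-sos certificate of $-\partial^{2}F/\partial x_i\partial x_j$ modulo $I_2[x_{-i,j}]$ pulls back under $\pi^{(i,j)}$ to a $t$-sos certificate modulo $I_0$. Second, a direct calculation using the generators of $I_0$ gives $(x_i-s_i)^{2}\equiv x_i-s_i$ and $(y_j-s_j)^{2}\equiv y_j-s_j$ modulo $I_0$, whence $[(x_i-s_i)(y_j-s_j)]^{2}\equiv(x_i-s_i)(y_j-s_j)$; that is, the degree-$2$ selector is itself a square modulo $I_0$. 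Third, a product of two sums of squares is itself a sum of squares with degrees adding. Applying these to each term of the master identity produces the claimed $(t+2)$-sos representation of $G$.

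For the reverse direction, write $G\equiv\sum_r q_r^{2}\pmod{I_0}$ with $\deg q_r\leq t+2$. Fix $i\neq j$ and consider the parametric point $(x^{*},y^{*},s^{*})(u)$ with $(x^{*}_i,y^{*}_i,s^{*}_i)=(1,0,0)$, $(x^{*}_j,y^{*}_j,s^{*}_j)=(0,1,0)$, and $x^{*}_k=y^{*}_k=s^{*}_k=u_k$ for $k\notin\{i,j\}$. One verifies that this point lies in $\mathcal{V}_{\mathbb{R}}(I_0)$ for every $u\in\{0,1\}^{n-2}$, and a direct computation using multilinearity of $F$ yields $G(x^{*},y^{*},s^{*})=-\partial^{2}F/\partial x_i\partial x_j(u)$. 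Specializing the sos representation gives $-\partial^{2}F/\partial x_i\partial x_j(u)\equiv\sum_r\tilde q_r(u)^{2}\pmod{I_2[x_{-i,j}]}$ with $\tilde q_r(u):=q_r(x^{*},y^{*},s^{*})$. The main obstacle is the degree gap: naive substitution yields only $\deg\tilde q_r\leq t+2$, whereas $t$-sos submodularity requires $\deg\leq t$. To close this gap by $2$, I would exploit that $G$ vanishes on $\mathcal{V}_{\mathbb{R}}(I_0)\cap\{s=x\}$ (i.e.\ $x\leq y$) and on $\mathcal{V}_{\mathbb{R}}(I_0)\cap\{s=y\}$, so each $q_r$ must vanish on both sub-varieties; radicality of $I_0$ places $q_r$ in $(I_0+\langle x-s\rangle)\cap(I_0+\langle y-s\rangle)$, and working in $\mathbb{R}[x,y,s]/I_0\cong\mathbb{R}[x,y]/I_2[x,y]$ one shows these two ideals intersect in the ideal generated by the products $(x_l-s_l)(y_m-s_m)$. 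A degree-controlled version of this decomposition, $q_r\equiv\sum_{l,m}(x_l-s_l)(y_m-s_m)\,\alpha_{r,l,m}\pmod{I_0}$ with $\deg\alpha_{r,l,m}\leq t$, is the technical crux; granting it, the identity $(x^{*}_l-s^{*}_l)(y^{*}_m-s^{*}_m)=\delta_{li}\delta_{mj}$ annihilates every summand except $(l,m)=(i,j)$, leaving $\tilde q_r(u)=\alpha_{r,i,j}(x^{*},y^{*},s^{*})$ of degree $\leq t$, the desired certificate.
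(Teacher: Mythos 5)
Your forward direction is sound and takes a genuinely different (and arguably more direct) route than the paper: you compose the two telescoping steps into a single ``master identity'' expressing $G$ as a double sum of selectors $(x_i-s_i)(y_j-s_j)$ times substituted second derivatives, then use idempotence of the selectors mod $I_0[x,y,s]$ and pullback of the $t$-sos certificates under the affine substitutions. The paper instead routes both directions through the first-order characterization (Theorem~\ref{thm:equiv.sos.submod.ii}): it writes $G(x,y,s)=\sum_i (y_i-s_i)\bigl(\frac{\partial F}{\partial x_i}([sy]^i)-\frac{\partial F}{\partial x_i}([x(x+y-s)]^i)\bigr)$ and invokes the already-proved $(t+1)$-sos property of first-derivative differences, so each step only has to gain or lose one degree at a time. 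Your single-identity version of the forward direction is correct and self-contained, provided you actually verify that each component of $\pi^{(i,j)}$ is idempotent mod $I_0$ (which it is: $x_k$, $y_k$, $s_k$, and $x_k+y_k-s_k$ all satisfy $u^2\equiv u$), so the generators of $I_2[x_{-i,j}]$ map into $I_0$ and radicality is not even needed there.

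The reverse direction, however, has a genuine gap, and you have flagged it yourself: the ``degree-controlled decomposition'' $q_r\equiv\sum_{l,m}(x_l-s_l)(y_m-s_m)\,\alpha_{r,l,m}\pmod{I_0}$ with $\deg\alpha_{r,l,m}\leq t$ is asserted, not proved, and it is exactly the step that closes the degree gap from $t+2$ to $t$. The ideal-theoretic part of your claim (that vanishing on both $\{s=x\}$ and $\{s=y\}$ places $q_r$ in the ideal generated by the products $(x_l-s_l)(y_m-s_m)$ modulo $I_0$) is true but carries no degree information: the generic certificate of membership, e.g.\ via indicator functions on the variety, produces cofactors $\alpha_{r,l,m}$ of full degree, and degree bounds in quotient-ring representations are precisely the delicate point in this entire circle of results. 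Without the bound $\deg\alpha_{r,l,m}\leq t$ your specialization at $(x^*,y^*,s^*)$ only yields a $(t+2)$-sos certificate for $-\partial^2F/\partial x_i\partial x_j$, which does not establish $t$-sos submodularity. The paper sidesteps this by extracting only one factor $(x_i-s_i)$ at a time, via an explicit multilinear expansion of the normal form of $q_r$ in the single coordinate $i$ combined with the vanishing $q_r(x,y,x)\equiv 0$; a two-coordinate version of that expansion (in the style of the paper's proof of Theorem~\ref{thm:equiv.sos.submod.v}, which expands $q_r$ in coordinates $i$ and $j$ simultaneously and uses two vanishing conditions to kill all but one term) is what you would need to write out to substantiate your crux. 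As it stands, the reverse direction is incomplete.
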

This result is an algebraic counterpart of the equivalence between $(i')$ and $(iii')$ in Proposition~\ref{prop:submod.mle}.

\begin{theorem} \label{thm:equiv.sos.submod.v}
Let $f:2^{\Omega} \rightarrow \mathbb{R}$ be a set function and let $F$ be its MLE. Then, $f$ is $t$-sos submodular if and only if $H_1(x,y)\mathrel{\mathop{:}}=F(x)-F(y)+\sum_{i=1}^n (y_i-x_i) \cdot \frac{\partial F(x)}{\partial x_i}$ is $(t+2)$-sos modulo $I_1[x,y].$
\end{theorem}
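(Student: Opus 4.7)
The plan is to prove both implications by explicit algebraic manipulation of $H_1$. The forward direction ($t$-sos-submodularity implies $H_1$ is $(t+2)$-sos modulo $I_1[x,y]$) rests on writing $H_1$ as a polynomial identity involving the second derivatives $g_{ik} \mathrel{\mathop{:}}= \partial^2 F/\partial x_i \partial x_k$, into which I substitute their $t$-sos certificates. The reverse direction specializes $H_1$ via a carefully chosen substitution to isolate a single $-g_{ij}$, and then uses an algebraic ``division by $y_i y_j$'' lemma to extract a sos certificate of the correct degree.

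For the forward direction, I fix the identity ordering on $[n]$ and introduce the intermediate states $z^{(i-1)} \mathrel{\mathop{:}}= (y_1, \ldots, y_{i-1}, x_i, \ldots, x_n)$. Since $F$ is multilinear, $F(z^{(i)}) - F(z^{(i-1)}) = (y_i - x_i)\, \partial F/\partial x_i(z^{(i-1)})$ is a polynomial identity; summing gives $F(y) - F(x) = \sum_i (y_i - x_i)\, \partial F/\partial x_i(z^{(i-1)})$. Applying the same telescoping one level deeper to $\partial F/\partial x_i(z^{(i-1)}) - \partial F/\partial x_i(x)$ and substituting into $H_1$ yields the polynomial identity
\[
H_1(x,y) = \sum_{1 \leq k < i \leq n}(y_i - x_i)(y_k - x_k)\bigl(-g_{ik}(z^{(k-1)}_{-i,k})\bigr).
\]
The $t$-sos representation of each $-g_{ik}$ modulo $I_2[x_{-i,k}]$ pulls back along the substitution $x_l \mapsto z^{(k-1)}_l$ into a $t$-sos representation modulo $I_1[x,y]$, since the generators $x_l^2 - x_l$ map to $y_l^2 - y_l$ or $x_l^2 - x_l$, both in $I_1[x,y]$. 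The identity $(y_l - x_l)^2 \equiv y_l - x_l \mod I_1[x,y]$ then lets me absorb the factor $(y_i - x_i)(y_k - x_k)$ into a single square, adding exactly two to the degree and producing the desired $(t+2)$-sos certificate.

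For the backward direction, suppose $H_1 = \sum_r q_r^2 + I$ with $\deg q_r \leq t+2$ and $I \in I_1[x,y]$. Fix $i \neq j$ and apply the substitution $\phi\colon y_k \mapsto x_k$ for $k \notin \{i,j\}$ together with $x_i \mapsto 0,\ x_j \mapsto 0$. A direct expansion (using that $g_{ij}$ does not depend on $x_i, x_j$) shows $H_1 \circ \phi = -g_{ij}(x_{-i,j})\, y_i y_j$, and a check of the generators confirms $I_1[x,y]$ maps into $J \mathrel{\mathop{:}}= I_2[x_{-i,j}] + \langle y_i^2 - y_i,\, y_j^2 - y_j\rangle$. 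This yields the identity $-g_{ij}\, y_i y_j \equiv \sum_r \tilde q_r(x_{-i,j}, y_i, y_j)^2 \mod J$, with $\deg \tilde q_r \leq t + 2$.

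The main obstacle is the degree mismatch: used naively, this certificate only shows $-g_{ij}$ to be $(t+2)$-sos modulo $I_2[x_{-i,j}]$. My plan is to bridge the gap via a ``division'' lemma. Reducing each $\tilde q_r$ using $y_i^2 \equiv y_i,\ y_j^2 \equiv y_j$ to the canonical form $\tilde q_r = A_r + B_r y_i + C_r y_j + D_r y_i y_j$ (with $A_r, B_r, C_r, D_r \in \mathbb{R}[x_{-i,j}]$), a routine degree count gives $\deg D_r \leq t$ because the $y_i y_j$ factor accounts for two of the at most $t+2$ total degrees. Expanding $\tilde q_r^2$ (again using $y_i^2 \equiv y_i,\ y_j^2 \equiv y_j$) and matching the coefficients of $1,\ y_i,\ y_j$ against $-g_{ij}\,y_i y_j$ forces $\sum_r A_r^2,\ \sum_r B_r^2,\ \sum_r C_r^2$ to lie in $I_2[x_{-i,j}]$. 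Since the quotient ring $\mathbb{R}[x_{-i,j}]/I_2[x_{-i,j}] \cong \mathbb{R}^{2^{n-2}}$ is a product of copies of $\mathbb{R}$, any vanishing sum of squares there must have each summand vanish, so $A_r, B_r, C_r \in I_2[x_{-i,j}]$ for every $r$. The $y_i y_j$-coefficient then collapses to $\sum_r D_r^2 \equiv -g_{ij} \mod I_2[x_{-i,j}]$, which is the required $t$-sos certificate, establishing $t$-sos submodularity.
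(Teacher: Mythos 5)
Your proof is correct, and while the forward direction is in substance the paper's argument, the backward direction takes a genuinely different route. For the forward implication the paper also telescopes $H_1$ down to the second derivatives --- it does so in two stages, first reducing to differences of first derivatives via Lemma~\ref{lem:1.to.2} and Theorem~\ref{thm:equiv.sos.submod.ii} (obtaining $(t+1)$-sos certificates which it multiplies by a single factor $(y_i-x_i)$), rather than in one double telescoping down to $t$-sos certificates for $-\partial^2F/\partial x_i\partial x_k$ multiplied by $(y_i-x_i)(y_k-x_k)$ --- but the interpolating points and the absorption of the idempotent factors into the squares are the same, so the net $(t+2)$ certificate is identical. For the converse, the paper keeps $x$ free, substitutes only $y\mapsto[x]^{i=y_i,j=y_j}$, and deduces directly from multilinearity of $q_r$ together with the two vanishing identities $H_1(x,x)=0$ and $H_1([x]^{i=0},[x]^{i=1})=0$ (each of which forces the corresponding specialization of $q_r$ into the ideal) that $q_r(x,[x]^{i=y_i,j=y_j})\equiv(y_i-x_i)(y_j-x_j)\,q_r([x]^{i=0,j=0},[x]^{i=1,j=1})$, from which the degree drop to $t$ is read off. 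You instead additionally set $x_i=x_j=0$ so that $H_1$ collapses exactly to $-g_{ij}\,y_iy_j$, and then divide out $y_iy_j$ by an explicit coefficient comparison in the basis $1,\,y_i,\,y_j,\,y_iy_j$. Both routes rest on the same two ingredients --- that a sum of squares lying in $I_2[x_{-i,j}]$ forces each summand into $I_2[x_{-i,j}]$ (the quotient being a product of copies of $\mathbb{R}$), and a normal-form degree count --- so neither is more general, but yours has the advantage of not having to guess the factorization of the specialized $q_r$ in advance: the vanishing of $A_r,B_r,C_r$ falls out of the matching. One small imprecision to fix in a full write-up: the coefficient of $y_i$ in $\sum_r\tilde q_r^2$ is $\sum_r\bigl(B_r^2+2A_rB_r\bigr)$, not $\sum_r B_r^2$, so the coefficients must be processed in the order $1$, then $y_i$, then $y_j$, using $A_r\in I_2[x_{-i,j}]$ to kill the cross terms before concluding $\sum_r B_r^2\in I_2[x_{-i,j}]$; this is implicit in your argument and does not affect correctness.
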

This result is an algebraic counterpart of the equivalence between $(iii')$ and $(v')$ in Proposition~\ref{prop:submod.mle}. 

\begin{theorem} \label{thm:equiv.sos.submod.vi}
Let $f:2^{\Omega} \rightarrow \mathbb{R}$ be a set function and let $F$ be its MLE. Then, $f$ is $t$-sos submodular if and only if $H_2(x,y)\mathrel{\mathop{:}}=F(y)-F(x)-\sum_{i=1}^n (y_i-x_i) \cdot \frac{\partial F(y)}{\partial x_i}$ is $(t+2)$-sos modulo $I_1[x,y].$
\end{theorem}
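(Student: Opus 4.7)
I would follow the strategy of the proofs of Theorems~\ref{thm:equiv.sos.submod.ii}--\ref{thm:equiv.sos.submod.v} and establish the two directions separately. For the forward direction, I start from the exact polynomial identity
\[
H_2(x,y) = \sum_{\ell < k}(y_\ell - x_\ell)(y_k - x_k)\bigl[-\partial_\ell \partial_k F(z^{(k,\ell)})\bigr],
\]
obtained by a double telescoping of $F(y)-F(x)$: first flipping coordinates of $y$ to $x$ one at a time, then expanding the resulting difference of $\partial_k F$ values by the same procedure. Here $z^{(k,\ell)} \in \mathbb{R}^{n-2}$ is a specific vector whose entries interleave components of $x$ and $y$. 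Assuming $f$ is $t$-sos submodular, I substitute into this identity the certificate $-\partial_\ell\partial_k F \equiv \sum_r u_{\ell k, r}(x_{-\ell,k})^2 \mod{I_2[x_{-\ell,k}]}$ of Definition~\ref{def:t-sos-submod}; because every generator of $I_2[x_{-\ell,k}]$ post-substitution lies in $\{x_m^2 - x_m, y_m^2 - y_m\}_m \subset I_1[x,y]$, this lifts to a congruence modulo $I_1[x,y]$. Combining it with the idempotency $(y_k - x_k) \equiv (y_k - x_k)^2 \mod{I_1[x,y]}$ (immediate from $x_k^2 \equiv x_k$, $y_k^2 \equiv y_k$, $x_k y_k \equiv x_k$), each summand $(y_\ell - x_\ell)(y_k - x_k)\, u_{\ell k, r}(z^{(k,\ell)})^2$ rewrites modulo $I_1[x,y]$ as $\bigl[(y_\ell - x_\ell)(y_k - x_k)\, u_{\ell k, r}(z^{(k,\ell)})\bigr]^2$, a square of a polynomial of degree at most $t+2$.

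For the reverse direction, assume $H_2 \equiv \sum_r q_r(x,y)^2 \mod{I_1[x,y]}$ with $\deg q_r \le t+2$. Fix $i \neq j$ and apply the partial substitution $x_i = 0$, $x_j = 0$, $x_k = y_k = z_k$ for $k \neq i,j$, \emph{keeping $y_i$ and $y_j$ as free variables}. A direct multilinear computation shows that $H_2$ specializes to $-\partial_i\partial_j F(z_{-i,j})\, y_i y_j$ and that every substituted generator of $I_1[x,y]$ lies in $J := \langle y_i^2 - y_i,\ y_j^2 - y_j\rangle + I_2[z_{-i,j}]$. Reducing each substituted $q_r$ modulo $\langle y_i^2 - y_i, y_j^2 - y_j\rangle$ gives a unique representation $a_r(z) + b_r(z)\,y_i + c_r(z)\,y_j + d_r(z)\,y_i y_j$; a careful degree count shows $\deg d_r \le t$, because the monomials of $q_r$ that contribute to $d_r$ must already contain both $y_i$ and $y_j$, using up $2$ of the $t+2$ available degrees. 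Matching coefficients in the basis $\{1, y_i, y_j, y_i y_j\}$ between $\sum_r q_r^2$ and $-\partial_i\partial_j F(z_{-i,j})\, y_i y_j$, the constant-term equation $\sum_r a_r^2 \equiv 0 \mod{I_2[z_{-i,j}]}$ forces $a_r \equiv 0 \mod{I_2[z_{-i,j}]}$ for each $r$ (a sum of squares vanishing on the finite real variety $\{0,1\}^{n-2}$ must have each summand vanish, and $I_2[z_{-i,j}]$ is radical); iterating on the $y_i$- and $y_j$-coefficient equations gives $b_r, c_r \equiv 0$, so the $y_i y_j$-equation collapses to $\sum_r d_r(z)^2 \equiv -\partial_i\partial_j F(z_{-i,j}) \mod{I_2[z_{-i,j}]}$, the required $t$-sos certificate.

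The main obstacle is the degree bookkeeping in the reverse direction: the naive substitution that also fixes $y_i = 1,\ y_j = 1$ recovers $-\partial_i\partial_j F$ from $H_2$ but only yields a $(t+2)$-sos certificate. Keeping $y_i, y_j$ as indeterminates is what makes the argument work, because it forces the $y_i y_j$-coefficient of each $q_r^2$ to originate from monomials already carrying both $y_i$ and $y_j$, trimming exactly two degrees from the $z$-part and producing the desired $t$-sos decomposition of $-\partial_i\partial_j F$.
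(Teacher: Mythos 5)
Your proposal is correct, and while the forward direction is essentially the paper's argument, your converse takes a genuinely different route. For the ``only if'' direction, the paper first writes $H_2(x,y)=\sum_{i}(y_i-x_i)\left(\frac{\partial F([yx]^i)}{\partial x_i}-\frac{\partial F(y)}{\partial x_i}\right)$ and then invokes the first-order characterization (Theorem~\ref{thm:equiv.sos.submod.ii}) to make each bracket $(t+1)$-sos; your double-telescoping identity over pairs $\ell<k$ is exactly this two-step telescoping inlined into a single sum of second derivatives at interleaved points, followed by the same idempotency trick $(y_k-x_k)\equiv(y_k-x_k)^2 \bmod I_1[x,y]$, so the two arguments coincide in substance. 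For the converse, the paper works with the normal forms $q_r$, deduces $q_r(x,x)\equiv 0$ and $q_r([x]^{i=0},[x]^{i=1})\equiv 0$ from the corresponding vanishing of $H_2$, uses multilinear interpolation to factor $(y_i-x_i)(y_j-x_j)$ out of $q_r$ (which delivers the degree drop from $t+2$ to $t$), and then evaluates at $x_i=x_j=0$, $y_i=y_j=1$ via $-\frac{\partial^2 F(x)}{\partial x_i\partial x_j}=H_2([x]^{i=0,j=0},[x]^{i=1,j=1})$. You instead keep $y_i,y_j$ as indeterminates, expand each substituted $q_r$ in the basis $\{1,y_i,y_j,y_iy_j\}$, and kill the coefficients $a_r,b_r,c_r$ by the radicality of $I_2[z_{-i,j}]$ applied successively to the constant, $y_i$-, and $y_j$-coefficient equations (a sum of squares vanishing on $\{0,1\}^{n-2}$ forces each summand into the ideal); the degree bound $\deg d_r\le t$ then comes from the observation that any monomial feeding the $y_iy_j$-coefficient already spends two degrees on $y_i$ and $y_j$. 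Both mechanisms produce the same $t$-sos certificate; the paper's is shorter because it recycles the machinery of Theorem~\ref{thm:equiv.sos.submod.v} verbatim, while yours is more self-contained (it never needs the auxiliary vanishing $H_2([x]^{i=0},[x]^{i=1})=0$, recovering the cancellation of the low-order terms directly from the sum-of-squares structure) at the cost of an explicit coefficient-matching computation. Your closing remark is also on point: the naive full substitution $y_i=y_j=1$ only yields a $(t+2)$-sos certificate, and keeping $y_i,y_j$ free is precisely what recovers the two missing degrees.
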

This result is an algebraic counterpart of the equivalence between $(iii')$ and $(vi')$ in Proposition~\ref{prop:submod.mle}. 

\begin{theorem} \label{thm:equiv.sos.submod.iv}
Let $f:2^{\Omega} \rightarrow \mathbb{R}$ be a set function and let $F$ be its MLE. Then, $f$ is $t$-sos submodular if and only if $G_1(x,y,s)\mathrel{\mathop{:}}=F(x)-F(y)+\sum_{i=1}^n (y_i-s_i) \cdot \frac{\partial F(x)}{\partial x_i}-\sum_{i=1}^n (x_i-s_i)\cdot \frac{\partial F(x+y-s)}{\partial x_i}$ is $(t+2)$-sos modulo $I_0[x,y,s].$
\end{theorem}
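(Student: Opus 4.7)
The plan is to prove the equivalence by exploiting an algebraic decomposition of $G_1$ in terms of the polynomials $H_1$ and $H_2$ of Theorems~\ref{thm:equiv.sos.submod.v} and~\ref{thm:equiv.sos.submod.vi}: the forward direction uses this decomposition directly, while the reverse direction specializes $G_1$ to recover $H_2$.

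\emph{Forward direction.} The key identity, verified by direct expansion using $u_i - x_i = y_i - s_i$ and $u_i - y_i = x_i - s_i$, is
\[ G_1(x,y,s) \,=\, H_1(x,\, u) \,+\, H_2(y,\, u), \qquad u := x + y - s. \]
It is the algebraic shadow of the fact that the set-function inequality $(iv)$ in Definition~\ref{def:f.submodular} can be derived by chaining $(v)$ (applied to $S \subseteq S \cup T$) with $(vi)$ (applied to $T \subseteq S \cup T$). Assuming $f$ is $t$-sos submodular, Theorems~\ref{thm:equiv.sos.submod.v} and~\ref{thm:equiv.sos.submod.vi} furnish $(t+2)$-sos decompositions of $H_1(a,b)$ and $H_2(a,b)$ modulo $I_1[a,b]$. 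I would then substitute $(a, b) = (x, u)$ in the first and $(a, b) = (y, u)$ in the second, and check that every generator of $I_1[a,b]$ lies in $I_0[x,y,s]$ after substitution. The nontrivial checks are $u_i^2 - u_i$, $x_i u_i - x_i$, and $y_i u_i - y_i$; each follows by expanding $u_i = x_i + y_i - s_i$ and invoking the relations $x_i y_i \equiv s_i$, $x_i s_i \equiv s_i$, $y_i s_i \equiv s_i$, and $s_i^2 \equiv s_i$ of $I_0$. Summing the two substituted certificates produces the desired $(t+2)$-sos decomposition of $G_1$ modulo $I_0$.

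\emph{Reverse direction.} Apply the ring homomorphism $y \mapsto s$ to the assumed sos decomposition. Then $u = x + y - s$ collapses to $x$ and the two inner sums combine to give
\[ G_1(x, s, s) \,=\, F(x) - F(s) - \sum_i (x_i - s_i)\,F_i(x) \,=\, H_2(s,\, x), \]
which matches the $H_2$-polynomial of Theorem~\ref{thm:equiv.sos.submod.vi} with its first and second arguments set to $s$ and $x$ respectively (noting that $s \le x$ on the variety). Under the same homomorphism each generator of $I_0[x,y,s]$ lands in $I_1[x,s]$: $x_i y_i - s_i \mapsto x_i s_i - s_i$, $y_i^2 - y_i$ and $y_i s_i - s_i$ both map to $s_i^2 - s_i$, and the remaining three generators are unchanged. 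Hence any $(t+2)$-sos decomposition of $G_1$ modulo $I_0$ restricts to a $(t+2)$-sos decomposition of $H_2(s,x)$ modulo $I_1[x,s]$, and Theorem~\ref{thm:equiv.sos.submod.vi} then delivers that $f$ is $t$-sos submodular.

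The main---and essentially the only---creative obstacle is discovering the decomposition $G_1 = H_1(x,u) + H_2(y,u)$; once it is in hand, both directions reduce cleanly to previously established equivalences, with the remaining work being routine bookkeeping about how generators of $I_1$ embed into $I_0$ and conversely how $I_0$ projects onto $I_1$ under $y \mapsto s$.
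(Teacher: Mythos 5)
Your proposal is correct, and both directions differ from the paper's proof in a way worth noting. For the forward direction, the paper works from the first-order characterization (Theorem~\ref{thm:equiv.sos.submod.ii}): it telescopes $F(x+y-s)-F(x)$ and $F(x+y-s)-F(y)$ via Lemma~\ref{lem:1.to.2}, substitutes the $(t+1)$-sos certificates of $\frac{\partial F}{\partial x_i}(x)-\frac{\partial F}{\partial x_i}(\cdot)$ along the interleaved points $[(x+y-s)x]^i$ and $[(x+y-s)y]^i$, and absorbs the factors $(y_i-s_i)$ and $(x_i-s_i)$ into the squares using idempotency mod $I_0[x,y,s]$. Your identity $G_1(x,y,s)=H_1(x,u)+H_2(y,u)$ with $u=x+y-s$ (which I have checked: $u_i-x_i=y_i-s_i$ and $u_i-y_i=x_i-s_i$ make the $F(u)$ terms cancel) packages that same cancellation at the level of the zeroth-order polynomials, so you can instead pull the already-proven $(t+2)$-sos certificates of Theorems~\ref{thm:equiv.sos.submod.v} and~\ref{thm:equiv.sos.submod.vi} through the affine substitution $b\mapsto x+y-s$; the only work is verifying that the images of the generators of $I_1[a,b]$ (in particular $u_i^2-u_i$, $x_iu_i-x_i$, $y_iu_i-y_i$) lie in $I_0[x,y,s]$, which they do, and the substitution is degree-preserving, so the certificates stay $(t+2)$-sos. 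This buys a shorter argument with no telescoping bookkeeping, at the cost of routing through two prior theorems rather than one. For the reverse direction, the paper specializes $s=x$ to recover $H_1(x,y)$ and invokes Theorem~\ref{thm:equiv.sos.submod.v}, whereas you specialize $y=s$ to recover $H_2(s,x)$ and invoke Theorem~\ref{thm:equiv.sos.submod.vi}; your check that every generator of $I_0[x,y,s]$ maps into $I_1[s,x]$ under $y\mapsto s$ is correct, so the two specializations are equally valid mirror images. No gaps; since Theorems~\ref{thm:equiv.sos.submod.v} and~\ref{thm:equiv.sos.submod.vi} precede this result, there is no circularity.
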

This result is an algebraic counterpart of the equivalence between $(iii')$ and $(iv')$ in Proposition~\ref{prop:submod.mle}. 

\begin{theorem} \label{thm:equiv.sos.submod.vii}
Let $f:2^{\Omega} \rightarrow \mathbb{R}$ be a set function and let $F$ be its MLE. Then, $f$ is $t$-sos submodular if and only if $G_2(x,y,s)\mathrel{\mathop{:}}=F(y)-F(x)-\sum_{i=1}^n (y_i-s_i) \frac{\partial F(y)}{\partial x_i}+\sum_{i=1}^n (x_i-s_i)\frac{\partial F(s)}{\partial x_i} $ is $(t+2)$-sos modulo $I_0[x,y,s].$
\end{theorem}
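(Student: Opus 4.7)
The plan is to reduce the statement to Theorem~\ref{thm:equiv.sos.submod.iv} via a complementation change of variables, rather than construct a direct sum-of-squares certificate. The key observation is that condition $(vii)$ in Definition~\ref{def:f.submodular} is obtained from condition $(iv)$ by replacing $S, T$ with their complements in $\Omega$; accordingly, the algebraic counterparts should be related by the substitution $x \mapsto \mathbf{1} - x$, $y \mapsto \mathbf{1} - y$, lifted to the $s$-variable as $s \mapsto \mathbf{1} - x - y + s$ (which corresponds to complementing the intersection).

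First, I would introduce the complementary MLE $\tilde F(x) := F(\mathbf{1} - x)$ and argue that $f$ is $t$-sos submodular if and only if $\tilde f$ is. This holds because the substitution $x \mapsto \mathbf{1} - x$ is an affine involution that fixes each generator $x_k^2 - x_k$ of $I_2[x_{-i,j}]$ and preserves both degrees and sums of squares; thus $-\partial^2 \tilde F / \partial x_i \partial x_j$ is $t$-sos modulo $I_2[x_{-i,j}]$ if and only if $-\partial^2 F / \partial x_i \partial x_j$ is.

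Next, applying Theorem~\ref{thm:equiv.sos.submod.iv} to $\tilde F$ gives: $\tilde f$ is $t$-sos submodular iff
\[
G_1^{\tilde F}(x,y,s) := \tilde F(x) - \tilde F(y) + \sum_i (y_i - s_i)\,\partial_i \tilde F(x) - \sum_i (x_i - s_i)\,\partial_i \tilde F(x+y-s)
\]
is $(t+2)$-sos modulo $I_0[x,y,s]$. I would then perform the substitution $\sigma : x \mapsto \mathbf{1} - x$, $y \mapsto \mathbf{1} - y$, $s \mapsto \mathbf{1} - x - y + s$. Using $\partial_i \tilde F(z) = -\partial_i F(\mathbf{1} - z)$, the identity $\sigma(x+y-s) = \mathbf{1} - s$, and the swaps $\sigma(y_i - s_i) = x_i - s_i$ and $\sigma(x_i - s_i) = y_i - s_i$, a direct calculation shows $\sigma(G_1^{\tilde F}) = G_2(y,x,s)$, i.e., $G_2$ with the roles of $x$ and $y$ swapped.

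The final technical step is to verify that $\sigma$ is an automorphism of $I_0[x,y,s]$, which amounts to showing each of the six generators maps back into $I_0$. For example, $x_iy_i - s_i$ maps to itself, $x_is_i - s_i$ maps to $(x_i^2 - x_i) + (x_iy_i - s_i) - (x_is_i - s_i)$, and $s_i^2 - s_i$ expands to $(x_i^2 - x_i) + (y_i^2 - y_i) + (s_i^2 - s_i) + 2(x_iy_i - s_i) - 2(x_is_i - s_i) - 2(y_is_i - s_i)$. Since linear automorphisms preserving the ideal preserve $(t+2)$-sos-ness modulo it, and since the further $x \leftrightarrow y$ swap is likewise an automorphism of $I_0$ (the generators being symmetric in $x, y$), we conclude that $G_1^{\tilde F}$ is $(t+2)$-sos mod $I_0$ iff $G_2$ is. The main obstacle is the bookkeeping in the ideal-invariance computation: each of the six generators must be expanded under $\sigma$ and explicitly recognized as a combination of generators, which is elementary but tedious. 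Beyond that, no new sum-of-squares manipulation is required: the conclusion is purely formal once Theorem~\ref{thm:equiv.sos.submod.iv} is invoked.
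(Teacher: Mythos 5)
Your proof is correct, but it takes a genuinely different route from the paper's. The paper proves Theorem~\ref{thm:equiv.sos.submod.vii} directly: for the forward direction it telescopes $F(x)-F(s)$ and $F(y)-F(s)$ via Lemma~\ref{lem:1.to.2}, substitutes the first-order certificates from Theorem~\ref{thm:equiv.sos.submod.ii} along the interpolating points $[xs]^i$ and $[ys]^i$, and assembles an explicit sos representative of $G_2$; for the converse it sets $s=x$, observes $G_2(x,y,x)=H_2(x,y)$, and invokes Theorem~\ref{thm:equiv.sos.submod.vi}. You instead reduce the whole statement to Theorem~\ref{thm:equiv.sos.submod.iv} by the complementation substitution $\sigma:(x,y,s)\mapsto(\mathbf{1}-x,\mathbf{1}-y,\mathbf{1}-x-y+s)$, using Proposition~\ref{prop:ops.maintain.sos.submod} to pass between $f$ and $\tilde f$ (an equivalence, since complementation is an involution), and verifying that $\sigma$ and the $x\leftrightarrow y$ swap are affine automorphisms of $I_0[x,y,s]$. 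I checked the key computations: $\sigma(x+y-s)=\mathbf{1}-s$, $\sigma(G_1^{\tilde F})=G_2(y,x,s)$, and each of the six generators of $I_0$ maps back into $I_0$ (your expansion of $\sigma(s_i^2-s_i)$ is right), so the argument goes through with no circularity, as Theorem~\ref{thm:equiv.sos.submod.iv} and the complementation result are established independently of $(vii')$. What each approach buys: the paper's proof hands you the sos certificate for $G_2$ in closed form and treats both directions symmetrically with the machinery already built for $(iv')$; yours is shorter and more conceptual, making explicit the duality between conditions $(iv)$ and $(vii)$ under set complementation and the invariance of $I_0$ under the corresponding affine involution, while still being effectively constructive since a certificate for $G_1^{\tilde F}$ transforms into one for $G_2$ by an explicit degree-preserving substitution. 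The one place to be careful is exactly the one you flag, namely checking ideal invariance generator by generator, and your bookkeeping there is correct.
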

This result is an algebraic counterpart of the equivalence between $(iii')$ and $(vii')$ in Proposition~\ref{prop:submod.mle}. The proof of this result, together with all other proofs for this section, can be found in Appendix~\ref{appendix:equiv.charac.sos.submod}.

\begin{remark}
For the characterizations given in Theorems \ref{thm:equiv.sos.submod.i}, \ref{thm:equiv.sos.submod.iv}, and \ref{thm:equiv.sos.submod.vii}, it might seem more natural, at first glance, to work modulo the ideal:
$$\tilde{I}_0[x,y]\mathrel{\mathop{:}}=\langle x_1^2-x_1,\ldots,x_n^2-x_n, y_1^2-y_1,\ldots,y_n^2-y_n\rangle.$$
However, it can be shown that with this ideal, statements analogous to theorems \ref{thm:equiv.sos.submod.i}, \ref{thm:equiv.sos.submod.iv}, and \ref{thm:equiv.sos.submod.vii} would not be true. One can show, e.g., that the set of functions for which $G_1(x,y,x\circ y)$ is $(t+3)$-sos modulo $\tilde{I}_0[x,y]$ is a strict subset of the set of $t$-sos-submodular functions; but the set of functions for which $G_1(x,y,x\circ y)$ is $(t+4)$-sos modulo $\tilde{I}_0[x,y]$ is a strict superset of the set of $t$-sos-submodular functions. 
\end{remark}

\begin{proposition} \label{prop:size}
Let $t \in \mathbb{N}$. Checking whether a set function $f:2^{\Omega} \rightarrow \mathbb{R}$ is $t$-sos submodular using Theorem \ref{thm:equiv.sos.submod.ii} (resp. Theorems \ref{thm:equiv.sos.submod.i}, \ref{thm:equiv.sos.submod.v}, \ref{thm:equiv.sos.submod.vi}, \ref{thm:equiv.sos.submod.iv}, \ref{thm:equiv.sos.submod.vii}) involves solving a semidefinite program with $n$ (resp. 1) semidefinite constraints of size $\sum_{k=0}^{t+1} \binom{n-1}{k}\cdot 2^k$ (resp. $\sum_{k=0}^{t+2} \binom{n}{k}3^k$, $\sum_{k=0}^{t+2} \binom{n}{k}2^k$, $\sum_{k=0}^{t+2} \binom{n}{k}2^k$,
$\sum_{k=0}^{t+2} \binom{n}{k}3^k$, $\sum_{k=0}^{t+2} \binom{n}{k}3^k$).
\end{proposition}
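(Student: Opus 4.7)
The plan is to apply, for each equivalent characterization in Theorems \ref{thm:equiv.sos.submod.ii}--\ref{thm:equiv.sos.submod.vii}, the standard semidefinite-programming reformulation of ``$p$ is $t$-sos mod $I[x]$'' recalled after Definition \ref{def:t-sos}. For a zero-dimensional ideal $I$ whose generators include $x_i^2 - x_i$ for each variable, this reformulation searches for a positive semidefinite Gram matrix $Q$ with $p \equiv z_t^T Q z_t \bmod I$, where $z_t$ is a basis of normal-form monomials of degree at most $t$ in the quotient ring. Each polynomial identity to be certified contributes one semidefinite constraint of size $|z_t|$, so the proof reduces to counting, for each theorem, (a) the number of polynomial identities and (b) the cardinality $|z_t|$ for the corresponding ideal at the corresponding degree cutoff.

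The first step is computing $|z_t|$ for the ideals $I_1[x,y]$ and $I_0[x,y,s]$ (the case of $I_2$ is already used in Proposition \ref{prop:tractability.sos.submod}). For $I_1[x,y]$, the generators $\{x_i^2 - x_i,\ y_i^2 - y_i,\ x_i y_i - x_i\}$ imply that, after reduction, every monomial in $(x_i, y_i)$ collapses to one of $1, x_i, y_i$ (in particular $x_i y_i$ reduces to $x_i$). So each index $i$ contributes $3$ normal-form choices, and counting normal-form monomials of total degree at most $t$ yields $\sum_{k=0}^{t} \binom{n}{k} 2^k$ (choose $k$ active indices, then one of the two linear monomials $x_i$ or $y_i$ per active index). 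For $I_0[x,y,s]$, the six generators per index collapse every monomial in $(x_i, y_i, s_i)$ to one of $1, x_i, y_i, s_i$ (e.g.\ $x_iy_is_i$ reduces via $x_iy_i\to s_i$ to $s_i^2\to s_i$), giving $4$ normal-form choices per index and a count of $\sum_{k=0}^{t} \binom{n}{k} 3^k$ for total degree at most $t$.

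To justify that these candidate monomial sets are genuine linear bases of the quotient rings (rather than just spanning sets), I would invoke the fact that $I_1$ and $I_0$ are radical zero-dimensional ideals: the generators force every coordinate to lie in $\{0,1\}$, so the complex varieties coincide with the real varieties computed in Section~\ref{subsec:equiv.charac.sos.submod}, namely $|\mathcal{V}_\mathbb{R}(I_1[x,y])| = 3^n$ and $|\mathcal{V}_\mathbb{R}(I_0[x,y,s])| = 4^n$. By the Nullstellensatz, $\dim \mathbb{R}[x,y]/I_1[x,y] = 3^n$ and $\dim \mathbb{R}[x,y,s]/I_0[x,y,s] = 4^n$, matching the candidate bases exactly and precluding any hidden relations. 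This linear-independence step is the only non-routine part of the argument; the remainder is bookkeeping.

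Combining these ingredients with the per-theorem identity count gives the proposition directly. Theorem \ref{thm:equiv.sos.submod.ii} prescribes one $(t+1)$-sos identity per $i \in \{1, \ldots, n\}$ modulo $I_1[x_{-i}, y_{-i}]$ (an ideal in $2(n-1)$ variables), yielding $n$ blocks of size $\sum_{k=0}^{t+1} \binom{n-1}{k} 2^k$. Theorems \ref{thm:equiv.sos.submod.v} and \ref{thm:equiv.sos.submod.vi} each prescribe one $(t+2)$-sos identity modulo $I_1[x,y]$, giving a single block of size $\sum_{k=0}^{t+2} \binom{n}{k} 2^k$. Theorems \ref{thm:equiv.sos.submod.i}, \ref{thm:equiv.sos.submod.iv}, and \ref{thm:equiv.sos.submod.vii} each prescribe one $(t+2)$-sos identity modulo $I_0[x,y,s]$, giving a single block of size $\sum_{k=0}^{t+2} \binom{n}{k} 3^k$, matching the claim in all six cases.
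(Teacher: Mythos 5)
Your proposal is correct and follows essentially the same route as the paper: count one semidefinite block per sos identity ($n$ for Theorem \ref{thm:equiv.sos.submod.ii}, one for each of the others) and compute the block size by counting normal-form monomials, observing that modulo $I_1$ each index contributes a choice among $\{1,x_i,y_i\}$ and modulo $I_0$ among $\{1,x_i,y_i,s_i\}$, which yields exactly the stated sums at the degree cutoffs $t+1$ and $t+2$. The only difference is cosmetic: you certify that these monomials form a genuine basis of the quotient ring via a Nullstellensatz dimension count against $|\mathcal{V}(I_1)|=3^n$ and $|\mathcal{V}(I_0)|=4^n$, whereas the paper gets the same conclusion from the fact (stated in Appendix \ref{appendix:background.material}) that the generators are reduced Gr\"obner bases; both are valid.
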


\begin{table}[h]
    \centering
    \begin{tabular}{|c|c|c|c|c|c|c|}
    \hline
    Function & Degree & Ideal & \# of constr. & Size of constr. & Ref. theorems & Equiv. charac. \\
    \hline \hline
    $-\frac{\partial^2 F(x)}{\partial x_i \partial x_j}$ & t & $I_2[x_{-i,j}]$ & $n(n-1)/2$ & $\sum_{k=0}^t \binom{n-2}{k}$ & Def \ref{def:t-sos-submod} \& Prop \ref{prop:tractability.sos.submod} & $(iii')$ \\
    \hline
    $G(x,y,s)$ & $t+2$ & $I_0[x,y,s]$ & $1$ & $\sum_{k=0}^{t+2} \binom{n}{k} 3^k$ & Th. \ref{thm:equiv.sos.submod.i} \& Prop \ref{prop:size} & $(i')$ \\
     \hline
    $\frac{\partial F(x)}{\partial x_i} - \frac{\partial F(y}{\partial x_i}$ & $t+1$ & $I_1[x_{-i},y_{-i}]$ & $n$ & $\sum_{k=0}^{t+1} \binom{n-1}{k} 2^k$ & Th. \ref{thm:equiv.sos.submod.ii} \& Prop \ref{prop:size} & $(ii')$ \\
     \hline
    $G_1(x,y,s)$ & $t+2$ & $I_0[x,y,s]$ & $1$ & $\sum_{k=0}^{t+2} \binom{n}{k} 3^k$ & Th. \ref{thm:equiv.sos.submod.iv} \& Prop \ref{prop:size} & $(iv')$ \\
     \hline
    $H_1(x,y)$ & $t+2$ & $I_1[x,y]$ & $1$ & $\sum_{k=0}^{t+2} \binom{n}{k} 2^k$ & Th. \ref{thm:equiv.sos.submod.v} \& Prop \ref{prop:size} & $(v')$ \\
     \hline
    $H_2(x,y)$ & $t+2$ & $I_1[x,y]$ & $1$ & $\sum_{k=0}^{t+2} \binom{n}{k} 2^k$ & Th. \ref{thm:equiv.sos.submod.vi} \& Prop \ref{prop:size} & $(vi')$ \\
     \hline
    $G_2(x,y,s)$ & $t+2$ & $I_0[x,y,s]$ & $1$ & $\sum_{k=0}^{t+2} \binom{n}{k} 3^k$ & Th. \ref{thm:equiv.sos.submod.vii} \& Prop \ref{prop:size} & $(vii')$ \\
    \hline
    \end{tabular}
    \vspace{3mm}
    \caption{Comparison of the equivalent characterizations of $t$-sos submodularity;\\ see Theorems \ref{thm:equiv.sos.submod.ii}---\ref{thm:equiv.sos.submod.vii} for definitions of the functions appearing in the table. 
    %
    }
    \label{tab:summary.equiv.charac}
\end{table}

We summarize the results given in this section in Table \ref{tab:summary.equiv.charac}. From this table, assuming that $t \ll n$ and noting that the driver of complexity of an SDP is the size of the SDP constraint, rather than the number of SDP constraints, it becomes clear that our current definition of $t$-sos submodularity is the cheapest one to implement with an SDP constraint of size approx. $(n-2)^t$ (in contrast to, e.g., the equivalent characterizations coming from $(i')$, whose SDP constraint is of size $(3n)^{t+2}$, or from $(ii')$, whose SDP constraint is of size $(2n)^{t+1}$). This is why we define $t$-sos submodularity using the second-order characterization. Further note that $I_2[x_{-i,j}]$ is perhaps one of the most standard ideals to work with, in contrast to $I_1[x,y]$ or $I_0[x,y,s]$, which are a little more involved.

\begin{remark}
The results presented in the first three lines of Table \ref{tab:summary.equiv.charac} are the submodular analogs of \citep[Theorem 3.1]{ahmadi2013complete} for convexity. Our proofs however 
are constructive, whereas some of theirs are not. 
\end{remark}

\begin{example}
To illustrate the results in Table \ref{tab:summary.equiv.charac}, we consider the simple example of the cut function $f$ over the complete graph with 2 nodes. In this setting, the MLE of $f$ is $F(x_1,x_2)=x_1+x_2-2x_1x_2.$ It is not difficult to see that $f$ is submodular. Indeed, Proposition \ref{prop:submod.mle} $(iii')$ holds as
$$\frac{\partial F(x)}{\partial x_1 \partial x_2}=-2 \leq 0, \forall x \in \{0,1\}^2.$$
Furthermore, $f$ is $0$-sos submodular, as $-\frac{\partial F(x)}{\partial x_1 \partial x_2} \equiv (\sqrt{2})^2$ mod $I_2[x_{-1,2}]$. We then have that:
$$\frac{\partial F(x)}{\partial x_i}-\frac{\partial F(y)}{\partial x_i}=-2x_{k}+2y_k\equiv 2(y_k-x_k)^2 \mod{I_1[x_{-i},y_{-i}]}, \text{ for } (i,k) \in \{(1,2),(2,1)\}.$$
Thus, $\frac{\partial F(x)}{\partial x_i}-\frac{\partial F(y)}{\partial x_i}$ is $1$-sos mod $I_1[x_{-i},y_{-i}]$ for $i\in \{1,2\}$ as in Theorem \ref{thm:equiv.sos.submod.ii}. We also have:
\begin{align*}
G(x,y,s)&\equiv 2(x_1-s_1)^2(y_2-s_2)^2+2(x_2-s_2)^2(y_1-s_1)^2 \mod{I_0[x,y,s]}\\
H_1(x,y)&\equiv 2(y_1-x_1)^2 \cdot (y_2-x_2)^2 \mod{I_1[x,y]} \\
H_2(x,y)&\equiv 2(y_1-x_1)^2 \cdot (y_2-x_2)^2 \mod{I_1[x,y]}\\
G_1(x,y,s)&\equiv 2(x_1-s_1)^2\cdot (x_2-s_2)^2+2(y_2-s_2)^2\cdot (y_1-s_1)^2 \mod{I_0[x,y,s]}\\
G_2(x,y,s)&\equiv 2(x_1-s_1)^2\cdot (x_2-s_2)^2+2(y_2-s_2)^2\cdot (y_1-s_1)^2 \mod{I_0[x,y,s]}
\end{align*}
illustrating Theorems \ref{thm:equiv.sos.submod.i}---\ref{thm:equiv.sos.submod.vii}.
\end{example}

\subsection{Operations that preserve $t$-sos submodularity} \label{subsec:ops.preserving.sos.submod}

There are many well-known operations that preserve submodularity \citep[Appendix~B]{bach2013learning}. Do these operations also preserve $t$-sos submodularity? We answer this question now.

\begin{proposition} \label{prop:ops.maintain.sos.submod}
The following operations maintain $t$-sos submodularity:
\begin{itemize}
\item (Nonnegative weighted sums). If $f_1,\ldots,f_n$ are $t$-sos submodular and $w_1,\ldots,w_n$ are nonnegative weights, then $w_1f_1+\ldots+w_nf_n$ is $t$-sos submodular.
\item (Complementation). If $f$ is $t$-sos submodular, then the set function $g(S)=f(\Omega \backslash S)$ is $t$-sos submodular. 
\item (Restriction). If $f$ is $t$-sos submodular and $A \subseteq \Omega$ is fixed, then the set function $g(S)=f(S\cap A)$ is $t$-sos submodular. 
\item (Contraction.) If $f$ is $t$-sos submodular and $A \subseteq \Omega$ is fixed, then the set function $g(S)=f(S\cup A)-f(A)$ is $t$-sos submodular.
\end{itemize}
\end{proposition}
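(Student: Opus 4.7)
The plan is to verify each of the four closure properties by tracking how the multilinear extension, and in particular the mixed second-order partial derivatives of Definition~\ref{def:t-sos-submod}, transform under the operation. In every case I will start from the sos-mod-$I_2[x_{-i,j}]$ certificate guaranteed for $f$ and construct a certificate of the same degree $t$ for the transformed function $g$.

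Nonnegative weighted sums are the easiest: the MLE operator is linear, so the MLE of $\sum_k w_k f_k$ is $\sum_k w_k F_k$, and partial differentiation commutes with this linear combination. Since $t$-sos polynomials modulo a fixed ideal form a convex cone (a nonnegative scalar times a square is itself a square via $w\cdot q^2=(\sqrt{w}\,q)^2$, and sums of $t$-sos polynomials are $t$-sos), the function $-\sum_k w_k\,\partial^2 F_k/(\partial x_i\partial x_j)$ is $t$-sos modulo $I_2[x_{-i,j}]$, and so $\sum_k w_k f_k$ is $t$-sos submodular.

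For the remaining three operations, the unifying observation is that the MLE of $g$ takes the form $G(x)=F(\varphi(x))$ plus possibly a constant, where $\varphi:\mathbb{R}^n\to\mathbb{R}^n$ acts coordinate-wise by an affine map $\varphi_k(x)=\alpha_k+\beta_k x_k$, with $(\alpha_k,\beta_k)\in\{(0,1),(1,-1),(0,a_k),(a_k,1-a_k)\}$ covering the identity, complementation ($\varphi(x)=\mathbf{1}-x$), restriction ($\varphi(x)=a\circ x$ with $a=\mathbf{1}_A$), and contraction ($\varphi(x)=x+a-x\circ a$, equivalent to fixing $x_k=1$ for $k\in A$). In each case $G(\mathbf{1}_S)$ evaluates to the target set function value, and since each $\varphi_k$ is linear in $x_k$ alone and $F$ is multilinear, $G$ is multilinear, so this is the unique MLE. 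Because $\varphi_k$ depends only on $x_k$, the chain rule gives, for $i\neq j$,
\[
\frac{\partial^2 G(x)}{\partial x_i\partial x_j}=\beta_i\beta_j\cdot\frac{\partial^2 F}{\partial x_i\partial x_j}(\varphi(x)).
\]
When $\beta_i\beta_j=0$ (possible for restriction and contraction), the derivative vanishes identically and is trivially $t$-sos. Otherwise $\beta_i\beta_j\in\{-1,1\}$, and in fact equals $+1$ in all cases (complementation: $(-1)^2$; restriction/contraction: $1\cdot 1$), so it suffices to substitute $x\mapsto\varphi(x)$ into the certificate $-\partial^2 F/(\partial x_i\partial x_j)\equiv\sum_r q_r^2(x_{-i,j})\mod I_2[x_{-i,j}]$.

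The one routine obstacle to dispatch is that this substitution must preserve both the ideal $I_2[x_{-i,j}]$ and the degree bound $t$ on the $q_r$'s. Ideal preservation follows by direct computation: for each of the four choices of $(\alpha_k,\beta_k)$ above, $\varphi_k(x)^2-\varphi_k(x)$ simplifies to either $0$ or $x_k^2-x_k$, so any element $\sum_{k\neq i,j}h_k(x)(x_k^2-x_k)$ of $I_2[x_{-i,j}]$ maps into $I_2[x_{-i,j}]$ after the substitution. Degree preservation is immediate because $\varphi_k$ is affine in $x_k$ alone, hence $q_r(\varphi(x)_{-i,j})$ is a polynomial in $x_{-i,j}$ of degree at most $\deg(q_r)\le t$. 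Combining these two facts yields $-\partial^2 G/(\partial x_i\partial x_j)\equiv\sum_r q_r(\varphi(x)_{-i,j})^2\mod I_2[x_{-i,j}]$, proving that $g$ is $t$-sos submodular in each case. I anticipate no substantive difficulty beyond this bookkeeping; the only mild care needed is identifying the correct $\varphi$ for contraction and checking that the additive constant $-f(A)$ disappears under the second derivative.
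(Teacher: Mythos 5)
Your proof is correct and follows essentially the same route as the paper's: linearity plus the cone property for weighted sums, and for the other three operations the observation that the MLE of $g$ is $F$ composed with a coordinate-wise affine map, so the sos certificate for $-\partial^2 F/\partial x_i\partial x_j$ transfers by substitution. If anything you are slightly more careful than the paper, which simply invokes ``$t$-sos is preserved under affine maps'' without verifying, as you do, that the substitution also maps $I_2[x_{-i,j}]$ into itself -- a check that is genuinely needed since the certificate is only an equivalence modulo that ideal.
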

The proof of this result can be found in Appendix \ref{appendix:ops.preserving.sos.submod}. It makes use of the fact that the set of $t$-sos polynomials is a cone and that the composition of a $t$-sos polynomial with an affine mapping remains $t$-sos. In contrast, the following submodularity-preserving operations do \emph{not} preserve $t$-sos submodularity, in a way that we formalize now.

\begin{proposition} \label{prop:ops.do.not.maintain.sos.submod}
Consider the following three operations:
\begin{itemize}
\item (Partial minimization). Let $f:2^{\Omega_1} \times 2^{\Omega_2} \rightarrow \mathbb{R}$ be a set function, where $\Omega_1, \Omega_2$ are two sets such that $\Omega_1 \cap \Omega_2=\emptyset.$ Define $g:2^{\Omega_1}\rightarrow \mathbb{R}$ to be the set function $g(S)=\min_{T \subseteq \Omega_2} f(S\cup T)$ for any $S \subseteq \Omega_1.$
\item (Convolution). Let $f:2^{\Omega} \rightarrow \mathbb{R}$ be a set function and let $u:2^{\Omega} \rightarrow \mathbb{R}$ be a modular function. Define $g:2^{\Omega}\rightarrow \mathbb{R}$ to be the set function $g(S)=\min_{T\subseteq S} f(T)+u(S\backslash T)$ for any $S\subseteq \Omega.$
\item  (Monotonization). Let $f:2^{\Omega} \rightarrow \mathbb{R}$ be a nonnegative set function. Define $g:2^{\Omega}\rightarrow \mathbb{R}$ to be the set function $g(S)=\min_{T\supseteq S} f(T)$ for any $S\subseteq \Omega.$
\end{itemize}
\revision{In each case, there exists a function $f$ that is $0$-sos-submodular for which the corresponding $g$ is $(n-2)$-sos-submodular, but not $t'$-sos-submodular for any $t'<n-2.$}
\end{proposition}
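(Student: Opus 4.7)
The plan is to exhibit, separately for each of the three operations, a quadratic submodular function $f$ (which is automatically $0$-sos submodular, as shown below) for which the induced $g$ attains the worst-case sos degree $n-2$. I begin by characterizing $0$-sos submodularity: since $-\partial^2 F(x)/\partial x_i \partial x_j$ is multilinear in $x_{-i,j}$ and the only $0$-sos polynomials mod $I_2[x_{-i,j}]$ are nonnegative constants, an MLE $F$ gives a $0$-sos submodular $f$ if and only if $F$ is quadratic with nonpositive off-diagonal coefficients; any candidate $f$ must therefore be quadratic submodular. The ``$g$ is $(n-2)$-sos submodular'' half of the claim is then automatic: the three operations are classically known to preserve submodularity (see \citep[Appendix B]{bach2013learning}), so $g$ is submodular, and since $\deg(g) \leq n$, Proposition \ref{prop:bounds.on.t} yields that $g$ is $\lceil (n+\deg(g)-5)/2 \rceil$-sos submodular, which is at most $n-2$.

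For the construction itself, I would design $f$ in each case so that $g$ agrees with a canonical ``hard'' submodular target of degree $n$---for instance, the natural $n$-variable generalization of the degree-$4$ submodular function cited in Section \ref{subsec:gap.sos.submod.submod} that is only $t$-sos submodular for very large $t$. For partial minimization, Boolean quadratic minimization over $\Omega_2$ of an appropriately chosen quadratic $f$ on $\Omega_1 \cup \Omega_2$ can emulate the hard target on $\Omega_1$; for convolution, one pairs a suitable quadratic $f$ with a modular $u$ whose convolution reproduces the target; for monotonization, one picks $f$ whose upper envelope reconstructs the target. A key advantage of the quadratic starting point is that operations involving minimization over discrete variables generically introduce high-degree multilinear terms into $g$, which is precisely what is needed to force $\deg(g)=n$.

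The main obstacle is the sos lower bound---showing $g$ is not $t'$-sos submodular for any $t' < n-2$. The strategy is SDP duality: failure of $-\partial^2 G/\partial x_i \partial x_j$ to be $t'$-sos mod $I_2[x_{-i,j}]$ is equivalent to the existence of a linear functional $L$ on $\mathbb{R}[x_{-i,j}]_{\leq 2t'}$ that vanishes on the ideal, has a psd pseudo-moment matrix on square-free monomials up to degree $t'$, yet evaluates the second derivative to a strictly negative value. I would construct $L$ as a signed combination of point evaluations at carefully chosen elements of $\{0,1\}^{n-2}$, exploiting the symmetries of the hard target to keep the moment matrix psd while concentrating negative mass on the top-degree multilinear monomial of $G$. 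The delicate combinatorial bookkeeping required to make this succeed for every $t' < n-2$ and every pair $(i,j)$ is the crux of the proof; verifying $0$-sos submodularity of $f$ and submodularity preservation under the three operations is routine in comparison.
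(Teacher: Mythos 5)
There is a genuine gap here: the two halves of the argument that actually carry the proposition---the explicit constructions of $f$ and the proof that $g$ is \emph{not} $t'$-sos-submodular for $t'<n-2$---are only sketched, and the sketch points at the wrong target. You propose to make $g$ emulate ``the natural $n$-variable generalization of the degree-$4$ submodular function'' from Section \ref{subsec:gap.sos.submod.submod}, but that counterexample (Proposition \ref{prop:CE.deg.4}) only fails to be $t$-sos-submodular for $t\leq\lceil\frac{n-2}{2}\rceil-1$; even if the emulation succeeded, it would certify failure only up to roughly $n/2$, not up to $n-3$ as the statement requires. The function you need $g$ to collapse to is essentially the full monomial $x_1\cdots x_n$ (or $1-\prod_i x_i$, or $1-\prod_i(1-x_i)$), whose negated mixed second derivative is $\prod_{k\neq i,j}x_k$, a polynomial that is $(n-2)$-sos but not $(n-3)$-sos mod $I_2[x_{-i,j}]$. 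Producing such a $g$ from a $0$-sos-submodular $f$ is not automatic: the paper has to tune the quadratic $f$ so that the minimization degenerates completely (e.g., for convolution, a quadratic $F$ with $F(0)=1$, $F(\mathbf{1})=0$ and $F(x)>1$ at every other vertex, so that $g=1_{S\neq\Omega}$; for partial minimization, $F(x,z)=Bz+(\sum_i w_ix_i)(1-z)$ with $B=2n-1$, $w_i=2$, so that $g(x)=\min\{B,\sum_i w_ix_i\}$ has MLE $\sum_i w_ix_i-\prod_ix_i$). None of these constructions, nor any substitute, appears in your proposal.

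The lower bound itself also does not require the SDP-duality/pseudo-moment machinery you describe, and leaving it as ``delicate combinatorial bookkeeping'' leaves the crux unproved. The elementary argument is: if $\prod_{k=1}^{m}x_k\equiv\sum_r q_r^2\bmod I_2[x]$ with $\deg(q_r)\leq m-1$ and $q_r$ multilinear, then since the left-hand side vanishes at every point of $\{0,1\}^m$ except $\mathbf{1}$, each $q_r$ vanishes there too; by uniqueness of the multilinear representation this forces every coefficient of $q_r$ on monomials in at most $m-1$ variables to be zero, and since $q_r$ has no degree-$m$ monomial, $q_r\equiv 0$, contradicting the value $1$ at $\mathbf{1}$. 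Your reduction of the upper-bound half to Proposition \ref{prop:bounds.on.t} and your characterization of $0$-sos submodularity are both correct, but as it stands the proposal establishes only the easy direction.
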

The proof of this result can be found in Appendix \ref{appendix:ops.preserving.sos.submod}. In simpler terms, it shows that using any one of these operations can create a wide gap in the complexity of certifying submodularity: while $f$ can be shown to be submodular using the lowest $t$ (namely $t=0$), $g$ requires the highest possible $t$ (namely $t=\lceil \frac{n+d-5}{2} \rceil=n-2$ when $d=n$) to certify submodularity. The proof of this result involves showing that these operations do not preserve the degree of the set functions involved: when $f$ is of degree $2$, $g$ can be of degree $n$. This phenomenon mimics certain convexity-preserving properties which do not translate to sos-convexity, for example, partial minimization or pointwise maxima. The issue here is that these operations, when applied to polynomials, do not return polynomials.

\subsection{A study of the gap between submodularity and $t$-sos submodularity} \label{subsec:gap.sos.submod.submod}

Our first result in this section shows that when the degree $d$ of a submodular set function $f$ is low (i.e., $d<4$), then we can certify submodularity of $f$ via $t$-sos submodularity for small $t$. When $d=4$, however, this stops being the case (see Proposition \ref{prop:CE.deg.4}). This is in line with the hardness result in Proposition~\ref{prop:np.hardness}. We then provide examples of functions that are $t$-sos submodular with known $t.$

\begin{proposition} \label{prop:low.degrees}
The following results hold:
\begin{itemize}
\item Any degree-0 or 1 set function is $0$-sos-submodular.
\item Any degree-2 submodular set function is $0$-sos-submodular.
\item Any degree-3 submodular set function is $1$-sos-submodular. 
\end{itemize}
\end{proposition}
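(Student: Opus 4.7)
The plan is to treat each case separately, since each reduces $-\partial^2 F/\partial x_i \partial x_j$ to a polynomial of very low degree in $x_{-i,j}$ and then invoke an elementary sum of squares decomposition modulo $I_2[x_{-i,j}]$.

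For degrees 0 and 1, the multilinear extension $F$ has degree at most 1, so every second mixed partial derivative $\partial^2 F/\partial x_i \partial x_j$ is identically zero. The zero polynomial is trivially $0$-sos modulo any ideal (take an empty sum of squares), so the first bullet is immediate with no submodularity hypothesis needed.

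For degree 2, write $F(x) = c + \sum_i c_i x_i + \sum_{i<j} c_{ij} x_i x_j$. Then $-\partial^2 F/\partial x_i \partial x_j = -c_{ij}$, a constant. By the second-order characterization of submodularity, Proposition~\ref{prop:submod.mle}$(iii')$, this constant is nonnegative, hence equals $(\sqrt{-c_{ij}})^2$, a single square of a degree-0 polynomial. So $f$ is $0$-sos submodular.

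For degree 3, collect the cubic coefficients as $c_{ijk}$ (for $i<j<k$) and observe that
\[
-\frac{\partial^2 F(x)}{\partial x_i \partial x_j} \;=\; -c_{ij} - \sum_{k \ne i,j} c_{ijk}\, x_k \;=:\; p_{ij}(x_{-i,j}),
\]
which is linear in $x_{-i,j}$. By Proposition~\ref{prop:submod.mle}$(iii')$ applied at arbitrary $x \in \{0,1\}^n$, the polynomial $p_{ij}$ is nonnegative on $\{0,1\}^{n-2}$. The key lemma is therefore: any linear polynomial $p(z) = b_0 + \sum_k b_k z_k$ that is nonnegative on $\{0,1\}^m$ is $1$-sos modulo $I_2[z]$. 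The plan for this lemma is to let $N = \{k : b_k < 0\}$, set $c := b_0 + \sum_{k \in N} b_k$, note that $c \ge 0$ (since the minimum of $p$ on the cube is attained at the indicator of $N$), and then use the rewriting
\[
p(z) \;=\; c \;+\; \sum_{k \in N} |b_k|(1 - z_k) \;+\; \sum_{k \notin N} b_k\, z_k.
\]
Each summand is a single square modulo $I_2[z]$: $c = (\sqrt{c})^2$, $b_k z_k \equiv (\sqrt{b_k}\, z_k)^2$ for $k \notin N$, and $|b_k|(1-z_k) \equiv (\sqrt{|b_k|}\,(1-z_k))^2$ because $(1-z_k)^2 = 1 - 2z_k + z_k^2 \equiv 1 - z_k \bmod{\langle z_k^2 - z_k\rangle}$. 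All squared polynomials have degree at most 1, so $p$ is $1$-sos modulo $I_2[z]$. Applying this with $z = x_{-i,j}$ to each $p_{ij}$ gives degree-3 $\Rightarrow$ $1$-sos submodular.

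The only step requiring any thought is the lemma on linear polynomials; the main obstacle is handling coefficients of mixed signs, and the trick $(1-z_k)^2 \equiv 1 - z_k \bmod{I_2[z]}$ lets us absorb the negative coefficients into squares using the slack $c$ guaranteed by nonnegativity on the cube. All remaining steps are direct substitutions into Definition~\ref{def:t-sos-submod}.
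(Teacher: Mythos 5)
Your proof is correct, and the first two bullets match the paper's argument essentially verbatim (second mixed partials are identically zero in degree $\leq 1$, and a nonnegative constant $C^{ij}=(\sqrt{C^{ij}})^2$ in degree $2$). Where you genuinely diverge is the degree-$3$ case. The paper invokes the structural characterization of cubic submodular functions due to Billionnet and Minoux --- an explicit normal form for the MLE with sign conditions on the coefficients, including $c_{ij}\geq\sum_{k\,:\,(i,j,k)\in IJK^-}c_{ijk}$ --- and reads off a sum of squares term by term from that form. You instead prove a small self-contained lemma: any affine polynomial $b_0+\sum_k b_k z_k$ that is nonnegative on $\{0,1\}^m$ is $1$-sos modulo $I_2[z]$, obtained by evaluating at the minimizer $1_N$ with $N=\{k:b_k<0\}$ to extract a nonnegative constant $c$, and then writing the polynomial as $c+\sum_{k\in N}|b_k|(1-z_k)+\sum_{k\notin N}b_k z_k$, each summand a single square via $z_k\equiv z_k^2$ and $(1-z_k)^2\equiv 1-z_k$. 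I checked the algebra: the rewrite is an identity and $c=p(1_N)=\min_{z\in\{0,1\}^m}p(z)\geq 0$ since the coordinates decouple, so the lemma holds. Your route is more elementary and avoids the external citation entirely; it also makes transparent why the resulting semidefinite program collapses to a linear program in the cubic case (the certificate is a diagonal combination of fixed squares with nonnegative weights), a fact the paper only asserts as a by-product of its proof. What the paper's route buys in exchange is an explicit parametrization of \emph{all} cubic submodular functions, which is reusable elsewhere, whereas your lemma certifies exactly the polynomial at hand. Either argument is a complete and valid proof of the proposition.
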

In particular, any quadratic function $F(x)=x^TQx+b^Tx+c$ where $Q$ has nonpositive off-diagonal terms is $0$-sos-submodular. The proof of this proposition (Appendix \ref{appendix:gap.sos.submod.submod}) further shows that in the degree-3 setting, one can reduce the semidefinite program under consideration to a linear program.

\begin{proposition} \label{prop:CE.deg.4}
Consider the degree-4 set function $f$ in $n$ variables whose MLE $F$ is given by
$$F(x)=\begin{cases} &\left(-2\sum_{1 \leq i<j\leq n-2}x_ix_j-\frac{n^2-4n+3}{4}\right) x_{n-1}x_{n}+(n-3)\sum_{i=1}^{n-2} x_i \cdot \left( x_{n-1}x_{n}-x_{n-1}-x_{n} \right),\text{ $n$ odd}\\
&\left(-2\sum_{1 \leq i<j\leq n-2}x_ix_j-\frac{(n-2)(n-4)}{4}\right) x_{n-1}x_{n}+(n-4)\sum_{i=1}^{n-2} x_i \cdot \left( x_{n-1}x_{n}-x_{n-1}-x_{n} \right),\text{ $n$ even} \end{cases}.$$
We have that $f$ is submodular (in fact, $\lceil\frac{n-2}{2} \rceil$-sos-submodular when $n$ is odd and $\lceil\frac{n-1}{2} \rceil$-sos-submodular when $n$ is even) but not $t$-sos-submodular for any $t \leq \left(\lceil\frac{n-2}{2}\rceil-1\right)$
\end{proposition}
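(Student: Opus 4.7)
The plan is to reduce the $t$-sos question to a single univariate SOS question, then prove matching bounds. For any pair $(i,j)$ with $\{i,j\}\neq\{n-1,n\}$, direct differentiation shows $-\partial^2 F/\partial x_i\partial x_j$ has nonnegative coefficients and is easily $2$-sos modulo $I_2[x_{-i,j}]$ via the idempotent identities $x_k x_\ell\equiv(x_k x_\ell)^2$ and $(1-x_k)\equiv(1-x_k)^2\mod I_2$. For $\{i,j\}=\{n-1,n\}$, using $2\sum_{1\leq i<j\leq n-2}x_i x_j\equiv u^2-u\mod I_2$ with $u=\sum_{i=1}^{n-2}x_i$, one computes
\[
-\frac{\partial^2 F}{\partial x_{n-1}\partial x_n}\equiv p(u)\mod I_2[x_1,\ldots,x_{n-2}],
\]
where $p(u)=(u-\tfrac{n-3}{2})(u-\tfrac{n-1}{2})$ for $n$ odd, and $p(u)=(u-\tfrac{n-4}{2})(u-\tfrac{n-2}{2})$ for $n$ even. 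Since $p\geq 0$ on the integer values attained on $\{0,1\}^{n-2}$, with equality on exactly two consecutive middle slices, $f$ is submodular, and everything reduces to the SOS degree of $p$.

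For the upper bound, the plan is to exhibit an explicit SOS certificate of degree $\lceil(n-2)/2\rceil$ for $n$ odd (resp.\ $\lceil(n-1)/2\rceil$ for $n$ even). In the smallest case $n=5$ one verifies $p\equiv 2(1-e_1+e_2)^2\mod I_2$, a single square of a degree-$2$ polynomial, where $e_k$ is the $k$-th elementary symmetric polynomial. For general $n$, the construction is a weighted combination of squares of analogous ``near-indicator'' polynomials of degree $\lceil(n-2)/2\rceil$, each supported on a symmetric pair of slices of $\{0,1\}^{n-2}$; the sum is verified to match $p$ slice by slice.

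For the lower bound---the main obstacle---suppose for contradiction $p\equiv\sum_r q_r^2\mod I_2$ with $\deg q_r\leq t=\lceil(n-2)/2\rceil-1$. Each $q_r$ then vanishes on the two zero slices of $p$. Under the change of variables $y_i=2x_i-1\in\{-1,1\}$, $4p$ becomes $(\sum_i y_i)^2-1$ for $n$ odd and $(\sum_i y_i+1)^2-1$ for $n$ even, i.e., the classical Boolean ``knapsack'' polynomial. The obstruction is produced via a symmetric pseudo-expectation $\tilde E$ on $\mathbb{R}[x_1,\ldots,x_{n-2}]_{\leq 2t}/I_2$ determined by signed slice weights $(\mu_k)_{k=0}^{n-2}$ satisfying (i)~$\tilde E[q^2]\geq 0$ for every $q$ of degree $\leq t$ (equivalently, certain Krawtchouk/Hahn-type transforms of $(\mu_k)$ are nonnegative through order $t$, so that the moment matrix---which block-diagonalizes under the Johnson scheme---is PSD), and (ii)~$\sum_k\mu_k p(k)<0$ (achieved by placing positive mass on the two zero slices and small, carefully tuned negative mass on adjacent positive-value slices). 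Any SOS representation would then force $0\leq\sum_r\tilde E[q_r^2]=\tilde E[p]<0$, a contradiction.

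The hard part is the lower bound: constructing the pseudo-expectation and verifying the block-PSD condition at the critical level $t=\lceil(n-2)/2\rceil-1$ requires precise sign control over Krawtchouk polynomial transforms of $(\mu_k)$, and is essentially a specialization of Grigoriev's classical SOS lower bound for Boolean knapsack to the polynomial $p$ above. By contrast, the first-step reduction and the upper-bound certificate construction are direct calculations.
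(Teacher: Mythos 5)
Your reduction is exactly the one the paper uses: every pair $(i,j)\neq(n-1,n)$ yields a second derivative that is a nonnegative combination of products of idempotents (hence $2$-sos), and the whole question collapses onto $-\partial^2F/\partial x_{n-1}\partial x_n$, which in $u=\sum_{i=1}^{n-2}x_i$ is $(u-\tfrac{n-3}{2})(u-\tfrac{n-1}{2})$ for $n$ odd and $(u-\tfrac{n-4}{2})(u-\tfrac{n-2}{2})$ for $n$ even --- your root computation and the $\pm1$ change of variables to $(\sum_i y_i)^2-1$ (resp.\ $(\sum_i y_i+1)^2-1$) are correct, and your $n=5$ certificate $2(1-e_1+e_2)^2$ checks out on every slice. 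The paper does precisely this and then disposes of the remaining univariate question by citation: its Lemma~\ref{lem:CE} identifies this quadratic with Laurent's max-cut instance $\sum_{i<j}y_iy_j+\lfloor m/2\rfloor$ in $m=n-2$ variables and imports the upper bound from \citep{fawzi2015sparse,sakaue2016exact} and the lower bound from \citep[Section 3]{laurent2003lower}.

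The gap is that you propose to reprove both of those imported facts and do not actually do so. For the upper bound, the general-$n$ ``weighted combination of near-indicator squares'' is asserted to match $p$ slice by slice but never constructed; only $n=5$ is verified. More seriously, the lower bound --- which you correctly flag as the crux --- is left entirely as a plan: you describe the pseudo-expectation $\tilde E$ by signed slice weights $(\mu_k)$ and state the two conditions it must satisfy, but the existence of weights meeting both (block-PSD of the moment matrix at level $t=\lceil(n-2)/2\rceil-1$ together with $\sum_k\mu_k\,p(k)<0$) is exactly the content of Laurent's theorem, and ``precise sign control over Krawtchouk transforms'' is where all the work lives; nothing in your write-up establishes it. As written the argument is circular at that point: the obstruction is assumed to exist because the classical result says one does. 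The fix is either to cite \citep{laurent2003lower} (and \citep{fawzi2015sparse,sakaue2016exact} for the upper bound) directly, as the paper does, or to actually carry out the Krawtchouk computation --- the latter is a substantial self-contained argument, not a routine verification. One minor point: for the mixed pairs $(i,n-1)$ and $(i,n)$ the derivative does not literally have nonnegative coefficients; you need the regrouping $(n-3)(1-x_n)+2x_n\sum_{j\neq i}x_j$ before the idempotent identities apply, which your parenthetical suggests you intend but should state.
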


The proofs of these propositions can be found in Appendix \ref{appendix:gap.sos.submod.submod}. Proposition \ref{prop:low.degrees} confirms that checking submodularity of a set function of degree $\leq 3$ can be done efficiently. Proposition~\ref{prop:CE.deg.4} shows the existence of a degree-4 submodular polynomial which requires, in light of Proposition~\ref{prop:bounds.on.t}, the highest possible $t$ to certify submodularity via $t$-sos submodularity. As $t=O(n)$, the SDP which needs to be solved in this case is of size exponential in $n$, as expected given Proposition \ref{prop:np.hardness}.

We now provide example of functions that are $t$-sos submodular. The first example is a generalization of cut functions to hypergraphs. Recall that a hypergraph is a generalization of a graph where edges can connect more than 2 nodes. In a similar way to the graph setting, if $\mathcal{H}=(V,E)$ is a hypergraph with nonnegative hyperedge weights $w_e$, we define the (weighted) cut function of $\mathcal{H}$ as the function that maps $S\subseteq V$ to the sum of the weights of the hyperedges that contain one node from $S$ and one node from $V\backslash S.$

\begin{proposition} \label{prop:hypercut}
Let $\mathcal{H}=(V,E)$ be a hypergraph with nonnegative hyperedge weights $w_e.$ Let $\max_{e \in E}|e|=t$, i.e., $t$ is the edge size. The (weighted) cut function of $\mathcal{H}$ is a $2 \left \lfloor t/2-1\right \rfloor$-sos-submodular function.
\end{proposition}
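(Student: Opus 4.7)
The plan is to reduce to a single hyperedge via Proposition~\ref{prop:ops.maintain.sos.submod} (closure under nonnegative weighted sums) and then exploit that the relevant mixed partial is a Boolean-valued multilinear polynomial. Concretely, I would write $f=\sum_{e\in E}w_e f_e$, where $f_e$ is the cut indicator of the single hyperedge $e$, and show that each $f_e$ is $2\lfloor |e|/2-1\rfloor$-sos submodular; the common bound $2\lfloor t/2-1\rfloor$ then follows from $|e|\le t$ and the nesting of the hierarchy.

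For a single hyperedge $e$, the MLE is $F_e(x)=1-\prod_{i\in e}x_i-\prod_{i\in e}(1-x_i)$, and a direct computation gives, for $i\neq j$, $-\partial^2 F_e/\partial x_i\partial x_j=0$ when $\{i,j\}\not\subseteq e$ (trivially sos), and
\[
-\frac{\partial^2 F_e}{\partial x_i\partial x_j}=p_A(x_A)\;:=\;\prod_{k\in A}x_k+\prod_{k\in A}(1-x_k)
\]
when $\{i,j\}\subseteq e$, with $A:=e\setminus\{i,j\}$ and $|A|=|e|-2$. On $\{0,1\}^{|A|}$, $p_A$ is the indicator of ``all coordinates in $A$ are equal,'' hence takes values in $\{0,1\}$. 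Any multilinear polynomial that is Boolean-valued on $\{0,1\}^{|A|}$ satisfies $p_A\equiv p_A^2\mod{I_2[x_A]}$, and since $I_2[x_A]\subseteq I_2[x_{-i,j}]$, this writes $p_A$ as a single square modulo $I_2[x_{-i,j}]$ of a polynomial of degree exactly $\deg(p_A)$.

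It only remains to compute $\deg(p_A)$. Expanding the second product, the coefficient of the top monomial $\prod_{k\in A}x_k$ in $p_A$ is $1+(-1)^{|A|}$, which equals $2$ when $|A|$ is even and $0$ when $|A|$ is odd; on the other hand, the coefficient of the monomial indexed by any proper $T\subsetneq A$ is $(-1)^{|T|}\neq 0$. Hence $\deg(p_A)=|A|$ when $|A|$ is even and $\deg(p_A)=|A|-1$ when $|A|$ is odd, i.e., $\deg(p_A)=2\lfloor |A|/2\rfloor=2\lfloor |e|/2-1\rfloor\le 2\lfloor t/2-1\rfloor$, as required. The one subtle point is this parity: without noticing the cancellation of the leading monomial in the odd case, one would only get sos degree $|A|$ rather than $|A|-1$, and the claimed bound would be off by one precisely when $|e|$ is odd.
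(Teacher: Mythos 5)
Your proof is correct and follows essentially the same route as the paper's: reduce to the per-hyperedge term $\prod_{k\in A}x_k+\prod_{k\in A}(1-x_k)$, use that a Boolean-valued multilinear polynomial is congruent to its own square mod $I_2$, and count its degree with the parity cancellation in the odd case (which the paper merely asserts and you make explicit via the coefficient $1+(-1)^{|A|}$). The only quibble, shared with the paper's own proof, is the degenerate case $A=\emptyset$ (i.e.\ $|e|=2$), where the term equals the constant $2$ rather than a $0/1$-valued function, but it is still a square $(\sqrt{2})^2$ of degree $0$, so the bound holds.
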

When the edge size is $2$ (i.e., when $\mathcal{H}$ is a graph) we recover the result that the cut function is $0$-sos-submodular. Another canonical example of a submodular function are coverage functions. We show here that such functions are $(t-2)$-sos-submodular for a problem-specific $t$.

\begin{proposition} \label{prop:coverage}
Let $m,n \in \mathbb{N}$ and let $A_1,\ldots,A_n \subseteq \{1,\ldots,m\}$. Consider the coverage function $f(S)=|\cup_{i \in S} A_i|.$ Let $t=\max_{\ell=1,\ldots,m} |\{k~|~ \ell \in A_k\}|$, that is, $t$ is the maximum number of sets any element of $\{1,\ldots,m\}$ can belong to. We have that $f$ is $(t-2)$-sos-submodular.
\end{proposition}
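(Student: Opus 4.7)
The plan is to decompose the coverage function into a sum of indicator coverages, one per element of the ground set $\{1,\dots,m\}$, and show each indicator coverage is $(t-2)$-sos-submodular. Since $(t-2)$-sos-submodularity is preserved under nonnegative weighted sums (Proposition~\ref{prop:ops.maintain.sos.submod}), the result will follow.

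For the first step, for each $\ell \in \{1,\dots,m\}$, let $K_\ell \mathrel{\mathop{:}}= \{k : \ell \in A_k\}$, so $|K_\ell|\le t$ by definition. I would write
$$f(S) \;=\; \sum_{\ell=1}^m \mathbf{1}[\,S \cap K_\ell \neq \emptyset\,],$$
which shows that the MLE of $f$ decomposes as $F(x)=\sum_{\ell=1}^m F_\ell(x)$ with $F_\ell(x) \mathrel{\mathop{:}}= 1 - \prod_{k \in K_\ell}(1-x_k)$ (the right-hand side is multilinear and matches $f_\ell$ on $\{0,1\}^n$, hence is the MLE of $f_\ell$ by uniqueness). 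It therefore suffices to show that each $F_\ell$ is $(t-2)$-sos-submodular.

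For the second step I would compute the mixed partials of $F_\ell$ directly. For $i,j\in\{1,\dots,n\}$ with $i\neq j$, if either $i\notin K_\ell$ or $j\notin K_\ell$ then $\partial^2 F_\ell/\partial x_i\partial x_j = 0$, which is trivially $(t-2)$-sos modulo any ideal. If both $i,j\in K_\ell$, a direct differentiation gives
$$-\frac{\partial^2 F_\ell(x)}{\partial x_i\,\partial x_j} \;=\; \prod_{k \in K_\ell \setminus \{i,j\}} (1-x_k).$$
Now the key observation: for every $k\neq i,j$, the generator $x_k^2-x_k\in I_2[x_{-i,j}]$ gives the idempotence identity $(1-x_k)^2 \equiv (1-x_k) \bmod I_2[x_{-i,j}]$. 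Applying this to every factor in the product yields
$$\prod_{k \in K_\ell \setminus \{i,j\}} (1-x_k) \;\equiv\; \prod_{k \in K_\ell \setminus \{i,j\}} (1-x_k)^2 \;=\; \Bigl(\prod_{k \in K_\ell \setminus \{i,j\}} (1-x_k)\Bigr)^{\!2} \bmod I_2[x_{-i,j}],$$
i.e., a single square of a polynomial in $x_{-i,j}$ of degree $|K_\ell|-2 \leq t-2$. Hence $-\partial^2 F_\ell/\partial x_i\partial x_j$ is $(t-2)$-sos modulo $I_2[x_{-i,j}]$, so $F_\ell$ is $(t-2)$-sos-submodular per Definition~\ref{def:t-sos-submod}.

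Finally, summing over $\ell$ with unit weights and invoking the nonnegative-weighted-sums part of Proposition~\ref{prop:ops.maintain.sos.submod} concludes the argument. There is no real obstacle here beyond bookkeeping: the only subtlety is that the idempotence rewriting must use the ideal $I_2[x_{-i,j}]$ (which excludes precisely the two variables $x_i,x_j$ that have already been differentiated away), but this is exactly how the definition is set up, so the argument goes through cleanly.
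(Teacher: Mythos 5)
Your proposal is correct and follows essentially the same route as the paper: the paper likewise writes $F(x)=\sum_{\ell=1}^m\bigl(1-\prod_{k:\ell\in A_k}(1-x_k)\bigr)$, computes the mixed partials as products of the factors $(1-x_k)$ of degree at most $t-2$, and certifies each term as a single square because it is $0/1$-valued on the hypercube (equivalent to your factor-by-factor idempotence modulo $I_2[x_{-i,j}]$). The only cosmetic difference is that you sum per-element pieces via the nonnegative-weighted-sums preservation result, whereas the paper differentiates the full sum directly.
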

The next proposition is the counterpart, for $t$-sos submodularity, of the fact that $f(S)=\phi(|S|)$ is a submodular function when $\phi$ is concave.
\begin{proposition} \label{prop:concave}
Let $\Omega=\{1,\ldots,n\}$ and let $\phi:\mathbb{R}\rightarrow \mathbb{R}$ be a concave polynomial of degree $2d$ with $d \leq n-1,$ then $f:2^{\Omega} \rightarrow \mathbb{R}$ defined by $f(S)=\phi(|S|)$ for $S\subseteq \Omega$ is $d$-sos-submodular.
\end{proposition}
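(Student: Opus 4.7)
The plan is to reduce $d$-sos submodularity of $f$ to a univariate sum-of-squares question and apply the classical Luk\'acs--Markov theorem: every nonnegative univariate polynomial of degree $2m$ is a sum of two squares of polynomials of degree at most $m$. The fact that $f(S)$ depends only on $|S|$ is what makes this reduction possible, by forcing $\partial^2 F/\partial x_i\partial x_j$ to be a symmetric function of the remaining variables.

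Fix $i\neq j$. Since $F$ is multilinear, there exist polynomials $A,B,C,D\in\mathbb{R}[x_{-i,j}]$ with $F(x)=A+Bx_i+Cx_j+Dx_ix_j$ and $D=\partial^2 F/\partial x_i\partial x_j$. Evaluating at $x_{-i,j}=1_S$ for $S\subseteq\Omega\setminus\{i,j\}$ and using $F(1_T)=\phi(|T|)$ gives the second finite difference $D(1_S)=\phi(|S|+2)-2\phi(|S|+1)+\phi(|S|)$. Define the univariate polynomial $g(u)\mathrel{\mathop{:}}= 2\phi(u+1)-\phi(u+2)-\phi(u)$. The two polynomials $-D(x_{-i,j})$ and $g(\sum_{k\neq i,j}x_k)$ in $\mathbb{R}[x_{-i,j}]$ agree at every point of $\{0,1\}^{n-2}=\mathcal{V}_{\mathbb{R}}(I_2[x_{-i,j}])$, hence they are congruent modulo $I_2[x_{-i,j}]$. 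A direct expansion of $\phi(u+c)$ for $c\in\{0,1,2\}$ shows that the $u^{2d}$ and $u^{2d-1}$ coefficients of $\phi(u+2)-2\phi(u+1)+\phi(u)$ cancel, so $\deg(g)\leq 2d-2$. Midpoint concavity of $\phi$ applied at $u$ and $u+2$ yields $\phi(u+1)\geq\tfrac12(\phi(u)+\phi(u+2))$, i.e., $g(u)\geq 0$ on all of $\mathbb{R}$.

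Applying Luk\'acs--Markov to $g$ produces a decomposition $g(u)=h_1(u)^2+h_2(u)^2$ with $h_1,h_2\in\mathbb{R}[u]$ of degree at most $d-1$. Substituting $u=\sum_{k\neq i,j}x_k$ yields
\[
-\frac{\partial^2 F(x)}{\partial x_i\partial x_j}\;\equiv\; h_1\!\left(\sum_{k\neq i,j}x_k\right)^{\!2}+h_2\!\left(\sum_{k\neq i,j}x_k\right)^{\!2} \pmod{I_2[x_{-i,j}]},
\]
and each summand is the square of a polynomial in $x_{-i,j}$ of degree at most $d-1\leq d$. Since $i,j$ were arbitrary, $f$ is $d$-sos submodular. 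The main non-routine ingredient is the Luk\'acs--Markov theorem; the only computational subtlety is keeping degrees aligned, namely that $\phi$ of degree $2d$ produces a second-difference polynomial $g$ of degree at most $2d-2$, which is precisely what allows the univariate sos certificates to have degree at most $d-1$ and therefore fit within the $d$-sos envelope.
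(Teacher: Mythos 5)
Your proof is correct, and it takes a route that differs from the paper's in the key lemma it invokes. Both arguments exploit the symmetry of $f$ to reduce $-\partial^2 F/\partial x_i\partial x_j$ modulo $I_2[x_{-i,j}]$ to the negated second finite difference $2\phi(u+1)-\phi(u+2)-\phi(u)$ evaluated at $u=\sum_{k\neq i,j}x_k$; your verification of this congruence (agreement on $\{0,1\}^{n-2}$ plus radicality of $I_2$) is fine. Where you diverge: the paper keeps the two arguments of $\phi$ as separate variables, observes that $-\phi''\geq 0$ is a univariate sos, and then invokes the bivariate sos-concavity result of Ahmadi--Parrilo to write the midpoint gap $\phi(\tfrac{s+t}{2})-\tfrac12\phi(s)-\tfrac12\phi(t)$ as $\sum_r q_r(s,t)^2$ with $\deg q_r\leq d$, substituting $s=u+2$, $t=u$ only at the end. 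You substitute first, obtaining a single univariate polynomial $g$ that is nonnegative on all of $\mathbb{R}$ and of degree exactly $2d-2$ (your coefficient cancellation is correct), and then apply the classical fact that such a polynomial is a sum of two squares of degree at most $d-1$. Your version is more elementary and self-contained, avoiding the sos-concavity machinery entirely, and it actually yields the slightly stronger conclusion that $f$ is $(d-1)$-sos-submodular, which of course implies the stated $d$-sos-submodularity. The only cosmetic quibble is the attribution: the global statement you need (a polynomial nonnegative on $\mathbb{R}$ is a sum of two squares of half its degree) is the classical univariate representation theorem rather than the Luk\'acs--Markov theorem, which concerns nonnegativity on an interval; the fact itself is standard and correctly applied.
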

This proposition can be readily extended to $f(S)=\phi(m(S))$, where $m$ is a nonnegative modular function. One can similarly show that if $\phi_{ij}:\mathbb{R} \rightarrow \mathbb{R}$ is a convex \emph{function} for all $1\leq i<j \leq n,$ then $f(S)=\sum_{1 \leq i<j\leq n} \phi_{ij}(1_{i \in S}-1_{j \in S})$ is $0$-sos-submodular.
All of these proofs can be found in Appendix~\ref{appendix:gap.sos.submod.submod}.

\section{Applications} \label{sec:applications}

In this section, we propose three different applications of sos submodularity: in Section \ref{subsec:regression}, we consider \emph{submodular regression}; in Section \ref{subsec:approx.submod},  \emph{approximate submodular maximization}; and in Section \ref{subsec:diff.submod}, optimization problems involving \emph{difference of submodular functions}. \revision{Information relating to implementation details can be found in Appendix \ref{appendix:numeric} and all code can be found \href{https://github.com/SOSSubmodularity/SOSSubmodularity_Revision}{here}.}


\subsection{Submodular regression}\label{subsec:regression}

Given $m$ data points $(x_i,y_i) \in \{0,1\}^{n} \times \mathbb{R},$ we wish to fit a submodular function $f:2^{\Omega} \rightarrow \mathbb{R}$ to these data points so as to minimize some loss function. This is shape-constrained regression, where the shape constraint of the function is submodularity, a very natural prior along with monotonicity and convexity; see \citep{stobbe2012learning,balcan2011learning} for some application examples. Of course, given the hardness result in Section \ref{subsec:np.hard.submod}, this is a hard problem when $\deg(f)\geq 4$. We thus propose to fit instead degree-$k$ sos-submodular functions to the data. This problem is tractable and gives rise to a submodular regressor, as encapsulated in the following proposition. 

\begin{proposition}
Let $(x_i,y_i)_{i=1,\ldots,m}\in \{0,1\}^n \times \mathbb{R}$, $k \in \mathbb{N}$. For $\lceil \frac{k-2}{2} \rceil \leq t \leq \lceil\frac{n+k-5}{2} \rceil$, the program
\begin{align}\label{eq:t.sos.regression}
&\min_{F \in \mathbb{R}[x], \deg(F)\leq k} \sum_{i=1}^n (y_i-F(x_i))^2\\
&\text{s.t. } F \text{ is $t$-sos submodular}\notag
\end{align}
\revision{is a semidefinite program to solve with $\frac{n(n-1)}{2}$ semidefinite constraints of size $\sum_{t'=0}^t \binom{n-2}{t'}$.} The\footnote{We place ourselves in a regime where $m$ is large enough so that the problem is guaranteed to have a unique solution.} optimal solution $F^*_{k,t}$ is guaranteed to be submodular.
\end{proposition}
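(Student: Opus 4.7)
The claim bundles two assertions---that \eqref{eq:t.sos.regression} is a semidefinite program, and that its minimizer is submodular---and each reduces cleanly to a result already established. The plan is to reformulate the feasibility set using Proposition~\ref{prop:tractability.sos.submod}, lift the quadratic objective to an LMI via Schur complement, and then invoke Proposition~\ref{prop:sufficiency.sos.submod} on the minimizer.

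First, since the data points lie in $\{0,1\}^n$ and $t$-sos submodularity depends on $F$ only through its residue modulo $I_2[x]$, I would restrict without loss of generality to multilinear $F$ with $\deg F \le k$; the decision variable is then a coefficient vector $c \in \mathbb{R}^N$ with $N = \sum_{j=0}^k \binom{n}{j}$. By Proposition~\ref{prop:tractability.sos.submod}, the $t$-sos submodularity constraint is expressible as $n(n-1)/2$ PSD constraints on Gram matrices $Q_{ij} \succeq 0$ of size $\sum_{j=0}^t \binom{n-2}{j}$, coupled to $c$ by linear equalities matching coefficients in $-\partial^2 F/\partial x_i\partial x_j \equiv z_t(x_{-i,j})^\top Q_{ij}\, z_t(x_{-i,j}) \pmod{I_2[x_{-i,j}]}$. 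Because differentiation, reduction modulo $I_2[x_{-i,j}]$, and coefficient extraction are each linear in $c$, these couplings are linear in $(c, \{Q_{ij}\})$.

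Second, the map $c \mapsto F(x_i)$ is linear, so $\sum_{i=1}^m (y_i - F(x_i))^2 = \|y - Xc\|_2^2$ for a data matrix $X \in \mathbb{R}^{m \times N}$, and the objective is a convex quadratic in $c$. Introducing an epigraph variable $\tau$ and applying the Schur complement,
\[
\tau \;\ge\; \|y - Xc\|_2^2 \;\Longleftrightarrow\; \begin{pmatrix} I_m & y - Xc \\ (y - Xc)^\top & \tau \end{pmatrix} \succeq 0,
\]
so minimizing $\tau$ subject to this LMI together with the PSD constraints from the previous paragraph is an SDP equivalent to \eqref{eq:t.sos.regression}.

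Finally, by feasibility $F^*_{k,t}$ is $t$-sos submodular, and Proposition~\ref{prop:sufficiency.sos.submod} then forces the induced set function to be submodular; the range $\lceil (k-2)/2 \rceil \le t \le \lceil (n+k-5)/2 \rceil$ is the natural one dictated by Proposition~\ref{prop:bounds.on.t}, ensuring the feasibility set is expressive at degree $k$ without asking for more than is needed to capture all submodular degree-$k$ functions. There is no substantive mathematical obstacle here; the main point to handle carefully is the bookkeeping in the first paragraph---verifying that differentiation and residue-taking modulo $I_2[x_{-i,j}]$ act linearly on the coefficient vector $c$, so that the PSD constraints and the quadratic objective are truly coupled over a common finite-dimensional coefficient space.
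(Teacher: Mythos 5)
Your proposal is correct and follows exactly the route the paper intends: the paper omits the proof, stating that it follows directly from Propositions~\ref{prop:sufficiency.sos.submod} and \ref{prop:tractability.sos.submod}, which are precisely the two results you invoke. Your additional details (the epigraph/Schur-complement reformulation of the least-squares objective and the linearity of differentiation, reduction modulo $I_2[x_{-i,j}]$, and coefficient extraction in $c$) are the standard bookkeeping the paper leaves implicit, and they are handled correctly.
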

The proof of this proposition is omitted as it follows directly from  Propositions \ref{prop:sufficiency.sos.submod} and \ref{prop:tractability.sos.submod}. To perform submodular regression via this approach, the user need only specify the degree $k$ of the regressor and the degree $t$ of the sum of squares constraint. Both of these degrees can be viewed as regularization hyperparameters and can be set using, e.g., cross-validation. Obtaining $F^*_{k,t}$ is tractable as it involves solving a convex optimization problem, and $F^*_{k,t}$ is guaranteed to be submodular. Furthermore, evaluation of $F^*_{k,t}$ on unseen data can be done very quickly, via point evaluations of a polynomial. Finally, fitting low-degree polynomials, as opposed to higher-degree polynomials, to the data may be of value in some applications. For example, Shapley values can be computed in polynomial time in this setting \citep{grabisch2000equivalent}.

Existing literature on submodular regression can be grouped into two categories. The first category---which is not particularly relevant to us---is theoretical and focuses on understanding how hard it is to learn a submodular function from a polynomial number of samples, under different learning models \citep{goemans2009approximating,balcan2011learning,balcan2018submodular,feldman2013representation,feldman2016optimal}. The second category is practical and aims to develop algorithms for fitting submodular functions to data, similarly to what is done here. There are two main lines of work here. One involves fitting mixtures of submodular ``shells'' to data; see, e.g., \citep{sipos2012large,lin2012learning,tschiatschek2014learning}. This requires significant application knowledge as the user needs to hand-pick the shells to consider, and has consequently mostly been deployed for summarization applications. The other line of work involves fitting neural networks constrained to be submodular to the data \citep{dolhansky2016deep,de2022neural}. As \cite{de2022neural} mention, the approach proposed by \cite{dolhansky2016deep} still requires a consequent amount of user input to choose the activation functions. In contrast, FlexSubNet, the approach proposed by \citep{de2022neural}, requires setting a handful of hyperparameters only, though it is a higher number than what is needed for our method. Furthermore, fitting FlexSubNet to data is a nonconvex problem, unlike our approach, which leads to some variability in the fitting, depending on, e.g., the initialization of the algorithm used to do so.



\begin{example}
We illustrate the effectiveness of $t$-sos-submodular regression using the synthetic data generation procedure outlined in \citet{de2022neural}. We consider a submodular function $$F(S) = \log\left(\sum_{i \in S} 1^\top z_i\right)$$ where $z_i \sim \text{unif}[0,1]^{10},\ i = 1, \ldots, n$. We set $n=15$, and sample 2000 pairs $(S, F(S))$ to form a dataset, which we split 50\%:25\%:25\% split for training, validation, and testing respectively. Gaussian noise is added to the training labels, with standard deviation equal to the empirical standard deviation of the noiseless labels scaled by $\sigma$. To show the effects of noise, we vary $\sigma \in \{0, 0.05, 0.1, 0.15\}$. We then fit four different functions to this data: $(i)$ a $t$-sos-submodular degree-$k$ regressor for varying $(k,t)$, i.e., a solution to \eqref{eq:t.sos.regression}; $(ii)$ a degree-$k$ polynomial regressor for varying $k$, i.e., a solution to \eqref{eq:t.sos.regression} without the sos submodularity constraints; $(iii)$ FlexSubNet from \citep{de2022neural}, described above; $(iv)$ the degree-$k$ polynomial regressor given in \citep{stobbe2012learning} for varying $k$. This last regressor is somewhat of an intermediate between the regressors in $(i)$ and $(ii)$. It is obtained by fitting a degree-$k$ polynomial regressor to data with constraints corresponding to necessary conditions for submodularity. It is shown in the paper that when $k=2$ or $k=3,$ these conditions are also sufficient, and so this approach coincides with the sos approach for those degrees---though they differ when $k \geq 4.$ It is quite clear, from this list, that our sos approach to submodular regression does not have a natural comparator. Indeed, the regressors obtained in $(ii)$ and $(iv)$ are not submodular in general, though they can be obtained by solving convex programs, whereas the regressor obtained in $(iii)$ is submodular, but is obtained by solving a nonconvex program. This is in contrast to convex regression where sum of squares approaches have a very natural comparator: the convex least squares regressor \citep{boyd2004convex,curmei2025shape}, which is guaranteed to be convex and can be obtained by solving a quadratic program. Our rationale in choosing $(ii)-(iv)$ as comparators thus requires some explanation: we elect to compare our approach against $(ii)$ and $(iv)$ as they all involve polynomial regression; we further choose $(iii)$, as it is the current state-of-the-art for submodular regression. Full details regarding the implementation and additional experiments which vary the functional form of $F$ can be found in Appendix \ref{appendix:t.sos.regression}.

\minorrevision{We present the results of our simulation in Figure \ref{fig:regression}. We plot there the test error (i.e., the average test root mean squared error or RMSE) for each method, averaged over five randomly generated problem instances, as a function of the noise level $\sigma$. We remark that the $t$-sos-submodular approach for degree $k=3$ and $k=4$ leads to significantly lower test error than other methods overall, with $k=4$ providing the best fit. Within the $k=4$ setting, varying $t$ does not lead to large performance differences for the $t$-sos-submodular approach (in fact, increasing $t$ leads to a slight deterioration), suggesting that small values of $t$ already capture most of the structure present in the data.

The good performance of the $t$-sos-submodular approach can be interpreted through a bias--variance lens. The true data-generating function is submodular, and the $t$-sos-submodular constraint enforces this structure on the fitted regressor. This restriction may introduce some bias, since we fit a low-degree polynomial to the data rather than the true logarithmic function. However, it may also reduce variance by restricting the set of admissible fitted functions: the $t$-sos-submodular approach cannot fit the training data using functions that violate submodularity. When this reduction in variance outweighs the increase in bias, the resulting test error decreases; this is consistent with what we observe in Figure \ref{fig:regression}.

This interpretation also helps explain the comparison between the three polynomial-based regressors, that is, regressors $(i), (ii),$ and $(iv).$  Since all three approaches fit polynomial regressors of the same degree, their approximation biases should be comparable for a fixed value of $k$, but their variances may differ substantially. The $t$-sos-submodular regressor is expected to have the lowest variance, since it restricts the fitted polynomial to be submodular; the necessary-condition-based regressor in \citep{stobbe2012learning} allows a larger class of degree-$k$ polynomials and hence may have larger variance; and this effect will likely be strongest for the unconstrained polynomial regressor, as it allows the largest class. This seems to explain the performances we observe in Figure~\ref{fig:regression} for degree $k=4$. At this degree, the necessary conditions for submodularity no longer coincide with submodularity itself, and hence the three polynomial-based approaches impose genuinely different levels of structure. It is also a degree at which overfitting can begin to appear. The observed ordering of the test errors is consistent with the bias--variance interpretation: the $t$-sos-submodular approach performs best, while the necessary-condition-based regressor behaves more similarly to unconstrained polynomial regression.

The comparison with FlexSubNet is less direct, since FlexSubNet also enforces submodularity but does so through a non-polynomial parametrization. In Figure \ref{fig:regression}, its performance does not deteriorate as noise is added to the data, which illustrates the value of submodularity as a structural constraint. However, its performance is poor on this instance. It fares better on some other functional forms of the data (see Appendix \ref{appendix:t.sos.regression}) but, overall, it performs inconsistently and is always dominated by the $t$-sos-submodular approach. We surmise that this may be due to the large number of hyperparameters of FlexSubNet, which make it a highly expressive class of functions and may lead to overfitting on medium-sized data samples. This is particularly interesting as FlexSubNet leverages additional structural properties of $F$ for fitting (e.g., monotonicity), unlike the $t$-sos-submodular approach.}

\begin{figure}[]
    \centering    \includegraphics[width=0.7\linewidth]{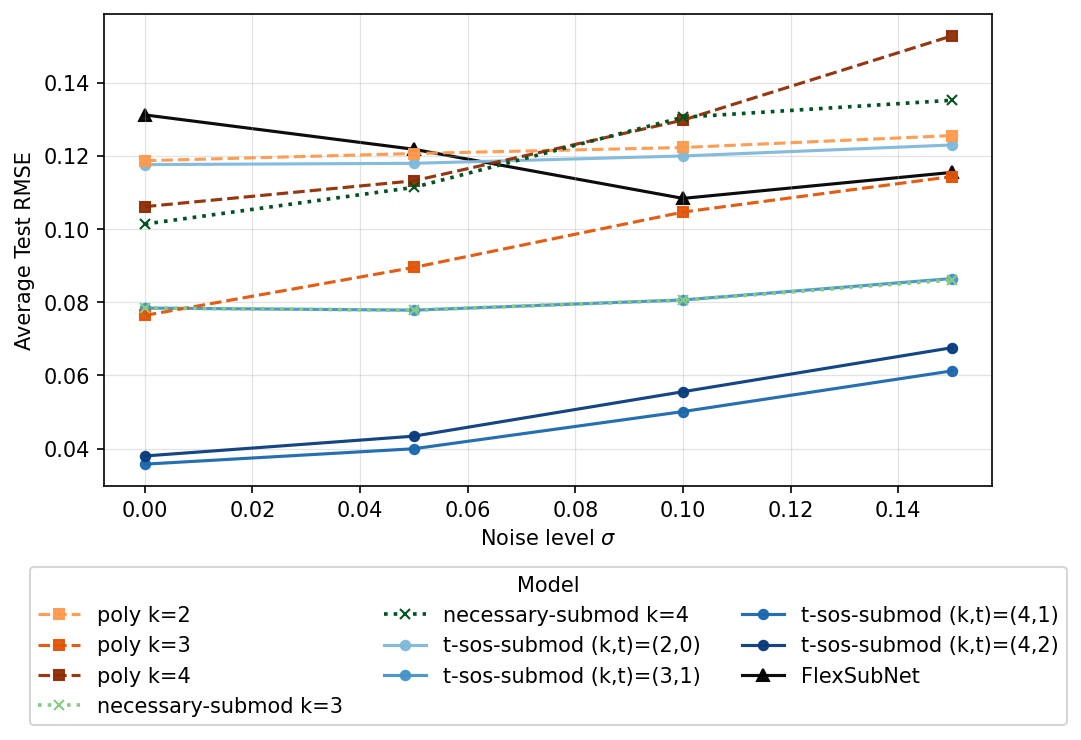}
    \caption{Comparison of root mean squared test error for various methods, fitted to noisy samples of submodular log functions.}
    \label{fig:regression}
\end{figure}
\end{example}

\subsection{Approximate submodular maximization} \label{subsec:approx.submod}

As mentioned in the introduction, it is well known that maximization of a non-decreasing submodular function subject to a cardinality constraint is NP-hard, but that a $(1-e^{-1})$-approximation can be obtained using the greedy algorithm \citep{nemhauser1978analysis}. Interestingly, these guarantees remain somewhat valid for \emph{approximately submodular} non-decreasing functions as shown by \cite{das2018approximate} and \cite{bian2017guarantees}. We make this statement more precise now. We consider a nonnegative monotone set function $f$, that is a function $f$ for which $f(S) \leq f(T)$ for any $S\subseteq T$ and for which $f(S) \geq 0,\forall S.$ For functions such as these, one can define the notion of \emph{submodularity ratio} \citep{das2018approximate}.

\begin{definition}[Definition 2, \cite{das2018approximate}]
The submodularity ratio of a nonnegative monotone function $f$ with respect to a set $U$ and a parameter $k \geq 1$ is:
$$\gamma_{U,k}(f)=\min_{\{L \subseteq U, S~|~|S| \leq k, S\cap L=\emptyset\}} \frac{\sum_{x\in S} (f(L\cup \{x\})-f(L))}{f(L\cup S)-f(L)}.$$
\end{definition}

It can be shown that $f$ is submodular if and only if $\gamma_{U,k} \geq 1$ for all $U$ and $k$ \citep[Proposition 3]{das2018approximate}. If we let $\gamma^*(f)=\min_{U\subseteq \Omega,k} \gamma_{U,k \geq 1}(f)$, then it can also be shown that the greedy algorithm recovers a $(1-e^{-\gamma^*(f)})$-approximation. Computation of $\gamma^*(f)$ is discussed at length in \citep[Remark 4]{das2018approximate}. In particular, it is known that $\gamma^*(f)$ is NP-hard to compute and it is acknowledged that ``typically, rather than computing the submodularity ratio on a given instance, one would use problem-specific insights to derive a priori lower bounds on the submodularity ratio in terms of quantities that are easier to compute exactly or approximately.'' We propose here a tractable and systematic way of obtaining good-quality lower bounds on $\gamma^*(f)$ using $t$-sos submodularity. This approach has commonalities with other computer-assisted worst-case analyses of optimization algorithms relying on semidefinite programming; see, e.g. \citep{goujaud2024pepit} and related work. We describe the approach in more detail.

\begin{proposition} \label{prop:approx.submod}
Let $f:2^{\Omega} \rightarrow \mathbb{R}$ be a nonnegative monotone set function and let $F$ be its multilinear extension. Let $\gamma$ be the largest scalar such that
$$\sum_{i=1}^n (y_i-x_i)\frac{\partial F(x)}{\partial x_i}+\gamma(F(x)-F(y)) \geq 0, \forall x,y \in \{0,1\}^n, x\leq y.$$
Then $\gamma=\gamma^*(f).$
\end{proposition}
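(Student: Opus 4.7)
The plan is to establish a bijective correspondence between pairs $(x,y) \in \{0,1\}^n$ with $x \leq y$ and pairs $(L,S)$ of disjoint subsets of $\Omega$, and to verify that under this correspondence the linear inequality appearing in the proposition reduces exactly to the ratio inequality inside the definition of $\gamma_{U,k}(f)$. The correspondence is the obvious one: given $(x,y)$ with $x\le y$, set $L = \{i : x_i = 1\}$ and $S = \{i : y_i = 1,\, x_i = 0\}$, so that $x = 1_L$ and $y = 1_{L \cup S}$, with $L \cap S = \emptyset$.

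The first step would be to rewrite the two algebraic pieces of the inequality in set-theoretic terms. Since $F$ is multilinear, $\partial F(x)/\partial x_i$ is independent of $x_i$, and evaluated at $x = 1_L$ with $i \notin L$ this derivative equals $f(L \cup \{i\}) - f(L)$. Because $y_i - x_i = 1$ exactly when $i \in S$ (and then automatically $i \notin L$), one obtains
\[
\sum_{i=1}^n (y_i - x_i)\frac{\partial F(x)}{\partial x_i} \;=\; \sum_{i \in S}\bigl(f(L \cup \{i\}) - f(L)\bigr), \qquad F(x) - F(y) \;=\; f(L) - f(L \cup S).
\]
Hence the constraint of the proposition at $(x,y)$ is equivalent to
\[
\sum_{i \in S}\bigl(f(L \cup \{i\}) - f(L)\bigr) \;\geq\; \gamma\cdot\bigl(f(L \cup S) - f(L)\bigr),
\]
which (by monotonicity, the right-hand side is nonnegative) is exactly the assertion that the ratio appearing in the definition of $\gamma_{U,k}(f)$ at this particular $(L,S)$ is at least $\gamma$.

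Both directions are then short. For $\gamma \leq \gamma^*(f)$: any admissible triple $(U,k,L,S)$ in the submodularity-ratio definition yields $(x,y)=(1_L,1_{L\cup S})$ with $x\leq y$, and the proposition's inequality forces every such ratio to be at least $\gamma$; minimizing over $L,S$ and then over $U,k$ gives $\gamma^*(f)\geq\gamma$. For the reverse $\gamma\geq\gamma^*(f)$: any $x\leq y$ in $\{0,1\}^n$ induces an admissible $(L,S)$, and choosing $U\supseteq L$ with $k\geq|S|$ places $(L,S)$ inside the infimum defining $\gamma_{U,k}(f)$, so the ratio at $(L,S)$ is at least $\gamma_{U,k}(f)\geq\gamma^*(f)$; re-expressing this via the identity of the preceding paragraph shows the proposition's inequality holds with parameter $\gamma^*(f)$, and by maximality $\gamma\geq\gamma^*(f)$.

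The only mild subtlety is the degenerate case $f(L\cup S) = f(L)$, for which the ratio in the submodularity-ratio definition is a priori ill-defined. Here monotonicity of $f$ forces $f(L)\leq f(L\cup\{i\})\leq f(L\cup S)=f(L)$ for every $i\in S$, so the numerator vanishes too and the rearranged inequality $\text{num}\geq\gamma\cdot\text{denom}$ holds vacuously; this is fully consistent with the usual $0/0 = +\infty$ convention. Apart from flagging this case, the proof is a direct bookkeeping exercise and there is no real obstacle.
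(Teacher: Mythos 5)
Your proof is correct and follows essentially the same route as the paper's: both translate the polynomial inequality into the set-function inequality $\sum_{i\in S}\bigl(f(L\cup\{i\})-f(L)\bigr)\geq\gamma\bigl(f(L\cup S)-f(L)\bigr)$ by passing through indicator vectors and the identity expressing $\frac{\partial F(x)}{\partial x_i}$ as a difference of two point evaluations of the multilinear $F$ (Lemma \ref{lem:deriv.multilin}). The only difference is cosmetic: the paper starts from the equivalent inequality-form definition of $\gamma^*(f)$ from \citep[Definition 1]{bian2017guarantees}, which avoids any division and hence the $0/0$ case you handle explicitly, whereas you argue directly from the ratio definition of $\gamma_{U,k}(f)$.
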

The proof of this proposition can be found in Appendix \ref{appendix:proofs.approx.submod}. When $\gamma=1$, we recover the definition of submodularity as given in Proposition \ref{prop:submod.mle} $(v').$ By using similar proof techniques to Propositions \ref{prop:sufficiency.sos.submod}, \ref{prop:tractability.sos.submod} and \ref{prop:bounds.on.t}, the following corollary is immediate.

\begin{corollary} \label{cor:t.sos.approx.submod}
    Let $f:2^{\Omega} \rightarrow \mathbb{R}$ be a nonnegative monotone set function of degree $d$ and let $F$ be its multilinear extension. Define:
    \begin{equation} \label{eq:approx.submod.sos}
    \begin{aligned}
    \gamma^t_{sos} &\mathrel{\mathop{:}}= \max_{\gamma} \gamma\\
    &\text{s.t. } \sum_{i=1}^n (y_i-x_i)\frac{\partial F(x)}{\partial x_i}+\gamma(F(x)-F(y)) \text{ is $t$-sos} \mod{I_1[x,y]}.
    \end{aligned}
    \end{equation}
Then $\gamma^*(f) \geq \gamma^{t'}_{sos} \geq \gamma^t_{sos}, \forall t' \geq t \geq 0$ and $\gamma^*(f)=\gamma^t_{sos}$ for $t=\lceil \frac{n+d-1}{2}\rceil.$ Furthermore, computing $\gamma^t_{sos}$ for fixed $t$ amounts to solving a semidefinite program.
\end{corollary}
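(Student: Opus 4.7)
The plan is to mirror, step by step, the derivations of Propositions \ref{prop:sufficiency.sos.submod}, \ref{prop:tractability.sos.submod} and \ref{prop:bounds.on.t}, with the role of the second-order sos certificate on $I_2[x_{-i,j}]$ now played by the certificate on $I_1[x,y]$ applied to $P_\gamma(x,y) := \sum_{i=1}^n(y_i-x_i)\frac{\partial F(x)}{\partial x_i} + \gamma(F(x)-F(y))$. Three facts need to be verified: the monotonicity chain $\gamma^*(f) \geq \gamma^{t'}_{sos} \geq \gamma^t_{sos}$, tightness at $t^\star := \lceil (n+d-1)/2\rceil$, and the SDP reformulation for fixed $t$.

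The two easy inequalities I would handle first. Monotonicity $\gamma^t_{sos} \leq \gamma^{t'}_{sos}$ is immediate, since every $t$-sos decomposition is a $t'$-sos decomposition whenever $t \leq t'$, so the feasible set in \eqref{eq:approx.submod.sos} only grows in $t$. For $\gamma^t_{sos} \leq \gamma^*(f)$, if $\gamma$ is feasible with $P_\gamma \equiv \sum_r q_r^2 \mod I_1[x,y]$, then for every $(x,y) \in \mathcal{V}_{\mathbb{R}}(I_1[x,y]) = \{(x,y) \in \{0,1\}^{2n} : x \leq y\}$ we have $P_\gamma(x,y) = \sum_r q_r(x,y)^2 \geq 0$; Proposition \ref{prop:approx.submod} then forces $\gamma \leq \gamma^*(f)$, and passing to the supremum yields the stated bound.

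The main obstacle is tightness at $t^\star$, which requires an exact sos representation theorem for $I_1[x,y]$: every polynomial of degree at most $d$ that is nonnegative on $\mathcal{V}_{\mathbb{R}}(I_1[x,y])$ is $t^\star$-sos modulo $I_1[x,y]$. The strategy follows the template of Proposition \ref{prop:bounds.on.t}. I would first observe that $I_1[x,y]$ is radical: the three defining relations $x_i^2 \equiv x_i$, $y_i^2 \equiv y_i$, $x_iy_i \equiv x_i$ produce a reduced-monomial basis $\{\prod_{i \in A} x_i \prod_{j \in B} y_j : A \cap B = \emptyset\}$ of the quotient $\mathbb{R}[x,y]/I_1[x,y]$ of size $3^n$, which matches the cardinality of the variety. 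For any $R$ nonnegative on the variety, the Lagrange identity $R \equiv \sum_v R(v)\,\delta_v \mod I_1[x,y]$, combined with the idempotency $\delta_v^2 \equiv \delta_v \mod I_1[x,y]$ (a consequence of radicality together with $\delta_v \in \{0,1\}$ on the variety), yields the raw sos expression $R \equiv \sum_v (\sqrt{R(v)}\,\delta_v)^2 \mod I_1[x,y]$. The precise degree bound $t^\star$ then follows from a linear-algebraic/Gram-matrix argument analogous to the one in the proof of Proposition \ref{prop:bounds.on.t}, refined so that the certificate degrees are controlled by the input degree $d$ and by the $n$ effective degrees of freedom of $I_1[x,y]$ (one ``trit'' per index $i$), rather than by the ambient dimension $2n$. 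Applied to $R = P_{\gamma^*(f)}$---which is nonnegative on the variety by Proposition \ref{prop:approx.submod}---this shows $\gamma^*(f)$ is feasible in \eqref{eq:approx.submod.sos} and hence $\gamma^{t^\star}_{sos} \geq \gamma^*(f)$, closing the chain to equality.

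SDP tractability for fixed $t$ is then straightforward: exactly as in the proofs of Propositions \ref{prop:tractability.sos.submod} and \ref{prop:size}, the sos constraint in \eqref{eq:approx.submod.sos} can be rewritten as $P_\gamma(x,y) \equiv z_t(x,y)^\top Q z_t(x,y) \mod I_1[x,y]$ for some psd matrix $Q$ indexed by the reduced monomials of degree at most $t$. Matching coefficients in this basis produces linear equalities in the entries of $(Q,\gamma)$, and maximizing $\gamma$ subject to $Q \succeq 0$ and these linear equalities is a standard semidefinite program.
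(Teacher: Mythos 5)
Your proposal takes essentially the same route as the paper, which proves this corollary in a single line by appealing to the techniques of Propositions \ref{prop:sufficiency.sos.submod}, \ref{prop:tractability.sos.submod} and \ref{prop:bounds.on.t}: your three-part decomposition (sos feasibility implies nonnegativity on $\mathcal{V}_{\mathbb{R}}(I_1[x,y])$ and hence $\gamma \leq \gamma^*(f)$ via Proposition \ref{prop:approx.submod}; monotonicity in $t$; a Sakaue-type exactness level at $t=\lceil \frac{n+d-1}{2}\rceil$; the standard Gram-matrix SDP reformulation) is precisely what that appeal unpacks to. The only genuinely nontrivial step---transferring the exactness bound of \citep{sakaue2016exact} from the hypercube ideal to $I_1[x,y]$ with effective dimension $n$ rather than $2n$---is sketched rather than proved in your write-up, but the paper leaves it equally implicit, so your argument matches (indeed slightly exceeds) the level of detail of the paper's own proof.
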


The sum of squares condition in \eqref{eq:approx.submod.sos} involves a polynomial that is equal to $H_1(x,y)$ when $\gamma=1$ (see Theorem \ref{thm:equiv.sos.submod.v}). One cannot, however, derive analogs of the algebraic equivalences from Section~\ref{subsec:equiv.charac.sos.submod} when $\gamma \neq 1$. This is apparent when considering the proof of Theorem \ref{thm:equiv.sos.submod.v}. In particular, one cannot recover a characterization of approximate submodularity which relies solely on second-order derivatives. We now illustrate this approach on an example given in \citep{bian2017guarantees}.

\begin{example}
We consider here the determinantal function in \citep[Section 4.2]{bian2017guarantees}. Given a positive definite covariance matrix $\Sigma \in \mathbb{R}^{n \times n}$ and a parameter $\sigma>0$, this function can be written:
$$f(S)=\det(I+\sigma^{-2}\Sigma_S)$$
where $I$ is the $n \times n$ identity matrix and $\Sigma_S$ is a submatrix of $\Sigma$ whose rows and columns are indexed by $S.$ As explained in \citep{bian2017guarantees}, $f$ is not submodular in general; in fact, it is supermodular. Letting the eigenvalues of $I+\sigma^{-2}\Sigma$ be $\lambda_1 \geq \ldots \lambda_n >1$, they show that
\begin{align} \label{eq:bianetal}
\gamma^*(f) \geq \frac{n(\lambda_n-1)}{\left( \prod_{j=1}^n \lambda_j \right)-1}= \mathrel{\mathop{:}} \gamma_{spectral}(f).
\end{align}
As this leads to a loose bound in general, we use sos techniques to improve on this bound.

The first thing to note is that Corollary \ref{cor:t.sos.approx.submod} cannot be applied as is to this example. This is because $\deg(f)=n$ for a randomly generated $\Sigma.$ Indeed, one can show using, e.g., Leibniz's formula, that the multilinear extension of $f$ is given by
$$F(x)=\sum_{S\subseteq \Omega}a(S) \prod_{i \in S}x_i,\text{ where } a(S)=\frac{1}{\sigma^{2|S|}}\det(\Sigma_S) >0$$
with the convention that $a(\emptyset)=1.$ As $\deg(f)=n$, simply writing out \eqref{eq:approx.submod.sos} is exponential in the dimension. As a consequence, we work instead with the degree-$k$ truncation $F_k$ of $F$, that is
$$F_k(x)=\sum_{S \subseteq \Omega, |S| \leq k} a(S) \prod_{i \in S} x_i.$$
The question is then how to adapt \eqref{eq:approx.submod.sos} when we replace $F$ by $F_k$, without losing any guarantees. It turns out that the following statement holds.

\begin{proposition} \label{prop:truncation}
Let $f:2^{\Omega} \rightarrow \mathbb{R}$ be a nonnegative monotone set function and let $F$ be its multilinear extension with degree-$k$ truncation $F_k$. Further define $T_k=F-F_k$ and let $$m=\min_{x \in [0,1]^n} \min_{i=1,\ldots,n} \frac{\partial{T_k(x)}}{\partial x_i} \leq 0 \text{ and } M=\max_{x \in [0,1]^n} ||\nabla T_k(x)||_{\infty} \geq 0.$$ Then, setting
\begin{equation} \label{eq:approx.submod.trunc}
\begin{aligned}
\gamma_{trunc, sos}^{t,k}\mathrel{\mathop{:}}= &\max_{\gamma} \gamma\\
&\text{s.t. } \sum_{i=1}^n (y_i-x_i) \left( \frac{\partial F_k(x)}{\partial x_i}+m-M\gamma \right)+\gamma(F_k(x)-F_k(y)) \text{ is $t$-sos }\mod{I_1[x,y]},
\end{aligned}
\end{equation}
we have $\gamma^*(f) \geq \gamma_{trunc,sos}^{t,k}$ for any $k\leq n$ and $\lceil \frac{k}{2}\rceil \leq t  \leq  \lceil \frac{n+k-1}{2}\rceil.$
\end{proposition}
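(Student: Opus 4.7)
My plan is to reduce the inequality $\gamma^*(f) \geq \gamma_{trunc,sos}^{t,k}$ to a pointwise bound on the variety $\mathcal{V}_{\mathbb{R}}(I_1[x,y]) = \{(x,y)\in\{0,1\}^{2n} : x\leq y\}$ via Proposition~\ref{prop:approx.submod}, and to obtain that bound by absorbing the tail $T_k := F - F_k$ into the constants $m$ and $M$. Set $\gamma = \gamma_{trunc,sos}^{t,k}$; since $\gamma^*(f) \geq 0$ for every nonnegative monotone $f$ (both numerator and denominator in the submodularity ratio are nonnegative by monotonicity), I may assume $\gamma \geq 0$, or the conclusion is trivial. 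By Proposition~\ref{prop:approx.submod}, it then suffices to show that $P(x,y) := \sum_{i=1}^{n}(y_i-x_i)\,\partial F(x)/\partial x_i + \gamma(F(x) - F(y)) \geq 0$ for every $(x,y) \in \mathcal{V}_{\mathbb{R}}(I_1[x,y])$.

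Writing $F = F_k + T_k$, I decompose $P = P_k + R$, where $P_k$ is the same expression with $F$ replaced by $F_k$ and $R(x,y) = \sum_{i}(y_i-x_i)\,\partial T_k(x)/\partial x_i + \gamma(T_k(x) - T_k(y))$. Since $T_k$ is multilinear, each $\partial T_k/\partial x_i$ is independent of $x_i$, multilinear in the remaining variables, and thus attains its minimum over $[0,1]^n$ at a vertex; by the definition of $m$, this minimum is $m$, so $\sum_i (y_i-x_i)\,\partial T_k(x)/\partial x_i \geq m\sum_i (y_i-x_i)$ whenever $y\geq x$. For the other term, I use telescoping along any monotone chain from $x$ to $y$ in $\{0,1\}^n$: each step toggles a single coordinate $i$ from $0$ to $1$, contributing $\partial T_k/\partial x_i$ evaluated at a Boolean point (valid because $T_k$ is multilinear), which lies in $[-M,M]$ by the definition of $M$. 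Summing over the $\sum_i (y_i-x_i)$ toggles yields $|T_k(y)-T_k(x)| \leq M\sum_i (y_i-x_i)$, and hence, using $\gamma\geq 0$, $R(x,y) \geq (m-M\gamma)\sum_i(y_i-x_i)$. Plugging this back in gives, on the variety,
$$P(x,y) \;\geq\; \sum_{i=1}^{n}(y_i-x_i)\Bigl(\tfrac{\partial F_k(x)}{\partial x_i} + m - M\gamma\Bigr) + \gamma\bigl(F_k(x)-F_k(y)\bigr),$$
and the right-hand side is precisely the polynomial appearing in the $t$-sos constraint of \eqref{eq:approx.submod.trunc}.

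Finally, by the remark following Definition~\ref{def:t-sos}, a polynomial that is $t$-sos modulo $I_1[x,y]$ is nonnegative on $\mathcal{V}_{\mathbb{R}}(I_1[x,y])$. Hence $P(x,y) \geq 0$ for all $x,y \in \{0,1\}^n$ with $x \leq y$, so $\gamma$ is feasible in the characterization of $\gamma^*(f)$ given by Proposition~\ref{prop:approx.submod}, yielding $\gamma^*(f) \geq \gamma$ as desired. The main obstacle is the bound on $T_k(x)-T_k(y)$: I want to avoid invoking a continuous mean-value theorem on the segment $[x,y]$ and instead exploit multilinearity to keep the argument in the discrete/algebraic setting used throughout the paper. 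The stated range $\lceil k/2\rceil \leq t \leq \lceil(n+k-1)/2\rceil$ then follows directly by applying the degree bookkeeping behind Proposition~\ref{prop:bounds.on.t} to the truncated polynomial (of degree $k+1$ in $(x,y)$), exactly paralleling the derivation of the $t$-range in Corollary~\ref{cor:t.sos.approx.submod}.
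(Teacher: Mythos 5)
Your proof is correct and follows essentially the same route as the paper's: decompose $F=F_k+T_k$, lower-bound the gradient term by $m$ and the difference $T_k(x)-T_k(y)$ by $-M\sum_i(y_i-x_i)$, and conclude via Proposition~\ref{prop:approx.submod} together with the nonnegativity of $t$-sos polynomials on $\mathcal{V}_{\mathbb{R}}(I_1[x,y])$. The only differences are cosmetic: you obtain the Lipschitz-type bound $|T_k(x)-T_k(y)|\leq M\sum_i(y_i-x_i)$ by discrete telescoping along a monotone Boolean chain where the paper invokes the mean-value theorem and H\"older's inequality on $[0,1]^n$, and you explicitly dispatch the case $\gamma<0$ (trivial since $\gamma^*(f)\geq 0$ for nonnegative monotone $f$), a point the paper leaves implicit.
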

Of course, $\gamma_{trunc,sos}^{t,\deg(f)}=\gamma^t$ for any appropriate $t$, as $F_k=F$ and $T_k=0$ when $k=\deg(f)$. The proof of Proposition \ref{prop:truncation} can be found in Appendix~\ref{appendix:proofs.approx.submod}.

In our setting, we have $T_k(x)=\sum_{S\in \Omega, |S|>k} a(S) \prod_{i \in S} x_i,$ where $a(S) \geq 0.$ Thus, $m=0.$ To compute $M$, we use the notation $x^{i=1}$ (resp. $x^{i=0}$) to mean the vector $x$ where entry $i$ is set to $1$ (resp. $0$). For fixed $i \in \{1,\ldots,n\}$, we have:
\begin{align*}
\left|\frac{\partial T_k(x)}{\partial x_i} \right| =T_k(x^{i=1})-T_k(x^{i=0}) &\leq T_k(x^{i=1}) \leq T_k(1,\ldots,1)\\
&=F(1,\ldots,1)-F_k(1,\ldots,1)=\det(I+\sigma^{-2}\Sigma)-F_k(1,\ldots,1),
\end{align*}
which can be computed numerically as we choose $k$ small enough that we can write out $F_k.$

Placing ourselves in the same experimental setting as that in \cite[Section 5.3]{bian2017guarantees}, we let $n=10$, set $\sigma=2,$ and generate random covariance matrices $\Sigma \in \mathbb{R}^{n \times n}$ with uniformly distributed eigenvalues in $[0,1]$. We take in our case $k \in \{1,2,3,4\}$ (with corresponding $t \in \{1,2,2,3\}$). All results are averaged over 10 instances. We plot in Figure \ref{fig:approx.submod} the true value of $\gamma^*$, computed in a brute-force manner, the lower bound $\gamma_{spectral}$ as given in \eqref{eq:bianetal}, and $\gamma_{trunc,sos}^{t,k}$ for different values of $k$. Even with a drastic truncation at $k=1,$ our method provides an improved upper bound on $\gamma^*$, as compared to $\gamma_{spectral}.$ As expected the quality of this bound increases as $k$ increases, nearly matching the true value of $\gamma^*$ when $k=4.$ Of course, as $k$ increases, the computation time also increases, leading to the classic trade-off of computation versus accuracy; see Appendix \ref{appendix:comp.submod} for details. Note that the case $k=3$, which gives a tight bound on $\gamma^*$ already, can be solved in 15 secs.

\begin{figure}
    \centering    \includegraphics[width=0.6\linewidth]{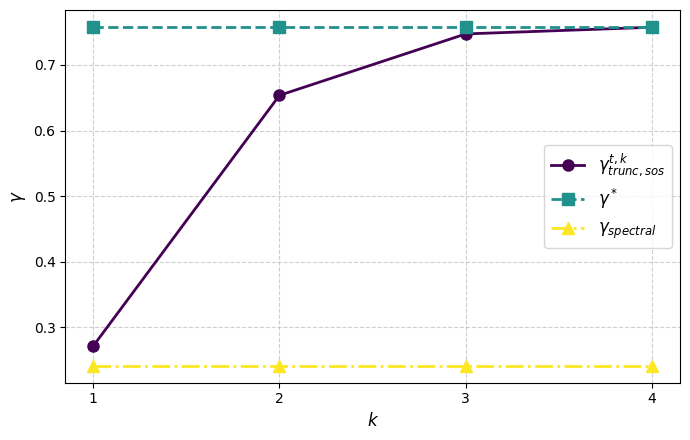}
    \caption{Comparison of $\gamma^*$, $\gamma_{spectral}$ and $\gamma_{trunc,sos}^{t,k}$ for different values of $k$.}
    \label{fig:approx.submod}
\end{figure}
\end{example}

We remark that our methods could likewise be used to obtain bounds on parameters appearing in other definitions of approximate submodularity, such as the ones defined in \citep{horel2016maximization,krause2008near}.

\subsection{Difference of submodular optimization} \label{subsec:diff.submod}

We consider now difference of submodular (ds) optimization. These are problems of the form:
\begin{equation}\label{ds.problem}
    \min_{S \subseteq \Omega} f(S) \coloneq g(S) - h(S),
\end{equation}
where $f$ is a set function, and $g$ and $h$ are submodular set functions. Problems of these type are very general, as any set function $f$ can be decomposed into a difference of submodular functions, although such a decomposition may not be known. Indeed, the general decomposition methods in the literature require exponential time \citep{narasimhan2005submodular}. In many settings, however, decompositions are naturally available or trivial to compute. For example, training discriminatively structured graphical models amounts to optimizing the difference between two mutual information functions, which are both submodular under the naïve Bayes assumption \citep{narasimhan2005submodular}. Another important class of set functions with straightforward decompositions are ones where the MLE $F$ of $f$ is given. If $F = \sum_{T \subseteq \Omega}a(T)\prod_{i\in T} x_i$, then decomposing $F$ into $G-H$, where $G$ and $H$ are submodular can be done by writing:
\begin{equation}
    G(x) = \sum_{T \subseteq T^-}a(T)\prod_{i\in T}x_i \text{ and }H(x) = -\sum_{T \subseteq T^+}a(T) \prod_{i \in T}x_i, \label{eq:trivial.decomp}
\end{equation}
where $T^- = \{T \subseteq \Omega : a(T) < 0 \}$ and $T^+ = \{T \subseteq \Omega : a(T) > 0\}$. Examples of applications where this approach is used are  Markov random field inference problems \citep{jegelka2011submodularity}.

To solve ds programs such as  \eqref{ds.problem}, \citet{narasimhan2005submodular} propose the Submodular-Supermodular Procedure (SSP), which we describe in detail now. Note that other approaches to solving ds programs have also been considered \citep{iyer2012algorithms,el2023difference}, though we do not focus on them in this section. As a first step in describing SSP, we introduce the notion of \textit{subgradients} for submodular functions. For a permutation $\pi$ of $\Omega$, we define $S_i^{\pi}=\{\pi(1),\pi(2),\ldots,\pi(i)\}$ as the set of the first $i$ elements of $\pi.$



\begin{definition}[see, e.g., \citet{iyer2012algorithms}] \label{def:subgradient}
Let $T \subseteq \Omega$ and $\pi$ be a permutation of $\Omega$ such that $S_{|T|}^\pi = T$. Let $h$ be a submodular function. The modular function $\underbar{h}_\pi(S) = \sum_{i \in S} w_\pi(i)$, where
    $$w_\pi(\pi(i)) = \begin{cases}
        h(S_1^\pi) &\text{if } i = 1\\
        h(S_i^\pi) - h(S_{i-1}^\pi)& \text{else,}
    \end{cases}$$
    is a subgradient of $h$ at $T$, that is, it satisfies
    $\underbar{h}_\pi(S) \leq h(S), ~\forall S \subseteq \Omega \text{ and } \underbar{h}_\pi(S^\pi_i) = h(S^\pi_i),~\forall i \in 1, \ldots n.$
\end{definition}

Importantly, for any submodular $h$ and any set $T,$ the subgradients of $h$ at $T$ are non-unique. This is in contrast to, e.g., differentiable convex functions. The SSP is then an iterative procedure: it starts with the set $S_0=\emptyset.$ At iterate $S_t$, it derives a subgradient $\underbar{h}_{\pi}(S)$  of $h$ at $S_t$ by selecting a random permutation $\pi$ such that $S^\pi_{|S_t|}=S_t$. To obtain the next iterate, it solves the submodular minimization problem $\min_{S \subseteq \Omega} (g(S)-\underbar{h}_{\pi}(S)),$ which can be done in polynomial time (see the introduction) and sets $S^{t+1}$ to be a minimizer of the problem. This procedure is repeated until a local minimum of $f$ is attained. (Recall that, for set functions, a set is a local minimum if deleting or adding any item to the set does not result in an improved objective.) Algorithm~\ref{alg:subsup} summarizes the SSP. This procedure has commonalities with the convex-concave procedure (CCP) for difference of convex programs \citep{yuille2003concave}. However, due to the non-uniqueness of the subgradients, the SSP does not have a unique set of iterates when run from the same initial iterate, in contrast to the CCP.

\begin{algorithm}[t]
\caption{Submodular-supermodular procedure \citep{narasimhan2005submodular}}\label{alg:subsup}
\textbf{Input:} Submodular $g$ and $h$ such that $f=g-h$ \\
\textbf{Output:} Local minimum of $f$
\begin{enumerate}[leftmargin=2.4em,label=\arabic*:,itemsep=2pt,topsep=4pt]
\item $S_0 \gets \emptyset,\ t \gets 0$
\item \textbf{while} $S_t$ is not a local minimum \textbf{do}
\item \quad $\pi \gets$ random permutation of $\Omega$ such that $S^{\pi}_{|S_t|}=S_t$
\item \quad $S_{t+1} \gets \arg\min_{S\subseteq \Omega} \left(g- \underbar{h}_{\pi}\right)(S)$
\item \quad $t \gets t+1$
\item \textbf{end while}
\item \textbf{return} $S_t$
\end{enumerate}
\end{algorithm}
The SSP is predicated on a ds decomposition of $f$ being known. Given that any set function $f$ has infinitely many ds decompositions, it is natural to wonder whether specific decompositions would lead to a better performance of the SSP algorithm. A relevant concept to answer this question is that of  \textit{irreducible} decompositions \citep{brandenburg2024decomposition}. An irreducible decomposition $(g,h)$ of $f$ are submodular functions $g$ and $h$ such that $f=g-h$ and such that there exists no non-modular submodular function $p$ such that $g-p$ and $h-p$ are submodular as well. \cite{brandenburg2024decomposition} introduce irreducible decompositions, and the related \textit{minimal} decompositions, for computational purposes, as they lead to Lovasz extensions with the smallest number of pieces, enabling more efficient ds function storage and evaluation. It turns out that an irreducible decomposition also provides a better next-iterate performance of the SSP, compared to any decomposition that can be reduced to it, as we show next.

\begin{proposition}\label{prop:irreducible}
    Let $f:2^{\Omega}\rightarrow \mathbb{R}$ be a set function and let $(g,h)$ be an irreducible decomposition of $f$.  Let $(g',h')$ be a decomposition that can be reduced to $(g,h)$. Then for any permutation $\pi$ of $\Omega$, $(g - \underbar{h}_\pi)(X) \leq (g^\prime - \underbar{h}^\prime_\pi)(X), ~ \forall X \subseteq \Omega$.
    This property no longer holds when comparing two different permutations $\pi_1$, $\pi_2$ of $\Omega$.
\end{proposition}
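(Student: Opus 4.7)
The plan hinges on a linearity observation about the subgradient construction in Definition \ref{def:subgradient}: for any set functions $a, b$ and any permutation $\pi$, the weights $w_\pi$ in the definition are telescoping differences of function evaluations and hence additive in the underlying function. Consequently, if $c = a + b$, then $\underbar{c}_\pi = \underbar{a}_\pi + \underbar{b}_\pi$ pointwise. I would record this elementary fact at the start of the proof as a direct unwinding of Definition \ref{def:subgradient}.

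Given this, the first claim is nearly immediate. Since $(g^\prime, h^\prime)$ reduces to $(g, h)$, there is a non-modular submodular function $p$ with $g^\prime = g + p$ and $h^\prime = h + p$. Applying the linearity above to $h^\prime = h + p$ gives $\underbar{h}^\prime_\pi = \underbar{h}_\pi + \underbar{p}_\pi$, and hence
$$(g^\prime - \underbar{h}^\prime_\pi)(X) - (g - \underbar{h}_\pi)(X) = p(X) - \underbar{p}_\pi(X) \geq 0, \quad \forall X \subseteq \Omega,$$
where the inequality is the pointwise bound $\underbar{p}_\pi \leq p$ guaranteed by Definition \ref{def:subgradient} for any subgradient of a submodular function. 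Note that irreducibility of $(g,h)$ is not actually used here; only the existence of a submodular $p$ with $g^\prime = g+p, h^\prime = h+p$ is needed.

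For the negative claim I would exhibit a small explicit counterexample. Take $\Omega = \{1, 2\}$ and let $h$ be the non-modular submodular function with $h(\emptyset) = 0$ and $h(\{1\}) = h(\{2\}) = h(\{1, 2\}) = 1$. Set $g = 0$, so $f = -h$. Then $(0, h)$ is irreducible: any non-modular submodular $q$ with both $-q$ and $h - q$ submodular would in particular have $-q$ submodular, hence $q$ supermodular; but then $q$ would be simultaneously sub- and supermodular and therefore modular, a contradiction. Setting $p = h$ yields the reduced decomposition $(g^\prime, h^\prime) = (h, 2h)$. With $\pi_1 = (1, 2)$, $\pi_2 = (2, 1)$, and $X = \{2\}$, one computes $\underbar{h}_{\pi_1}(\{2\}) = h(\{1, 2\}) - h(\{1\}) = 0$ while $\underbar{h}^\prime_{\pi_2}(\{2\}) = h^\prime(\{2\}) = 2$, giving
$$(g - \underbar{h}_{\pi_1})(\{2\}) = 0 \;>\; -1 = (g^\prime - \underbar{h}^\prime_{\pi_2})(\{2\}),$$
which reverses the inequality of the first part.

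The only substantive step is constructing the counterexample: one needs $(g, h)$ provably irreducible and simultaneously enough freedom in the subgradients of $h$ at some $X$ across different permutations to drive a reversal. Taking $g = 0$ trivializes the irreducibility check (via the sub-and-supermodular argument above) while the multiple chain orderings of $h$ on $\Omega = \{1,2\}$ supply the needed flexibility, so no real obstacle remains.
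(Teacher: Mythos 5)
Your proof is correct and follows essentially the same route as the paper's: the first claim via linearity of the subgradient construction together with the pointwise bound $\underbar{p}_\pi \leq p$ (and, as you note, irreducibility is indeed not used there), and the second claim via a counterexample in which $g$ is modular so that irreducibility is automatic and the order-dependence of the subgradient of $h$ drives the reversal. The paper's counterexample uses $h(S)=\sqrt{|S|}$ on a general ground set rather than your two-element instance, but the structure of the argument is the same.
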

The proof of this proposition and all results in this section can be found in Appendix \ref{appendix:proofs.diff.submod}.

\begin{remark}
Irreducible decompositions are reminiscent of undominated difference of convex decompositions \citep{bomze2004undominated,ahmadi2018dc}. Undominated decompositions are similarly attractive in difference of convex optimization, as a single iteration of CCP using an undominated decomposition will result in a better iterate than any decomposition which it dominates. Irreducible decompositions in the ds setting have the same property, with a significant caveat: the property only holds if \emph{the same permutation $\pi$ is used to derive the subgradient}. 
\end{remark}


As a consequence of Proposition \ref{prop:irreducible}, it might be relevant, for a given set function, to apply SSP to one of its irreducible decompositions. We show next how such a decomposition can be obtained. 

\begin{proposition}\label{prop:opt.irreducible}
Let $f:2^{\Omega} \rightarrow \mathbb{R}$ be a set function of degree $d$ with MLE $F.$ Let $G^*,H^*$ be an optimal solution\footnote{The problem is always feasible as the trivial decomposition in \eqref{eq:trivial.decomp} is a solution to the problem.} to the following optimization problem:
\begin{equation}\label{model:curv.maximization}
\begin{aligned}
    \underset{G,H \in \mathbb{R}[x], \deg(G),\deg(H)\leq d}{\max}&\sum_{x \in \{0,1\}^n} \sum_{i < j} \frac{\partial^2 H(x)}{\partial x_i \partial x_j} \\
    \text{s.t. } &F = G-H, \\
    & G, H \;\text{submodular}.
\end{aligned}
\end{equation}
The decomposition $(G^*,H^*)$ is an irreducible decomposition of $F.$
\end{proposition}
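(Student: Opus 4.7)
The plan is to argue by contradiction, exploiting the fact that the objective in \eqref{model:curv.maximization} is precisely the sum of second-order mixed partial derivatives of $H$ evaluated on $\{0,1\}^n$, which by Proposition~\ref{prop:submod.mle}$(iii')$ is a non-positive quantity measuring how far $H$ is from being modular. Intuitively, maximizing this objective makes $H^*$ as close to modular as possible, and any non-modular submodular ``reduction'' $P$ would contradict this.

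I would proceed as follows. Suppose $(G^*,H^*)$ is optimal for \eqref{model:curv.maximization} but not irreducible. By definition of irreducibility, there exists a non-modular submodular polynomial $P$ such that $G'\mathrel{\mathop{:}}= G^*-P$ and $H'\mathrel{\mathop{:}}= H^*-P$ are both submodular. First I would check feasibility of $(G',H')$: we have $G'-H'=G^*-H^*=F$, and both $G'$ and $H'$ are submodular by construction, and their degrees stay bounded by $d$ (since they are obtained by subtracting from $G^*, H^*$ polynomials $P$ which must itself have bounded degree — if necessary one restricts $P$ to the degree-$d$ component without losing the required properties, as submodularity of the degree-$d$ MLE is what matters). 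Thus $(G',H')$ is a feasible solution to \eqref{model:curv.maximization}.

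Next I would compare objective values. By linearity of differentiation,
\begin{equation*}
\sum_{x\in\{0,1\}^n}\sum_{i<j}\frac{\partial^2 H'(x)}{\partial x_i\partial x_j}
=\sum_{x\in\{0,1\}^n}\sum_{i<j}\frac{\partial^2 H^*(x)}{\partial x_i\partial x_j}
-\sum_{x\in\{0,1\}^n}\sum_{i<j}\frac{\partial^2 P(x)}{\partial x_i\partial x_j}.
\end{equation*}
Since $P$ is submodular, every term $\partial^2 P(x)/\partial x_i\partial x_j$ is non-positive on $\{0,1\}^n$, so the subtracted sum is $\leq 0$, giving an objective that is at least as large with $(G',H')$. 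To reach a contradiction I need strict improvement, which is where the non-modularity of $P$ comes in.

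The key step, and the main obstacle, is to establish that a non-modular submodular polynomial must have at least one strictly negative mixed second derivative at some point of $\{0,1\}^n$. The argument is as follows: since the MLE is multilinear, each $\partial^2 P/\partial x_i\partial x_j$ is itself a multilinear polynomial in $x_{-i,j}$, and hence (by uniqueness of the MLE, Definition~\ref{def:mle}) is identically zero as a polynomial if and only if it vanishes on $\{0,1\}^{n-2}$. If every such mixed partial vanished on $\{0,1\}^n$, then $P$ would have no monomials of degree $\geq 2$, i.e., $P$ would be modular, contradicting the choice of $P$. Hence there is some pair $(i,j)$ and some $x^*\in\{0,1\}^n$ with $\partial^2 P(x^*)/\partial x_i\partial x_j<0$, so the subtracted sum is strictly negative, and the objective at $(G',H')$ strictly exceeds the objective at $(G^*,H^*)$. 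This contradicts the optimality of $(G^*,H^*)$, completing the proof that $(G^*,H^*)$ is irreducible.
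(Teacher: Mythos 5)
Your proposal is correct and follows essentially the same route as the paper's proof: contradiction via the reduced decomposition $(G^*-P, H^*-P)$, linearity of the second derivative, nonpositivity of $\partial^2 P/\partial x_i \partial x_j$ on $\{0,1\}^n$ by submodularity, and strict negativity at some point by non-modularity of $P$. Your added justification of that last strict-negativity step (via multilinearity and uniqueness of the MLE) is a detail the paper merely asserts, but it does not change the argument.
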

The intuition behind the choice of objective in \eqref{model:curv.maximization} is the following: a good decomposition of $f$ is such that $h$ is as modular as possible, so that the linear under-approximation $\underbar{h}_{\pi}$ at each iteration is as close as possible to $h$. 
Trivially, $h$ is modular if and only if the second derivative of the MLE $H$ is zero. By submodularity, $\frac{\partial H(x)}{\partial x_i \partial x_j} \leq 0$ for any $i, j \in [n]$, making it natural to search for a decomposition which maximizes the total curvature of $H$ over the hypercube, which is what \eqref{model:curv.maximization} does. There is a problem with \eqref{model:curv.maximization} however in that it is NP-hard to solve.
\begin{proposition} \label{prop:np.hard.irreducible}
Given a set function $f:2^{\Omega}\rightarrow \mathbb{R}$ of degree $4$, and a rational number $k$, it is strongly NP-hard to decide whether there exists a feasible solution to \eqref{model:curv.maximization} with objective value greater or equal to $k$.
\end{proposition}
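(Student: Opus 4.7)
The plan is to fix the threshold $k=0$ and exhibit a polynomial-time reduction from the submodularity-testing problem $SUBMOD_4$ (strongly hard by Proposition~\ref{prop:np.hardness}, via the Gallo--Simeone reduction from unconstrained $0$-$1$ quadratic minimization) to the feasibility question for \eqref{model:curv.maximization} at this threshold. The key structural observation is that the objective of \eqref{model:curv.maximization} is always bounded above by zero, so the decision at $k=0$ coincides exactly with submodularity of $F$, and no rescaling or gadget construction is required.

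First I would argue that any feasible $(G,H)$ has objective at most $0$: since $H$ is submodular, Proposition~\ref{prop:submod.mle}$(iii')$ gives $\frac{\partial^2 H(x)}{\partial x_i \partial x_j}\leq 0$ for every $x\in\{0,1\}^n$ and every pair $i<j$, so the double sum is nonpositive. Consequently, the existence of a feasible solution with objective $\geq 0$ is equivalent to the existence of one achieving objective exactly $0$, which in turn forces $\frac{\partial^2 H(x)}{\partial x_i \partial x_j}=0$ on $\{0,1\}^n$ for every $i<j$. Because $H$ is multilinear, each mixed partial $\frac{\partial^2 H}{\partial x_i \partial x_j}$ is itself a multilinear polynomial in the remaining variables $x_{-i,j}$, and a multilinear polynomial vanishing on $\{0,1\}^{n-2}$ must be identically zero (by uniqueness of the MLE). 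Hence $H$ contains no monomial with both $x_i$ and $x_j$; ranging over all pairs, $H$ has degree at most $1$, which is precisely the MLE of a modular set function.

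Next I would complete the equivalence: a feasible solution with $H$ modular exists if and only if $F$ is submodular. The forward direction is immediate by taking $(G,H)=(F,0)$ whenever $F$ is submodular. For the converse, if $F=G-H$ with $G$ submodular and $H$ modular, then $F=G+(-H)$ is the sum of a submodular and a modular function, and since submodularity is preserved under addition of modular functions (easy check using Definition~\ref{def:f.submodular}$(i)$ and the identity $h(S)+h(T)=h(S\cup T)+h(S\cap T)$ for modular $h$), $F$ itself is submodular. The reduction $F\mapsto (F,0)$ is trivially polynomial-time and does not inflate the bit length of the input, so strong hardness is preserved, and combining with Proposition~\ref{prop:np.hardness} yields the desired conclusion for degree $d=4$.

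The main obstacle worth flagging is the orientation of hardness: Proposition~\ref{prop:np.hardness} is stated as co-NP-completeness of $SUBMOD_4$, so the above literally yields strong co-NP-hardness of the feasibility problem. To reach strong NP-hardness as stated, one either appeals to polynomial-time Turing reductions (under which the two notions coincide) or, more directly, composes the above equivalence with the Gallo--Simeone reduction from strongly NP-complete $0$-$1$ quadratic minimization to \emph{non}-submodularity of a degree-4 polynomial, which maps ``yes'' instances of that NP-complete problem precisely to instances where no feasible $(G,H)$ attains objective $\geq 0$; negating the output gives a strong NP-hardness reduction to our decision problem.
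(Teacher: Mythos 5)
Your proposal is correct and follows essentially the same route as the paper's proof: a reduction from $SUBMOD_4$ with threshold $k=0$, using submodularity of $H$ to force all mixed second partials of $H$ to vanish on $\{0,1\}^n$ (hence $H$ modular), so that feasibility with objective $\geq 0$ is equivalent to submodularity of $F$. Your additional remark on the co-NP versus NP orientation of the hardness is a fair point that the paper glosses over, but it does not change the substance of the argument.
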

We thus propose to solve instead
\begin{align}
    \max_{G,H \in \mathbb{R}[x],\deg(G),\deg(H)\leq d} & ~ \sum_{x \in \{0,1\}^n} \sum_{i < j} \frac{\partial^2 H(x)}{\partial x_i \partial x_j} \nonumber\\
    \text{s.t. } & F = G-H, \label{model:curv.maximization.sos} \\
    & G, H \;t\text{-sos-submodular}\nonumber
\end{align}
which is guaranteed to be feasible for any $t \geq 2\lceil d/2\rceil -2$, since the trivial decomposition in \eqref{eq:trivial.decomp} is $(2\lceil d/2\rceil -2)$-sos-submodular. Note that solving \eqref{model:curv.maximization.sos} is a semidefinite program as the objective of \eqref{model:curv.maximization.sos} can be shown to be linear in the coefficients of $H$. 
It can further be shown that the decomposition $(G^*,H^*)$ obtained is $t$-sos-submodular-irreducible, i.e., there is no non-modular submodular $P \in \mathbb{R}[x]$ such that $(G^*-P,H^*-P)$ is a $t$-sos-submodular decomposition of $F.$

\begin{example}
We now provide numerical results to show the potential for improved local optima found by the SSP when using decompositions obtained by solving \eqref{model:curv.maximization}. We compare the value of solutions returned by the algorithm when using the trivial decomposition obtained by \eqref{eq:trivial.decomp}, and the $t$-sos-irreducible decomposition obtained by solving \eqref{model:curv.maximization}. The degree $d$ set functions we optimize are generated by randomly sampling MLE coefficients from $\mathrm{unif}(-10, 10)$. For each randomly generated set function, we run the SSP ten times and report the average relative gap of the local optima found by each decomposition. This is to account for the randomness in the procedure stemming from permutation choice at every iteration. The optimal solution to the pseudo-boolean optimization problem $\min_{x\in\{0,1\}^n} F(x)$ is found for every randomly generated $F$ using a mixed-integer linear program. We fix $d=4$, as this is the smallest degree for which testing submodularity is hard, and set $t=2$ as this is the smallest $t$ such that \eqref{model:curv.maximization} is guaranteed to be feasible. For $n \in \{10, 15, 20\}$, we generate 10 random set functions and report the average relative optimality gap obtained by both decomposition methods in Table~\ref{tab:ds.decomp}. The numerical results indicate that the choice of decomposition of $F$ is significant when applying ds optimization techniques to search for minima, as the $t$-sos-submodular-irreducible decompositions lead to consistently higher-quality local optima. As can be seen in Appendix \ref{appendix:diff.submod}, this is not only the case on average, but on each instance individually. The improved solution quality does come at the cost of computation time, echoing the computation versus quality trade-off we observed in the previous sections.
\begin{table}[h]
    \centering
    \begin{tabular}{|c|c|c|c|}
    \hline
     & $n=10$& $n=15$ & $n=20$ \\ \hline \hline
     Trivial Decomposition & 26.74 & 28.66 & 30.61 \\ \hline
     $2$-sos-submodular Decomposition & 14.78 & 15.41 & 13.52\\
    \hline
    \end{tabular}
    \vspace{3mm}
    \caption{Average relative optimality gap (\%) achieved when using the submodular-submodular procedure to optimize a decomposition provided by the trivial decomposition \eqref{eq:trivial.decomp} and the 2-sos-submodular-irreducible decomposition \eqref{model:curv.maximization} for randomly generated set functions.
    }
    \label{tab:ds.decomp}
\end{table}
\end{example}

\section{Conclusion}

In this paper, we introduce the new concept of \emph{$t$-sos submodularity}, which is a hierarchy, indexed by $t$, of sufficient algebraic conditions for submodularity that can be efficiently checked via semidefinite programming. We present several equivalent algebraic characterizations of $t$-sos submodularity and show that some submodularity-preserving operations also maintain $t$-sos submodularity. We further quantify the gap between $t$-sos-submodularity and submodularity. We then present three applications in submodular regression, approximately submodular maximization, and difference of submodular optimization of $t$-sos submodularity, which showcase the usefulness of this new concept. By introducing this concept, we connect semidefinite programming to submodularity, an open question in \citep[Chapter 13]{bach2013learning}, and more broadly, real algebraic geometry to discrete optimization.

A natural follow-up work to this paper is whether unconstrained minimization of $t$-sos-submodular functions can be done using semidefinite programming, more specifically, low levels of the Lasserre hierarchy. In the quadratic setting (a setting where the function is $0$-sos submodular, as shown in this paper), it is well-known that minimizing a submodular function can be solved \revision{ efficiently via linear programming \citep[Theorem 3.16]{punnen2022quadratic}}. In the cubic setting (a setting where the function is $1$-sos submodular), \cite{billionnet1985maximizing} show that minimizing a submodular function can be cast as finding the maximum weight stable set in a bipartite graph, \revision{which can also be solved efficiently via a linear program.} More recently, \cite{burer2025semidefinite} show that \emph{continuous} quadratic submodular minimization can be cast as a semidefinite program. It would be interesting to know whether these results generalize to higher degrees. \revision{Another natural direction for future work is to establish formal statistical guarantees for the generalization error of $t$-sos-submodular regression, in line with existing results in shape-constrained regression \citep{chatterjee2015risk}.}

\section{Code and Data Disclosure}\label{sec:Code and Data Disclosure}
The code and data to support the numerical experiments in this paper can be found \href{https://github.com/SOSSubmodularity/SOSSubmodularity_Revision}{here}.

\section*{Acknowledgments}
We are very grateful to Pablo Parrilo for his helpful feedback. We also gratefully acknowledge the constructive comments of the Department Editor of the Optimization Area at \emph{Operations Research}, Samuel Burer, an anonymous Associate Editor, and two anonymous reviewers that have greatly improved the paper.

\renewcommand{\refname}{References}

\newpage

\appendix

\section{Additional Material Related to Section \ref{sec:theory}} \label{appendix:theory}

\subsection{Background Material and Notation} \label{appendix:background.material}

The first two results we make use of are linked to multilinear polynomials and are straightforward to show.

\begin{lemma} \label{lem:deriv.multilin}
Let $F:\mathbb{R}^n \rightarrow \mathbb{R}$ be a multilinear polynomial. We have:
$$\frac{\partial F(x)}{\partial x_i}=F(x_1,\ldots,x_{i-1},1,x_{i+1},\ldots,x_n)-F(x_1,\ldots,x_{i-1},0,x_{i+1},\ldots,x_n), \forall i=1,\ldots,n.$$
\end{lemma}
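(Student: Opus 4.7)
The plan is to exploit the fact that a multilinear polynomial is linear (of degree at most one) in each individual variable. This will reduce the statement to a trivial computation about affine functions of a single variable.

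First, I would fix an index $i \in \{1,\ldots,n\}$ and decompose $F$ with respect to the variable $x_i$. Since $F$ is multilinear, every monomial of $F$ contains $x_i$ to a power of either $0$ or $1$. Grouping monomials accordingly, we can write
$$F(x_1,\ldots,x_n) = A(x_{-i})\cdot x_i + B(x_{-i}),$$
where $A, B \in \mathbb{R}[x_{-i}]$ are the (unique) polynomials in the remaining variables obtained by collecting, respectively, the terms containing $x_i$ (with $x_i$ factored out) and the terms free of $x_i$.

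Second, I would evaluate the three quantities of interest using this decomposition. Differentiating with respect to $x_i$ yields $\partial F(x)/\partial x_i = A(x_{-i})$, since $B(x_{-i})$ is free of $x_i$. Substituting $x_i = 1$ gives $F(x_1,\ldots,x_{i-1},1,x_{i+1},\ldots,x_n) = A(x_{-i}) + B(x_{-i})$, and substituting $x_i = 0$ gives $F(x_1,\ldots,x_{i-1},0,x_{i+1},\ldots,x_n) = B(x_{-i})$. Subtracting the latter from the former yields $A(x_{-i})$, which matches the partial derivative.

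There is no real obstacle here; the only subtle point is justifying the decomposition $F = A(x_{-i})\cdot x_i + B(x_{-i})$, which follows directly from the definition of multilinearity. The result could alternatively be seen as a special case of the identity $F(x) = x_i F(x^{i=1}) + (1-x_i) F(x^{i=0})$ for multilinear polynomials, from which the claim follows by differentiation in $x_i$.
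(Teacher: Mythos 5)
Your proof is correct and complete. The paper itself offers no proof of this lemma---it simply declares it (together with Lemma~\ref{lem:extension.multilin}) ``straightforward to show''---and the argument you give, decomposing $F = A(x_{-i})\,x_i + B(x_{-i})$ by multilinearity and comparing the derivative with the difference of the two evaluations, is exactly the standard one-line justification the authors have in mind; your closing observation that the claim also follows by differentiating the identity in Lemma~\ref{lem:extension.multilin} is likewise accurate.
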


\begin{lemma} \label{lem:extension.multilin}
Let $F:\mathbb{R}^n \rightarrow \mathbb{R}$ be a multilinear polynomial. We have:
$$F(x_1,\ldots,x_i,\ldots,x_n)=x_iF(x_1,\ldots,1,\ldots,x_n)+(1-x_i)F(x_1,\ldots,0,\ldots,x_n),~\forall i=1,\ldots,n.$$
\end{lemma}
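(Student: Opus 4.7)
The plan is to exploit the fact that a multilinear polynomial, viewed as a polynomial in any single variable while treating the others as parameters, is affine (degree at most 1) in that variable. This reduces the claim to the standard fact that any affine function $\ell(t) = a + bt$ can be recovered from its values at $t=0$ and $t=1$ via the convex-combination identity $\ell(t) = (1-t)\ell(0) + t\ell(1)$.

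Concretely, I would proceed as follows. First, fix an arbitrary $i \in \{1,\ldots,n\}$ and use the definition of multilinearity to expand $F$ in the form
$$F(x_1,\ldots,x_n) = A(x_{-i}) + x_i \, B(x_{-i}),$$
where $A, B \in \mathbb{R}[x_{-i}]$ collect, respectively, the monomials of $F$ that do not contain $x_i$ and the coefficients of those that do (the latter stripped of the factor $x_i$, which is legitimate because every monomial of $F$ contains $x_i$ at most to the first power). Next, I would evaluate $F$ at $x_i = 0$ and $x_i = 1$, obtaining $F(\ldots,0,\ldots) = A(x_{-i})$ and $F(\ldots,1,\ldots) = A(x_{-i}) + B(x_{-i})$.

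Finally, I would substitute these two evaluations into the right-hand side of the claimed identity and simplify:
$$x_i F(\ldots,1,\ldots) + (1-x_i)F(\ldots,0,\ldots) = x_i(A(x_{-i}) + B(x_{-i})) + (1-x_i)A(x_{-i}) = A(x_{-i}) + x_i B(x_{-i}),$$
which is exactly $F(x)$. Since $i$ was arbitrary, the identity holds for all $i$.

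There is essentially no obstacle here: the entire content of the lemma is the elementary observation that multilinearity in coordinate $i$ means affinity in $x_i$, together with the one-dimensional interpolation formula. If needed for rigor, the decomposition step can be justified by grouping the monomials of $F$ according to whether $x_i$ appears in them, which is well-defined precisely because each monomial is square-free.
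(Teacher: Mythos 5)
Your proof is correct, and it is precisely the elementary argument the paper has in mind: the authors state this lemma without proof, calling it "straightforward to show," and your decomposition $F = A(x_{-i}) + x_i B(x_{-i})$ followed by evaluation at $x_i=0$ and $x_i=1$ is the standard way to fill in that gap. Nothing further is needed.
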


Some additional background material in algebraic geometry is required for some proofs, \revision{which readers familiar with the material may skip.} We assume that some concepts, e.g., monomial orderings, radical ideals, and Gr\"obner bases, are known; see \cite[Section 2]{laurent2009sums} or \cite[Appendix A.2]{blekherman2012semidefinite} for definitions. Recall the definitions of the ideals given in \eqref{eq:def.ideals} and \eqref{eq:def.ideals.new}. 

\begin{proposition}
Assuming $s_n> \ldots >s_1>y_n>\ldots>y_1>x_n>\ldots>x_1$, the polynomials generating the ideals in \eqref{eq:def.ideals} and \eqref{eq:def.ideals.new} constitute reduced Gr\"obner bases of the corresponding ideals with respect to the graded lexicographic order. 
\end{proposition}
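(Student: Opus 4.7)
The plan is to apply Buchberger's criterion---the generators form a Gr\"obner basis if and only if every S-polynomial $S(p,q)$ of a pair of generators reduces to $0$ modulo the generating set---and then check reducedness by direct inspection of leading terms. Under the graded lexicographic order with the specified variable order, each generator has a degree-$2$ leading term, namely $x_i^2$ for $I_2[x]$; $x_i^2$, $x_iy_i$, $y_i^2$ for $I_1[x,y]$; and $x_i^2$, $x_iy_i$, $x_is_i$, $y_i^2$, $y_is_i$, $s_i^2$ for $I_0[x,y,s]$. The corresponding trailing term in each generator is a degree-$1$ monomial in a single variable.

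The first step is to dispose of cross-index pairs. Any two generators with different subscripts $i\ne j$ involve disjoint variable sets, so their leading monomials are coprime; Buchberger's first criterion then forces the corresponding S-polynomial to reduce to $0$ modulo the generating set. This handles all pairs for $I_2[x]$ (which has only cross-index pairs) and the majority of pairs for $I_1[x,y]$ and $I_0[x,y,s]$.

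The second step is to enumerate same-index pairs---three for $I_1[x,y]$ and fifteen for $I_0[x,y,s]$---and explicitly reduce each S-polynomial. After the initial cancellation built into the definition of the S-polynomial, each one collapses to a binomial whose monomials are either themselves leading terms of some generator, or degree-$1$ monomials in the trailing variables. The remaining reductions amount to iteratively applying the rewrite rules encoded by the generators (e.g.\ $x_i^2\to x_i$, $x_iy_i\to s_i$, $y_is_i\to s_i$, $s_i^2\to s_i$ for $I_0[x,y,s]$) a finite number of times. For instance, $S(x_iy_i-s_i,\,y_is_i-s_i)=s_i(x_iy_i-s_i)-x_i(y_is_i-s_i)=-s_i^2+x_is_i$, which rewrites via $s_i^2\mapsto s_i$ and then $x_is_i\mapsto s_i$ down to $0$; all other same-index S-polynomials follow the same pattern.

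Reducedness is immediate: each generator is monic, and its trailing monomial has degree $1$, while every leading monomial in the basis has degree $2$, so no trailing term is divisible by any leading term. The only mild obstacle is the bookkeeping for the fifteen same-index pairs of $I_0[x,y,s]$, but since every reduction is a two- or three-step substitution, the verification is routine rather than deep.
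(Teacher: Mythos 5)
Your proof is correct and complete; the paper itself offers no argument here (it simply asserts the claim is ``straightforward to show''), and your Buchberger-criterion verification is exactly the standard route one would take to fill that gap. The leading terms, S-polynomial reductions, and pair counts (three same-index pairs for $I_1[x,y]$, fifteen for $I_0[x,y,s]$) all check out, and the product criterion correctly disposes of every coprime-leading-monomial pair. One tiny addendum on reducedness: besides checking that no trailing (degree-$1$) monomial is divisible by a leading monomial, one should also note that no leading monomial divides another, which holds trivially since they are pairwise distinct monomials of the same degree $2$.
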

This is straightforward to show using, e.g., the definition of a reduced Gr\"obner basis. Given a polynomial $p \in \mathbb{R}[x]$ and an ideal with a reduced Gr\"obner basis, the \emph{normal form} of $p$ with respect to the Gr\"obner basis, denoted by $\overline{p}$, is the remainder of $p$ when performing multivariate division on $p$ by the Gr\"obner basis. This remainder is \emph{unique}. Normal forms have the following properties.

\begin{lemma}[see, e.g., \cite{permenter2012selecting}] \label{lem:props.normal.form}
Let $p,q \in \mathbb{R}[x]$ and let $I$ be an ideal with associated reduced Gr\"obner basis $G$.  Further, let $\overline{p}$ and $\overline{q}$ be the normal forms of $p$ and $q$ with respect to $G$. We have:
\begin{enumerate}
\item Ideal membership: $p \in I \Leftrightarrow \overline{p}=0$
\item Arithmetic identities: $\overline{p(x) \cdot q(x)}=\overline{\overline{p(x)}\cdot \overline{q(x)}}$ and $\overline{p(x)+q(x)}=\overline{p(x)}+\overline{q(x)}.$
\item Congruence modulo $I$:$p\equiv q \mod{I}$ if and only if $\overline{p}=\overline{q}$.
\end{enumerate}
\end{lemma}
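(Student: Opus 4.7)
The plan is to derive all three claims from a single structural fact: for a reduced Gr\"obner basis $G$ of $I$ (with respect to a fixed monomial order), the normal form $\overline{p}$ is the unique polynomial $r$ satisfying (a) $r \equiv p \pmod{I}$ and (b) no term of $r$ is divisible by the leading monomial of any element of $G$. Equivalently, the set $\mathcal{N}$ of polynomials satisfying (b) is a vector-space complement of $I$ in $\mathbb{R}[x]$, giving the direct-sum decomposition $\mathbb{R}[x] = \mathcal{N} \oplus I$, with $\overline{p}$ the component of $p$ lying in $\mathcal{N}$. I would state this characterization upfront (it is the output guarantee of the division algorithm relative to a reduced Gr\"obner basis) and then derive each claim from it.

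For Part 1, if $p \in I$ then $p \equiv 0 \pmod{I}$; since $0 \in \mathcal{N}$, uniqueness forces $\overline{p} = 0$. Conversely, the identity $p - \overline{p} \in I$ produced by the division algorithm shows that $\overline{p} = 0$ implies $p \in I$.

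Part 2 is the only place where care is required. For the additive identity, $\overline{p} + \overline{q}$ lies in $\mathcal{N}$ by linearity of condition (b) and is congruent to $p+q$ modulo $I$, so by uniqueness it is the normal form of $p+q$. For the multiplicative identity, the product $\overline{p}\cdot\overline{q}$ is congruent to $pq$ modulo $I$ (write $pq - \overline{p}\cdot\overline{q} = q(p-\overline{p}) + \overline{p}(q-\overline{q}) \in I$), but in general it fails condition (b); a further reduction is therefore needed, and uniqueness then yields $\overline{pq} = \overline{\overline{p}\cdot\overline{q}}$. The main obstacle in this part is precisely not to claim the stronger but generally false identity $\overline{pq} = \overline{p}\cdot\overline{q}$; the asymmetry between the two arithmetic identities traces back to whether the operation preserves $\mathcal{N}$.

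Part 3 is then an immediate corollary: $p \equiv q \pmod{I}$ iff $p - q \in I$, iff by Part 1 $\overline{p-q} = 0$, iff by additivity in Part 2 $\overline{p} - \overline{q} = 0$, iff $\overline{p} = \overline{q}$. No further work is needed, and all of the machinery is classical, so no substantive obstacle is expected beyond the book-keeping around condition (b) in Part 2.
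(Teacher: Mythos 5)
Your sketch is correct. Note, however, that the paper does not prove this lemma at all: it is stated as cited background material (attributed to the Gr\"obner basis literature), so there is no in-paper argument to compare against. What you have written is the standard textbook derivation --- the uniqueness characterization of the remainder on division by a Gr\"obner basis, the direct-sum decomposition $\mathbb{R}[x]=\mathcal{N}\oplus I$ into standard monomials and the ideal, and the three claims as corollaries --- and you correctly isolate the one subtle point, namely that $\overline{p}\cdot\overline{q}$ need not lie in $\mathcal{N}$, which is why the multiplicative identity requires the outer reduction while the additive one does not. One small remark: full reducedness of $G$ is not needed for any of this; a Gr\"obner basis already guarantees uniqueness of the remainder, and reducedness only serves to make the basis itself canonical.
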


\begin{remark} \label{remark:normal.form.ideal}
To obtain the normal forms of a given polynomial $p$ modulo any one of the three ideals under consideration, we simply use the equations that describe these ideals to sub out the higher degree variables/monomials (e.g., $x_i^2$ for $I_2[x]$) that appear in $p$ by the lower degree variables/monomials (e.g., $x_i$ for $I_2[x]$). 
\end{remark}

\begin{remark} \label{remark:normal.form.qr}
In the definition of $t$-sos modulo $I$ (see Definition \ref{def:t-sos}), if $G$ is a reduced Gr\"obner basis of $I$, one can assume without loss of generality that $q_r$ is in normal form with respect to $G$; see \citep[Section 3.3.5]{blekherman2012semidefinite}. In particular, for all three of our ideals, we can assume that $q_r$ is a multilinear polynomial in the appropriate variables.
\end{remark}

\begin{definition}\label{def:conv.not.proof}
Given $x=(x_1,\ldots,x_n)$ and $y=(y_1,\ldots,y_n)$, and for $i\in \{0,\ldots,n+1\},$ we let $[xy]^i$ to be the vector of variables of size $n$ whose $k^{th}$ entry is given by:
\begin{align*}
[xy]^i_k=
\begin{cases}
x_k &\text{ if } k=1,\ldots,i\\
y_k &\text{ if } k=i+1,\ldots,n.
\end{cases}
\end{align*}
Furthermore, for a vector of variables $x$ and $\alpha \in \mathbb{R}$ or a variable, we let $[x]^{k=\alpha }$ be the vector $x$ with $k^{th}$ entry set to $\alpha.$ We further let $[x]^{k=\alpha,j=\beta}$ be the vector $x$ with $k^{th}$ entry set to $\alpha$ and $j^{th}$ entry set to $\beta$, where $\alpha,\beta$ are real numbers or variables.
\end{definition}

\subsection{Proofs of Results in Section \ref{subsec:def.t.sos}.}\label{appendix:def.t.sos}

\revision{Readers familiar with the sum of squares literature may find these arguments standard and may wish to skip this section as a consequence.}

\proof{Proof of Proposition \ref{prop:sufficiency.sos.submod}.}
Let $t \in \mathbb{N}$ and let $f$ be $t$-sos-submodular with MLE $F$. Further, let $i,j \in \{1,\ldots,n\}, i\neq j$. We have: 
\begin{equation} \label{eq:t-sos}
-\frac{\partial^2 F(x)}{\partial x_i \partial x_j} \equiv \sum_{r} q_r^2(x_{-i,j}) \mod{I_2[x_{-i,j}]}
\end{equation}
for some $q_r \in \mathbb{R}[x_{-i,j}]$ such that $\deg(q_r) \leq t$. Recall from Section~\ref{subsec:ideals.varieties.sos} that $\mathcal{V}_{\mathbb{R}}(I_2[x_{-i,j}])=\{0,1\}^{n-2}$ and that \eqref{eq:t-sos} implies $$-\frac{\partial^2 F(x)}{\partial x_i \partial x_j}= \sum_{r} q_r^2(x_{-i,j}),~ \forall x_{-i,j} \in \mathcal{V}_{\mathbb{R}}(I_2[x_{-i,j}]).$$
As $q_r^2(x_{-i,j}) \geq 0,~ \forall x_{-i,j} \in \mathbb{R}^{n-2}$, we have $\sum_r q_r^2(x_{-i,j}) \geq 0, ~\forall x_{-i,j} \in \{0,1\}^{n-2}$, and $$- \partial^2 F(x_{-i,j})/ \partial x_i \partial x_j \geq 0,~\forall x_{-i,j} \in \{0,1\}^{n-2}.$$ The result follows from Proposition \ref{prop:submod.mle}, $(iii').$
\Halmos
\endproof

We use the notation $S^n$ to denote the set of symmetric matrices in $\mathbb{R}^{n \times n}.$ 

\proof{Proof of Proposition \ref{prop:tractability.sos.submod}.} Let $f:2^{\Omega} \rightarrow \mathbb{R}$ be a set function and let $F$ be its MLE. Following \citep[Section 3.3.5]{blekherman2012semidefinite}, checking whether $f$ is $t$-sos-submodular amounts to solving the following feasibility semidefinite program:
\begin{equation}\label{eq:SDP.2.sos.submod}
\begin{aligned}
&\min_{Q_{ij} \in S^{N}, i,j=1,\ldots,n, i\neq j} 0\\
&\text{s.t. } -\overline{\frac{\partial^2 F(x)}{\partial x_i x_j}} = \mbox{tr}\left(Q_{ij} \overline{z(x_{-i,j})z(x_{-i,j})^T}\right), \forall i,j=1,\ldots,n, i\neq j,\\
&Q_{ij}\succeq 0, \forall i,j=1,\ldots,n, i\neq j.
\end{aligned}
\end{equation}
Here, the overline refers to the normal form with respect to $I_2[x_{-i,j}]$ (see Appendix \ref{appendix:background.material}), $N=\sum_{k=0}^t \binom{n-2}{k}$, and $z(x_{-i,j})$ is the set of all multilinear monomials in $x_{-i,j}$ of degree less than or equal to $t$. As $x_{-i,j} \in \mathbb{R}^{n-2}$, this means that the size of $z(x_{-i,j})$ is $N$, which justifies the size of $Q$.
\Halmos
\endproof

\revision{To give the reader a clearer view of the SDP formulations of checking $t$-sos-submodularity for a given $t$, we derive the SDPs for testing $2$ and $3$-sos-submodularity on the function in Example \ref{ex:test.t.sos}. Code to solve these SDPs is provided  \href{https://mybinder.org/v2/gh/SOSSubmodularity/SOSSubmodularity_Revision/5c878bf0f0c92b63d781a20d07aabe9e26fcf9d1?filepath=ExampleEC1/ExampleEC1.ipynb}{here} which uses the open source solver SCS.


\begin{example}\label{example:SDP}
Recall the degree-$5$ set function $f$ defined on a ground set of size $n=5$ given in Example \ref{ex:test.t.sos}. Its multilinear extension $F$ is given by
$$
F(x_1,x_2,x_3,x_4,x_5)=3x_1x_2x_3x_4x_5-4x_3x_1x_2-9x_4x_1x_2-12x_1x_2x_4x_5-4x_1x_2x_3x_5-4x_1x_2x_3x_4+2.
$$
To determine whether $f$ is $2$-sos-submodular, we need to solve \eqref{eq:SDP.2.sos.submod} with $n=5,N=7,$ and $z(x_{-i,j})=[1,x_k,x_{\ell},x_m, x_kx_{\ell},x_{k}x_m,x_{\ell m}]^T$ for $\{k,\ell,m\}\in \{1,\ldots,5\} \backslash \{i,j\}$ and $i,j \in \{1,\ldots,5\}, i\neq j.$ This optimization problem can be reformulated as an explicit SDP as we see now. We set $i=1$ and $j=2$ to illustrate. Other values of $i$ and $j$ are dealt with identically. Let $Q_{12} \in S^7$ and denote by $Q_{12}^{k\ell}$ the $(k,\ell)$-entry of $Q_{12}$. The constraint 
$$-\overline{\frac{\partial^2 F(x)}{\partial x_1 x_2}} = \mbox{tr}\left(Q_{12} \overline{z(x_{-1,2})z(x_{-1,2})^T}\right)$$
requires us to derive the partial derivative of $F$ with respect to $x_1$ and $x_2:$
$$ -\frac{\partial^2 F}{\partial x_1\,\partial x_2}= -3x_3x_4x_5 + 4x_3x_4 + 4x_3x_5 + 4x_3 + 12x_4x_5 + 9x_4.$$
With $z(x_{-1,2})=[1,x_3,x_4,x_5,x_3x_4,x_3x_5,x_4x_5]^T,$ the constraint then becomes:
\begin{small}
\begin{align*}
&-3x_3x_4x_5 + 4x_3x_4 + 4x_3x_5 + 4x_3 + 12x_4x_5 + 9x_4\\
&=\mbox{tr}(Q_{12} \overline{z(x_{-1,2}) z(x_{-1,2})^T})\\
&= \mbox{tr} \left(Q_{12}\cdot  \begin{pmatrix}
1 & x_3 & x_4 & x_5 & x_3x_4 & x_3x_5 & x_4x_5 \\
x_3 & x_3 & x_3x_4 & x_3x_5 & x_3x_4 & x_3x_5 & x_3x_4x_5 \\
x_4 & x_3x_4 & x_4 & x_4x_5 & x_3x_4 & x_3x_4x_5 & x_4x_5 \\
x_5 & x_3x_5 & x_4x_5 & x_5 & x_3x_4x_5 & x_3x_5 & x_4x_5 \\
x_3x_4 & x_3x_4 & x_3x_4 & x_3x_4x_5 & x_3x_4 & x_3x_4x_5 & x_3x_4x_5 \\
x_3x_5 & x_3x_5 & x_3x_4x_5 & x_3x_5 & x_3x_4x_5 & x_3x_5 & x_3x_4x_5 \\
x_4x_5 & x_3x_4x_5 & x_4x_5 & x_4x_5 & x_3x_4x_5 & x_3x_4x_5 & x_4x_5
\end{pmatrix} \right)\\
&=Q_{12}^{11} + (2Q_{12}^{12}+Q_{12}^{22})x_3
     + (2Q_{12}^{13}+Q_{12}^{33})x_4
     + (2Q_{12}^{14}+Q_{12}^{44})x_5 + (2Q_{12}^{15}+2Q_{12}^{23}+2Q_{12}^{25}+2Q_{12}^{35}+Q_{12}^{55})x_3x_4 \\
&\quad + (2Q_{12}^{16}+2Q_{12}^{24}+2Q_{12}^{26}+2Q_{12}^{46}+Q_{12}^{66})x_3x_5  + (2Q_{12}^{17}+2Q_{12}^{34}+2Q_{12}^{37}+2Q_{12}^{47}+Q_{12}^{77})x_4x_5 \\[6pt]
&\quad + (2Q_{12}^{27}+2Q_{12}^{36}+2Q_{12}^{45}
        +2Q_{12}^{56}+2Q_{12}^{57}+2Q_{12}^{67})x_3x_4x_5 .
\end{align*}
\end{small}
By matching coefficients of the polynomial on the first line and the polynomial on the last three lines, we obtain the following linear equalities:
\begin{align*}
&Q_{12}^{11}=0, \quad 2Q_{12}^{12}+Q_{12}^{22}=4, \quad
2Q_{12}^{13}+Q_{12}^{33}=9, \quad
2Q_{12}^{14}+Q_{12}^{44}=0, \\
&2Q_{12}^{15}+2Q_{12}^{23}+2Q_{12}^{25}+2Q_{12}^{35}+Q_{12}^{55}=4, \quad 2Q_{12}^{16}+2Q_{12}^{24}+2Q_{12}^{26}+2Q_{12}^{46}+Q_{12}^{66}=4,\\
&2Q_{12}^{17}+2Q_{12}^{34}+2Q_{12}^{37}+2Q_{12}^{47}+Q_{12}^{77}=12, \quad  2Q_{12}^{27}+2Q_{12}^{36}+2Q_{12}^{45}
+2Q_{12}^{56}+2Q_{12}^{57}+2Q_{12}^{67}=-3,
\end{align*}
to be combined with the constraint $Q_{12}\succeq 0.$ We can then proceed in an identical fashion for all $(i,j)\in \{1,\ldots,n\}, i\neq j$. We thus obtain the following SDP reformulation of \eqref{eq:SDP.2.sos.submod}:
\begin{small}
\begin{equation} \label{eq:SDP.t=1}
\begin{aligned}
&\min_{Q_{12}, Q_{13}, Q_{14}, Q_{15}, Q_{23}, Q_{24}, Q_{25}, Q_{34}, Q_{35}, Q_{45}\in S^7} 0\\
\text{s.t. } &Q_{12}^{11}=0, \quad 2Q_{12}^{12}+Q_{12}^{22}=4, \quad
2Q_{12}^{13}+Q_{12}^{33}=9, \quad
2Q_{12}^{14}+Q_{12}^{44}=0, \\
&2Q_{12}^{15}+2Q_{12}^{23}+2Q_{12}^{25}+2Q_{12}^{35}+Q_{12}^{55}=4, \quad 2Q_{12}^{16}+2Q_{12}^{24}+2Q_{12}^{26}+2Q_{12}^{46}+Q_{12}^{66}=4,\\
&2Q_{12}^{17}+2Q_{12}^{34}+2Q_{12}^{37}+2Q_{12}^{47}+Q_{12}^{77}=12, \quad  2Q_{12}^{27}+2Q_{12}^{36}+2Q_{12}^{45}
+2Q_{12}^{56}+2Q_{12}^{57}+2Q_{12}^{67}=-3\\
&Q_{13}^{11}=0, \quad 2Q_{13}^{12}+Q_{13}^{22}=4, \quad 2Q_{13}^{13}+Q_{13}^{33}=0, \quad 2Q_{13}^{14}+Q_{13}^{44}=0, \\
&2Q_{13}^{15}+2Q_{13}^{23}+2Q_{13}^{25}+2Q_{13}^{35}+Q_{13}^{55}=4, \quad 2Q_{13}^{16}+2Q_{13}^{24}+2Q_{13}^{26}+2Q_{13}^{46}+Q_{13}^{66}=4,\\
&2Q_{13}^{17}+2Q_{13}^{34}+2Q_{13}^{37}+2Q_{13}^{47}+Q_{13}^{77}=0,\quad 2Q_{13}^{27}+2Q_{13}^{36}+2Q_{13}^{45}
+2Q_{13}^{56}+2Q_{13}^{57}+2Q_{13}^{67}=-3\\ 
& \vdots \\
& Q_{45}^{11}=0, \quad 2Q_{45}^{12}+Q_{45}^{22}=0, \quad 2Q_{45}^{13}+Q_{45}^{33}=0, \quad
2Q_{45}^{14}+Q_{45}^{44}=0, \\
&2Q_{45}^{15}+2Q_{45}^{23}+2Q_{45}^{25}+2Q_{45}^{35}+Q_{45}^{55}=12, \quad
2Q_{45}^{16}+2Q_{45}^{24}+2Q_{45}^{26}+2Q_{45}^{46}+Q_{45}^{66}=0, \\
&2Q_{45}^{17}+2Q_{45}^{34}+2Q_{45}^{37}+2Q_{45}^{47}+Q_{45}^{77}=0, \quad
2Q_{45}^{27}+2Q_{45}^{36}+2Q_{45}^{45}+2Q_{45}^{56}+2Q_{45}^{57}+2Q_{45}^{67}=-3,\\
&Q_{12}, Q_{13}, Q_{14}, Q_{15}, Q_{23}, Q_{24}, Q_{25}, Q_{34}, Q_{35}, Q_{45} \succeq 0.
\end{aligned}
\end{equation}
\end{small}
The SDP \eqref{eq:SDP.t=1} can be solved numerically; see \href{https://mybinder.org/v2/gh/SOSSubmodularity/SOSSubmodularity_Revision/5c878bf0f0c92b63d781a20d07aabe9e26fcf9d1?filepath=ExampleEC1/ExampleEC1.ipynb}{here} for an implementation. In an echo of Example~\ref{ex:test.t.sos}, we find that the problem is infeasible, implying that $f$ is not $2$-sos-submodular. 

To test whether $f$ is $3$-sos-submodular, we change $z(x_{-i,j})$ to allow for multilinear monomials of degree up to 3, that is, we let $z(x_{-i,j})=[1,x_k,x_{\ell},x_m, x_kx_{\ell},x_{k}x_m,x_{\ell m},x_{k \ell m}]^T$ for $\{k,\ell,m\}\in \{1,\ldots,5\} \backslash \{i,j\}$ and $i,j \in \{1,\ldots,5\}, i\neq j.$ We then solve \eqref{eq:SDP.2.sos.submod} once again with decision variables $\{Q_{ij}\}_{i,j}$ that are of size $8 \times 8.$ Proceeding in a similar way as above for its derivation, we obtain the slightly larger SDP:
\begin{small}
\begin{equation} \label{eq:SDP.t=2}
\begin{aligned}
&\min_{Q_{12}, Q_{13}, Q_{14}, Q_{15}, Q_{23}, Q_{24}, Q_{25}, Q_{34}, Q_{35}, Q_{45}\in S^8} 0\\
\text{s.t. } &Q_{12}^{11}=0, \quad
2Q_{12}^{12}+Q_{12}^{22}=4, \quad
2Q_{12}^{13}+Q_{12}^{33}=9, \quad
2Q_{12}^{14}+Q_{12}^{44}=0,\\
&2Q_{12}^{15}+2Q_{12}^{23}+2Q_{12}^{25}+2Q_{12}^{35}+Q_{12}^{55}=4, \quad
2Q_{12}^{16}+2Q_{12}^{24}+2Q_{12}^{26}+2Q_{12}^{46}+Q_{12}^{66}=4,\\
&2Q_{12}^{17}+2Q_{12}^{34}+2Q_{12}^{37}+2Q_{12}^{47}+Q_{12}^{77}=12,\\
&2Q_{12}^{18}+2Q_{12}^{27}+2Q_{12}^{28}+2Q_{12}^{36}+2Q_{12}^{38}+2Q_{12}^{45}+2Q_{12}^{48}
+2Q_{12}^{56}+2Q_{12}^{57}+2Q_{12}^{58}+2Q_{12}^{67}+2Q_{12}^{68}+2Q_{12}^{78}+Q_{12}^{88}=-3\\
& \vdots \\
&Q_{45}^{11}=0, \quad
2Q_{45}^{12}+Q_{45}^{22}=0, \quad
2Q_{45}^{13}+Q_{45}^{33}=0, \quad
2Q_{45}^{14}+Q_{45}^{44}=0,\\
&2Q_{45}^{15}+2Q_{45}^{23}+2Q_{45}^{25}+2Q_{45}^{35}+Q_{45}^{55}=12, \quad
2Q_{45}^{16}+2Q_{45}^{24}+2Q_{45}^{26}+2Q_{45}^{46}+Q_{45}^{66}=0,\\
&2Q_{45}^{17}+2Q_{45}^{34}+2Q_{45}^{37}+2Q_{45}^{47}+Q_{45}^{77}=0,\\
&2Q_{45}^{18}+2Q_{45}^{27}+2Q_{45}^{28}+2Q_{45}^{36}+2Q_{45}^{38}+2Q_{45}^{45}+2Q_{45}^{48}
+2Q_{45}^{56}+2Q_{45}^{57}+2Q_{45}^{58}+2Q_{45}^{67}+2Q_{45}^{68}+2Q_{45}^{78}+Q_{45}^{88}=-3\\
&Q_{12}, Q_{13}, Q_{14}, Q_{15}, Q_{23}, Q_{24}, Q_{25}, Q_{34}, Q_{35}, Q_{45} \succeq 0.
\end{aligned}
\end{equation}
\end{small}
The SDP \eqref{eq:SDP.t=2} is feasible, and a feasible solution can be obtained numerically; see \href{https://mybinder.org/v2/gh/SOSSubmodularity/SOSSubmodularity_Revision/5c878bf0f0c92b63d781a20d07aabe9e26fcf9d1?filepath=ExampleEC1/ExampleEC1.ipynb}{here}. This establishes that $f$ is $3$-sos-submodular, and hence submodular.
\end{example}

}

\revision{We now move onto the proof of Proposition \ref{prop:bounds.on.t}, which makes use of the following result which is an immediate corollary of \cite[Theorem 3.2]{sakaue2016exact}.

\begin{lemma}[\cite{sakaue2016exact}] \label{lem:sakaue}
Consider the binary polynomial optimization problem:
\begin{equation} \label{eq:bin.POP}
\begin{aligned}
&\min_z f(z)\\
&\text{s.t. } z\in \{-1,1\}^n,
\end{aligned}
\end{equation}
where $f$ is a polynomial of degree $d$ and in $n$ variables. The semidefinite programming-based relaxation of order $\lceil \frac{n+d-1}{2}\rceil$ in Lasserre's hierarchy is an exact reformulation of \eqref{eq:bin.POP}.
\end{lemma}

We can then combine Lemma \ref{lem:sakaue}, where we operate the affine change of variables $x_i=(z_i+1)/2$ for $i=1,\ldots,n$ to work on $\{0,1\}^n$ instead of $\{-1,1\}^n$, with known results in the sum of squares literature regarding the Lasserre hierarchy (see \cite[Section 6.2]{laurent2009sums}) to obtain the following corollary. 

\begin{corollary} \label{cor:sakaue}
Let $f$ be a degree-$d$, $n$-variate polynomial, nonnegative on $\{0,1\}^n$. For any $t \geq \lceil \frac{n+d-1}{2}\rceil,$ $f$ is $t$-sos modulo $I_2[x].$
\end{corollary}

}

\proof{Proof of Proposition \ref{prop:bounds.on.t}.} Let $f:2^\Omega \rightarrow \mathbb{R}$ be a set function of degree $d$ and let $F$ be its MLE. Note that $-\frac{\partial^2 F(x)}{\partial x_i \partial x_j}$ is of degree at most $d-2$ for any $i,j \in \{1,\ldots,n\}.$ To obtain monomials of degree $d-2$ when expanding a sum of squares polynomial, it must be the case that this sum of squares polynomial has degree at least $\lceil \frac{d-2}{2} \rceil.$

Now suppose that $f$ is of degree $d$ and is submodular. Thus, from Proposition \ref{prop:submod.mle}, it follows that $-\frac{\partial^2 F(x)}{\partial x_i \partial x_j} \geq 0, \forall x\in \{0,1\}^n$ and $i,j \in \{1,\ldots,n\}$. Fix $i,j \in \{1,\ldots,n\}$ and note that $-\frac{\partial^2 F(x)}{\partial x_i \partial x_j} $ is of degree $d-2$ and in $n-2$ variables. \revision{The fact that $-\frac{\partial^2 F(x)}{\partial x_i \partial x_j} $ is $t$-sos mod $I_2[x_{-i,j}]$ for $t$ at most $\lceil \frac{n+d-5}{2} \rceil$ follows Corollary \ref{cor:sakaue}.}
\Halmos
\endproof

\subsection{Proofs of Results in Section \ref{subsec:equiv.charac.sos.submod}.}\label{appendix:equiv.charac.sos.submod}

Recall the notation given in Definition \ref{def:conv.not.proof} in Section \ref{appendix:background.material}.

\begin{lemma} \label{lem:1.to.2}
Let $f$ be a set function and let $F$ be its MLE. We have:
\begin{align}
\frac{\partial F}{\partial x_i}(x)-\frac{\partial F}{\partial x_i}(y)&= \sum_{j=1, j \neq i}^n (y_j-x_j) \cdot \left( -\frac{\partial^2 F}{\partial x_i \partial x_j}\left([xy]^j\right)\right), \text{ for } i=1,\ldots,n \label{eq:for.ii}\\
F(x)-F(y)&=- \sum_{i=1}^n (y_i-x_i) \frac{\partial F}{\partial x_i}\left([xy]^i\right) \label{eq:for.v},\\
F(x)-F(y)&= -\sum_{i=1}^n (y_i-x_i) \frac{\partial F}{\partial x_i}\left([yx]^{i}\right) \label{eq:F.diff.flipped}
\end{align}
\end{lemma}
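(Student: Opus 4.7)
The plan is to establish all three identities via a single telescoping argument that exploits multilinearity of $F$, then obtain the second-derivative identity as a direct application of the first-derivative one.

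I would begin with \eqref{eq:for.v}. Observe that with the notation of Definition~\ref{def:conv.not.proof}, we have $[xy]^0 = y$ and $[xy]^n = x$, so $F(x) - F(y)$ telescopes as
\begin{equation*}
F(x) - F(y) \;=\; \sum_{i=1}^n \bigl( F([xy]^i) - F([xy]^{i-1}) \bigr).
\end{equation*}
The vectors $[xy]^i$ and $[xy]^{i-1}$ agree in every coordinate except the $i$-th, where they carry $x_i$ and $y_i$ respectively. Invoking Lemma~\ref{lem:extension.multilin} on the $i$-th coordinate (which converts $F$ into a linear function of that coordinate with slope $\partial F/\partial x_i$, independent of $x_i$ by Lemma~\ref{lem:deriv.multilin}), each telescope term equals $(x_i - y_i)\,\frac{\partial F}{\partial x_i}([xy]^i)$, and summing gives \eqref{eq:for.v} after pulling out the sign.

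For \eqref{eq:F.diff.flipped}, the same argument applies with the roles of $x$ and $y$ swapped: now $[yx]^0 = x$ and $[yx]^n = y$, so $F(y) - F(x) = \sum_{i=1}^n (F([yx]^i) - F([yx]^{i-1}))$, and the multilinear identity in coordinate $i$ gives a term $(y_i - x_i)\,\frac{\partial F}{\partial x_i}([yx]^i)$; negating both sides yields \eqref{eq:F.diff.flipped}.

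Finally, for \eqref{eq:for.ii}, I would set $G(z) \mathrel{\mathop{:}}= \frac{\partial F}{\partial x_i}(z)$. By Lemma~\ref{lem:deriv.multilin}, $G$ is a multilinear polynomial which is independent of $z_i$. Applying \eqref{eq:for.v} to $G$ (which is now established) gives
\begin{equation*}
G(x) - G(y) \;=\; -\sum_{k=1}^n (y_k - x_k)\,\frac{\partial G}{\partial x_k}([xy]^k),
\end{equation*}
and the term $k = i$ vanishes because $G$ does not depend on $x_i$; substituting back $G = \partial F/\partial x_i$ produces \eqref{eq:for.ii}. There is no real obstacle beyond bookkeeping on the indices in Definition~\ref{def:conv.not.proof} and invoking the two multilinearity lemmas at the right moments.
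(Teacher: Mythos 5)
Your proof is correct and follows essentially the same route as the paper's: both rest on telescoping $F$ (or $\partial F/\partial x_i$) along the interpolating vectors $[xy]^j$ and evaluating each increment via the multilinearity lemmas. The only cosmetic difference is that you obtain \eqref{eq:for.ii} by applying the already-proved \eqref{eq:for.v} to the multilinear polynomial $\partial F/\partial x_i$ (whose $k=i$ term vanishes), whereas the paper telescopes $\partial F/\partial x_i$ directly; the underlying computation is identical.
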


\proof{Proof.} We first show \eqref{eq:for.ii}.
Let $i \in \{1,\ldots,n\}$. Using telescopic sums, we have:
\begin{align*}
\frac{\partial F}{\partial x_i}(x)-\frac{\partial F}{\partial x_i}(y)=\frac{\partial F}{\partial x_i}([xy]^{n})-\frac{\partial F}{\partial x_i}([xy]^{0})=\sum_{j=1,j\neq i}^n \left( \frac{\partial F}{\partial x_i}([xy]^j)-\frac{\partial F}{\partial x_i}([xy]^{j-1}) \right).
\end{align*}
We observe that $[xy]^{j}$ and $[xy]^{j-1}$ only differ in entry $j$, that is, $[xy]^{j,j=1}=[xy]^{j-1,j=1}$ and $[xy]^{j,j=0}=[xy]^{j-1,j=0}$. Thus, from Lemmas \ref{lem:deriv.multilin} and \ref{lem:extension.multilin},
\begin{align*}
\frac{\partial F}{\partial x_i}([xy]^{j})-\frac{\partial F}{\partial x_i}([xy]^{j-1}) =&x_{j}\frac{\partial F}{\partial x_i}([xy]^{j, j=1})+(1-x_{j})\frac{\partial F}{\partial x_i}([xy]^{j,j=0})-y_{j}\frac{\partial F}{\partial x_i}([xy]^{j-1,j=1})\\
&-(1-y_{j})\frac{\partial F}{\partial x_i}([xy]^{j-1,j=0})\\
&=(y_j-x_j)\cdot \left( \frac{\partial F}{\partial x_i}([xy]^{j,j=0})- \frac{\partial F}{\partial x_i}([xy]^{j,j=1}) \right) \\
&=(y_{j}-x_{j}) \cdot \left( -\frac{\partial^2 F}{\partial x_i \partial x_{j}}([xy]^{j}) \right).
\end{align*}

We now show \eqref{eq:for.v}. Similarly to above, using telescopic sums, we have:
\begin{align*}
F(x)-F(y)=F([xy]^n)-F([xy]^0)=\sum_{j=1}^n \left( F([xy]^j)-F([xy]^{j-1})\right).
\end{align*}
As above, note that:
\begin{align*}
F([xy]^j)-F([xy]^{j-1})&=x_j F([xy]^{j,j=1})+(1-x_j) F([xy]^{j,j=0})\\
&-y_j F([xy]^{j-1,j=1})-(1-y_j)F([xy]^{j-1,j=0})\\
&=-(y_j-x_j)\frac{\partial F}{\partial x_j} ([xy]^j_{-j}),
\end{align*}
which gives the conclusion. Finally, we show \eqref{eq:F.diff.flipped}. We use telescopic sums to obtain:
\begin{align*}
F(x)-F(y)=F([yx]^0)-F([yx]^{n})=\sum_{j=1}^n \left( F([yx]^{j-1})-F([yx]^{j})\right).
\end{align*}
Similarly to before, we have:
\begin{align*}
F([yx]^{j-1})-F([yx]^{j})&=x_j F([yx]^{j-1,j=1})+(1-x_j)F([yx]^{j-1,j=0})\\
&-y_j F([yx]^{j,j=1})-(1-y_j)F([yx]^{j,j=0})\\
&=-(y_j-x_j) \frac{\partial F}{\partial x_j}([yx]^j),
\end{align*}
which leads to the conclusion.
\Halmos
\endproof

\proof{Proof of Theorem \ref{thm:equiv.sos.submod.ii}.}
Suppose that $f$ is $t$-sos-submodular. For $i,j \in \{1,\ldots,n\}, i\neq j,$ we have: $$-\frac{\partial^2 F(x)}{\partial x_i \partial x_j} \equiv \sum_{r} q_{r,ij}^2(x_{-i,j}) \mod{I_2[x_{-i,j}]}$$ where $q_{r,ij} \in \mathbb{R}[x_{-i,j}]$ and $\deg(q_{r,ij})\leq t$ for $r=1,\ldots,m.$ From Lemma \ref{lem:1.to.2}, for fixed $i=1,\ldots,n,$ we then have:
\begin{align*}
\frac{\partial F}{\partial x_i}(x)-\frac{\partial F}{\partial x_i}(y) &= \sum_{j=1, j\neq i}^n (y_j-x_j) \left( -\frac{\partial^2 F([xy]^j)}{\partial x_i \partial x_j} \right)\\
&\equiv \sum_{j=1, j\neq i}^n \sum_r (y_j-x_j)q_{r,ij}^2([xy]^j_{-i,j}) ~\text{ mod }I_2[[xy]^j_{-i,j}]. 
\end{align*}
Now, for any $j \in \{1,\ldots,n\}, j\neq i$,  note that $I_2[[xy]^j_{-i,j}] \subseteq I_1[x_{-i},y_{-i}]$ and 
$$(y_j-x_j)^2\equiv y_j-x_j \mod{I_1[x_{-i},y_{-i}]}.$$
Thus,
$$\frac{\partial F(x)}{\partial x_i}-\frac{\partial F(y)}{\partial x_i}=\sum_{r=1}^m \sum_{j=1, j\neq i}^n \left((y_j-x_j) q_{r,ij}([xy]^j_{-i,j}) \right)^2 \mod{I_1[x_{-i},y_{-i}]}.
$$
As $\deg(q_{r,ij}) \leq t$ for $q_{r,ij} \in \mathbb{R}[x_{-i,j}]$, $\deg\left( (y_j-x_j)q_{r,ij}([xy]^j_{-i,j})\right) \leq t+1$, which gives the result.

Now let $i \in \{1,\ldots,n\}$. Suppose that we have:
\begin{equation} \label{eq:def.partial}
\frac{\partial F(x)}{\partial x_i}-\frac{\partial F(y)}{ \partial x_i} \equiv \sum_r q_r^i(x_{-i},y_{-i})^2 \mod I_1[x_{-i},y_{-i}]
\end{equation} where $q_r^i \in \mathbb{R}[x_{-i},y_{-i}]$ is multilinear (see Remark \ref{remark:normal.form.qr}) and $\deg(q_r^i) \leq t+1$. Note that
\begin{align*}
\frac{\partial F(x)}{\partial x_i}-\frac{\partial F(x)}{ \partial x_i}=0 \equiv \sum_r q_r^i(x_{-i},x_{-i})^2 \mod I_1[x_{-i}].
\end{align*}
Thus, $q_r^i(x_{-i},x_{-i})\equiv 0 \mod{I_1[x_{-i}]}$ for any $r$. Now, let $j \in \{1,\ldots,n\}$ such that $j\neq i.$ We have:
\begin{align} \label{eq:relationship.F.partial}
- \frac{\partial^2 F(x)}{\partial x_i \partial x_j}=\frac{\partial F([x]^{j=0})}{\partial x_i}-\frac{\partial F([x]^{j=1})}{ \partial x_i}.
\end{align}
As $q_r^i$ in \eqref{eq:def.partial} is multilinear, we can write using Lemma \ref{lem:extension.multilin},
\begin{align*}
q_r^i(x_{-i},[x]_{-i}^{j=y_j}) &= x_j y_j q_r^i([x]_{-i}^{j=1},[x]_{-i}^{j=1})+x_j (1-y_j) q_r^i([x]_{-i}^{j=1},[x]_{-i}^{j=0})\\
&+(1-x_j) y_j q_r^i([x]_{-i}^{j=0},[x]_{-i}^{j=1})+(1-x_j)(1-y_j) q_r^i([x]_{-i}^{j=0},[x]_{-i}^{j=0})\\
&\equiv (y_j-x_j)q_r^i([x]_{-i}^{j=0},[x]_{-i}^{j=1}) \mod I_1[x_{-i},[x]^{j=y_j}_{-i}],
\end{align*}
where we have used the fact that $q_r^i(x_{-i},x_{-i})\equiv 0 \mod{I_1[x_{-i}]}$ (and $I_1[x_{-i}]\subseteq I_1[x_{-i},[x]^{j=y_j}_{-i}]$) and the fact that $x_jy_j\equiv x_j \mod I_1[x_{-i},[x]^{j=y_j}_{-i}]$. As $\deg(q_r^i)\leq t+1$ for $q_r^i \in \mathbb{R}[x_{-i},y_{-i}]$, it follows that $\deg(q_r^i) \leq t+1$ when $q_r^i \in \mathbb{R}[x_{-i},[x]_{-i}^{j=y_j}].$ As the normal forms of $q_r^i(x_{-i},[x]_{-i}^{j=y_j})$ and $q_r^i([x]_{-i}^{j=0},[x]_{-i}^{j=1})$ mod $I_1[x_{-i},[x]^{j=y_j}_{-i}]$ are themselves, we can use Lemma \ref{lem:props.normal.form} to conclude that $\deg(q_r^i([x]_{-i}^{j=0},[x]_{-i}^{j=1})) \leq t.$ Combining \eqref{eq:relationship.F.partial} and \eqref{eq:def.partial}, we then have that:
$$- \frac{\partial^2 F(x)}{\partial x_i \partial x_j}\equiv \sum_r q_r^i([x]_{-i}^{j=0},[x]_{-i}^{j=1})^2 \mod I_1[[x]_{-i}^{j=0},[x]_{-i}^{j=1}]$$
As $I_1[[x]_{-i}^{j=0},[x]_{-i}^{j=1}]=I_2[x_{-i,j}]$ and $\deg(q_r^i([x]_{-i}^{j=0},[x]_{-i}^{j=1})) \leq t$, $f$ is $t$-sos-submodular. \Halmos
\endproof

\proof{Proof of Theorem \ref{thm:equiv.sos.submod.i}.} Suppose that $f$ is $t$-sos-submodular. Using \eqref{eq:for.v} and \eqref{eq:F.diff.flipped} and the notation in Definition \eqref{def:conv.not.proof}, we have:
\begin{align}
G(x,y,s)&=F(x)+F(y)-F(x+y-s)-F(s) \notag \\
&= -\sum_{i=1}^n (y_i-s_i) \frac{\partial F}{\partial x_i}([x(x+y-s)]^i) -\sum_{i=1}^n (s_i-y_i) \frac{\partial F}{\partial x_i}([sy]^i) \notag \\
&=\sum_{i=1}^n (y_i-s_i) \left( \frac{\partial F}{\partial x_i}([sy]^i) - \frac{\partial F}{\partial x_i}([x(x+y-s)]^i) \right). \label{eq:G.expl}
\end{align}
Let $i \in \{1,\ldots,n\}.$ From Theorem \ref{thm:equiv.sos.submod.ii}, as $f$ is $t$-sos-submodular, we have
$$\frac{\partial F}{\partial x_i}(x) - \frac{\partial F}{\partial x_i}(y)  \equiv \sum_r q_{ri}^2(x_{-i},y_{-i}) \mod{I_1[x_{-i},y_{-i}]},$$
where $q_r(x_{-i},y_{-i}) \in \mathbb{R}[x_{-i},y_{-i}]$ and $\deg(q_{ri}(x_{-i},y_{-i}) )\leq t+1.$ This implies that
\begin{align*}
\frac{\partial F}{\partial x_i}([sy]^i) - \frac{\partial F}{\partial x_i}([x(x+y-s)]^i)  \equiv &\sum_r q_{ri}^2([sy]^i_{-i},[x(x+y-s)]^i_{-i}) \mod{I_1[[sy]^i_{-i},[x(x+y-s)]^i_{-i}]}.
\end{align*}
Note that $\deg(q_{ri}([sy]^i_{-i},[x(x+y-s)]^i_{-i})) \leq t+1.$ Substituting this in \eqref{eq:G.expl}, we get:
$$G(x,y,s)\equiv \sum_{i=1}^n \sum_r (y_i-s_i)q_{ri}^2([sy]^i_{-i},[x(x+y-s)]^i_{-i}) \mod{I_1[[sy]^i_{-i},[x(x+y-s)]^i_{-i}]}.$$
It is straightforward to show that $I_1[[sy]^i_{-i},[x(x+y-s)]^i_{-i}] \subseteq I_0[x,y,s].$
Furthermore, $y_i-s_i\equiv (y_i-s_i)^2 \mod{I_0[x,y,s]}$. Renaming $q_{ri}([sy]_{-i}^i,[x(x+y-s)]_{-i}^i)$ to be $\bar{q}_{ri}(x_{-i},y_{-i},s_{-i})$, we get that 
$$G(x,y,s) \equiv \sum_{i=1}^n \left((y_i-s_i) \cdot \bar{q}_{ri}(x_{-i},y_{-i},s_{-i})\right)^2 \mod{I_{0}[x,y,s]}.$$ 
As $\deg(\bar{q}_{ri}(x_{-i},y_{-i},s_{-i})) \leq t+1$, the result follows.

For the converse, we have that:
\begin{align} \label{eq:G.eq}
G(x,y,s)\equiv \sum_{r} q_r(x,y,s)^2 \mod{I_0[x,y,s]}
\end{align}
where $q_r \in \mathbb{R}[x,y,s]$ and $\deg(q_r) \leq t+2$. As mentioned in Remark \ref{remark:normal.form.qr}, we can take $q_r$ to be in normal form with respect to $I_0[x,y,s];$ in particular, we can assume that it is multilinear. 

It is not hard to see that $G(x,y,x) =0 $ for any $x,y$ and so $q_r(x,y,x)\equiv 0$ mod $I_0[x,y,x]$ for any $r$. Let $i \in \{1,\ldots,n\}$. Using this and the fact that $q_r$ is multilinear, we write, for fixed $r$:
\begin{align*}
q_r(x,y,[x]^{i=s_i})&=(1-x_i)y_is_i q_r([x]^{i=0}, [y]^{i=1}, [x]^{i=1})+x_iy_i(1-s_i)q_r([x]^{i=1}, [y]^{i=1}, [x]^{i=0})\\
&+(1-x_i)(1-y_i)s_i q_r([x]^{i=0}, [y]^{i=0}, [x]^{i=1})+x_i(1-y_i)(1-s_i) q_r([x]^{i=1}, [y]^{i=0}, [x]^{i=0})\\
&\equiv (x_i-s_i) q_r([x]^{i=1}, [y]^{i=0}, [x]^{i=0}) \mod{I_0[x,y,s]}.
\end{align*}
We have that $\deg(q_r(x,y,[x]^{i=s_i})) \leq t+2$ as a polynomial in $\mathbb{R}[x,y,s].$ Following a similar argument as the previous proof regarding normal forms, it follows that $\deg(q_r([x]^{i=1}, [y]^{i=0}, [x]^{i=0})) \leq t+1$. 

Now, from Lemma \ref{lem:deriv.multilin}, note that 
\begin{align*} 
\frac{\partial F(x)}{\partial x_i} -\frac{\partial F(y)}{\partial x_i}=F([x]^{i=1})+F([y]^{i=0})-F([y]^{i=1})-F([x]^{i=0})
=G([x]^{i=1},[y]^{i=0}, [x]^{i=0}).
\end{align*}
Plugging \eqref{eq:G.eq} into this, we get:
\begin{align*}
\frac{\partial F(x)}{\partial x_i} -\frac{\partial F(y)}{\partial x_i}&\equiv G([x]^{i=1},[y]^{i=0}, [x]^{i=0})\\
&\equiv \sum_r q_r^2([x]^{i=1},[y]^{i=0}, [x]^{i=0}) \mod{I_0[[x]^{i=1},[y]^{i=0}, [x]^{i=0}]}
\end{align*}
It can easily be shown that $I_0[[x]^{i=1},[y]^{i=0}, [x]^{i=0} \subseteq I_1[x_{-i},y_{-i}]$. Thus, 
\begin{align*}
\frac{\partial F(x)}{\partial x_i} -\frac{\partial F(y)}{\partial x_i} \equiv \sum_r q_r^2([x]^{i=1},[y]^{i=0}, [x]^{i=0}) \mod{I_1[x_{-i},y_{-i}]}
\end{align*}
As $\deg(q_r([x]^{i=1}, [y]^{i=0}, [x]^{i=0})) \leq t+1$, the result follows. 
\Halmos
\endproof

\proof{Proof of Theorem \ref{thm:equiv.sos.submod.v}.}
Suppose that $f$ is $t$-sos-submodular. Using Lemma \ref{lem:1.to.2}, we get that 
$$F(x)-F(y)+\sum_{i=1}^n (y_i-x_i)\frac{\partial F}{\partial x_i}(x)=\sum_{i=1}^n (y_i-x_i) \left( \frac{\partial F}{\partial x_i}(x)- \frac{\partial F}{\partial x_i}([xy]^i)\right).$$
From Theorem \ref{thm:equiv.sos.submod.ii}, as $f$ is $t$-sos-submodular, we have, for any $i \in \{1,\ldots,n\},$
$$\frac{\partial F}{\partial x_i}(x)- \frac{\partial F}{\partial x_i}([xy]^i) \equiv \sum_r q_{ri}^2(x_{-i},[xy]^i_{-i}) \mod I_1[x_{-i}, [xy]_{-i}^i],$$
where $\deg(q_{ri}(x_{-i},[xy]^i_{-i})) \leq t+1.$
Renaming $q_{ri}(x_{-i},[xy]^i_{-i})$ to $\bar{q}_{ri}(x_{-i},y_{-i})$ and noting that $\deg(\bar{q}_{ri}) \leq t+1$ and $I_1[x_i,[xy]_{-i}^i] \subseteq I_1[x_{-i},y_{-i}]$, we get that, for any $i \in \{1,\ldots,n\},$
$$\frac{\partial F}{\partial x_i}(x)- \frac{\partial F}{\partial x_i}([xy]^i) \equiv \sum_r \bar{q}_{ri}^2(x_{-i},y_{-i}) \mod I_1[x_{-i}, y_{-i}].$$
As $I_1[x_{-i}, y_{-i}] \subseteq I_1[x,y]$ and $y_i-x_i\equiv (y_i-x_i)^2$ mod $I_1[x,y]$ for any $i$, it follows that 
$$F(x)-F(y)+\sum_{i=1}^n (y_i-x_i)\frac{\partial F}{\partial x_i}(x)\equiv \sum_{i=1}^n \sum_r \left( (y_i-x_i)\cdot \bar{q}_{ri}(x_{-i},y_{-i})\right)^2 \mod{I_1[x,y]}$$
and we obtain the result.

For the converse, suppose that 
\begin{align} \label{eq:def.nemhauser}
H_1(x,y) \equiv \sum_r q_r^2(x,y) \mod I_1[x,y]
\end{align}
for $q_r \in \mathbb{R}[x,y]$ and $\deg(q_r)\leq t+2.$ As in Remark \ref{remark:normal.form.qr}, we can assume that $q_r$ is in normal form with respect to $I_1[x,y].$ In particular, $q_r$ is multilinear. It is straightforward to see that $H(x,x)=0$, thus $q_r(x,x)\equiv 0 \mod{I_1[x,y]}$ for any $r$. Likewise, $H([x]^{i=0},[x]^{i=1})=0$, which implies that $q_r([x]^{i=0},[x]^{i=1})\equiv 0 \mod{I_1[x,y]}.$ Now, fix $i,j \in \{1,\ldots,n\}$ with $i\neq j.$ Recalling the notation in Definition \ref{def:conv.not.proof} and multilinearity of $q_r$ and using the equivalences above, we have:
\begin{align*}
q_r(x,[x]^{i=y_i,j=y_j})&\equiv (1-x_i)(1-x_j)y_iy_j q_r([x]^{i=0,j=0},[x]^{i=1,j=1}) \mod I_1[x,[x]^{i=y_i,j=y_j}]\\
&\equiv (y_i-x_i)(y_j-x_j)q_r([x]^{i=0,j=0},[x]^{i=1,j=1}) \mod I_1[x,[x]^{i=y_i,j=y_j}].
\end{align*}
From this, we obtain, as done in previous proofs, that $\deg(q_r([x]^{i=0,j=0},[x]^{i=1,j=1}))\leq t.$ Now, note that:
$$-\frac{\partial^2 F(x)}{\partial x_i \partial x_j}=H_1([x]^{i=0,j=0}, [x]^{i=1,j=1}) \equiv \sum_r  q_r^2([x]^{i=0,j=0}, [x]^{i=1,j=1}) \mod I_1[[x]^{i=0,j=0}, [x]^{i=1,j=1}],$$
where we have used Lemma \ref{lem:deriv.multilin} and the equivalence from \eqref{eq:def.nemhauser}. As $I_1[[x]^{i=0,j=0}, [x]^{i=1,j=1}] \subseteq I_2[x_{-i,j}]$, and we have $\deg(q_r([x]^{i=0,j=0},[x]^{i=1,j=1}))\leq t$, it follows that $f$ is $t$-sos-submodular.
\Halmos
\endproof

\proof{Proof of Theorem \ref{thm:equiv.sos.submod.vi}.}
Suppose that $f$ is $t$-sos-submodular. Similarly to the proof of Theorem \ref{thm:equiv.sos.submod.v}, we can show that:
$$F(y)-F(x)-\sum_{i=1}^n (y_i-x_i)\frac{\partial F(y)}{\partial x_i}=\sum_{i=1}^n (y_i-x_i) \left( \frac{\partial F([yx]^i)}{\partial x_i}-\frac{\partial F(y)}{\partial x_i}\right).$$
The proof of the implication is then identical to that of Theorem \ref{thm:equiv.sos.submod.v}.

For the converse, note that, once again, $$-\frac{\partial^2 F(x)}{\partial x_i \partial x_j}=H_2([x]^{i=0,j=0},[x]^{i=1,j=1}).$$ 
The proof of the converse is then also identical to that of Theorem \ref{thm:equiv.sos.submod.v}.
\Halmos
\endproof

\proof{Proof of Theorem \ref{thm:equiv.sos.submod.iv}.} Suppose that $f$ is $t$-sos-submodular and let $i \in \{1,\ldots,n\}$. From Theorem~\ref{thm:equiv.sos.submod.ii}, we have that:
\begin{equation}\label{eq:def.partial.2}
\frac{\partial F(x)}{\partial x_i}-\frac{\partial F(y)}{ \partial x_i} \equiv \sum_r q_r^i(x_{-i},y_{-i})^2 \mod I_1[x_{-i},y_{-i}]
\end{equation} where $q_r^i \in \mathbb{R}[x_{-i},y_{-i}]$ and $\deg(q_r^i) \leq t+1$.
Using Lemma \ref{lem:1.to.2}, we have that:
\begin{align} \label{eqref:eq.diff1}
F(x+y-s)-F(x)=\sum_{i=1}^n (y_i-s_i) \frac{\partial{F}([(x+y-s)x]^i)}{\partial x_i}.
\end{align}
We also have:
\begin{align} \label{eqref:eq.diff2}
F(x+y-s)-F(y)=\sum_{i=1}^n (x_i-s_i) \frac{\partial F([(x+-s)y]^i}{\partial x_i}.
\end{align}
Now, using \eqref{eq:def.partial.2}, we get:
\begin{align*}
 \frac{\partial{F}([(x+y-s)x]^i)}{\partial x_i} \equiv \frac{\partial F(x)}{\partial x_i}- \sum_r q_r^i(x_{-i},[(x+y-s)x]_{-i}^i)^2 \mod{I_1[x_{-i},[(x+y-s)x]_{-i}^i]},
\end{align*}
where $\deg(q_r^i(x_{-i},[(x+y-s)x]_{-i}^i)) \leq t+1.$
Likewise,
\begin{align*}
 \frac{\partial F([(x+y-s)y]^i)}{\partial x_i}\equiv  \frac{\partial{F}(x+y-s)}{\partial x_i} + \sum_r &q_r^i([(x+y-s)y]^i_{-i},(x+y-s)_{-i})^2\\
 &\mod{I_1[[(x+y-s)y]^i_{-i},(x+y-s)_{-i}]},
\end{align*}
where $\deg(q_r^i([(x+y-s)y]^i_{-i},(x+y-s)_{-i})) \leq t+1.$
Plugging these equivalences back into \eqref{eqref:eq.diff1} and \eqref{eqref:eq.diff2} and noting that:
$$ I_1[x_{-i},[(x+y-s)x]_{-i}^i] \subseteq I_0[x,y,s] \text{ and }I_1[[(x+y-s)y]^i_{-i},(x+y-s)_{-i}] \subseteq I_0[x,y,s],$$
we get the following equivalences, modulo $I_0[x,y,s],$
\begin{align*}
F(x+y-s)-F(x)&\equiv \sum_{i=1}^n (y_i-s_i) \left( \frac{\partial F(x)}{\partial x_i} -\sum_r q_r^i(x_{-i},[(x+y-s)x]_{-i}^i)^2 \right)\\
F(x+y-s)-F(y)&\equiv \sum_{i=1}^n (x_i-s_i) \left( \frac{\partial{F}(x+y-s)}{\partial x_i}+ \sum_r q_r^i([(x+y-s)y]^i_{-i},(x+y-s)_{-i})^2 \right)
\end{align*}
Subtracting the first one from the second one, we obtain:
\begin{align*}
G_1(x,y,s) \equiv &\sum_{i=1}^n (y_i-s_i)\sum_r q_r^i(x_{-i},[(x+y-s)x]_{-i}^i)^2 \\
&+\sum_{i=1}^n (x_i-s_i)\sum_r q_r^i([(x+y-s)y]^i_{-i},(x+y-s)_{-i})^2 \mod{I_0[x,y,s]}.
\end{align*}
Noting that $(y_i-s_i)\equiv (y_i-s_i)^2 \mod I_0[x,y,s]$ and $(x_i-s_i) \equiv (x_i-s_i)^2 \mod{I_0[x,y,s]}$, we get that, modulo $I_0[x,y,s],$
\begin{align*}
G_1(x,y,s) \equiv &\sum_{i=1}^n \sum_r \left( \left((y_i-s_i) q_r^i(x_{-i},[(x+y-s)x]_{-i}^i) \right)^2+ \left( (x_i-s_i) q_r^i([(x+y-s)y]^i_{-i},(x+y-s)_{-i}) \right)^2 \right). 
\end{align*}
The result follows as the degree of each term in the square is less than or equal to $t+2$. 

For the converse, we have
$$G_1(x,y,s)\equiv \sum_r q_r^2(x,y,s) \mod I_0[x,y,s]$$
for some polynomials $q_r \in \mathbb{R}[x,y,s]$ with $\deg(q_r)\leq t+2.$ We remark that $G_1(x,y,x)=H_1(x,y)$. Furthermore, $I_0[x,y,x]=I_1[x,y].$ Thus, setting $s=x$ and renaming $q_r(x,y,x)$ to $\bar{q}_r(x,y)$ in the above equation gives us:
$$H_1(x,y)= G_1(x,y,x)\equiv \sum_r \bar{q}_r^2(x,y) \mod I_1[x,y],$$
where $\deg(\bar{q}_r) \leq t+2$. This gives us the result from Theorem \ref{thm:equiv.sos.submod.v}.
\Halmos
\endproof

\proof{Proof of Theorem \ref{thm:equiv.sos.submod.vii}.}
For the implication, using Lemma \ref{lem:1.to.2}, 
\begin{align*}
    F(x)-F(s)=\sum_{i=1}^n (x_i-s_i) \frac{\partial F([xs]^i)}{\partial x_i}   \text{ and }   F(y)-F(s)=\sum_{i=1}^n (y_i-s_i)\frac{\partial F([ys]^i)}{\partial x_i}.
\end{align*}
Then as $f$ is $t$-sos-submodular, from Theorem \ref{thm:equiv.sos.submod.ii}, \eqref{eq:def.partial.2} holds, which enables us to write
\begin{align*}
\frac{\partial F([xs]^i)}{\partial x_i} &\equiv \frac{\partial F(s)}{\partial x_i}-\sum_r q_{ri}(s,[xs]^i)^2 \mod I_1[s_{-i},[xs]^i_{-i}]\\
\frac{\partial F([ys]^i)}{\partial x_i} &\equiv \frac{\partial F(y)}{\partial x_i}+\sum_r q_{ri}(s,[ys]^i)^2 \mod I_1[s_{-i},[ys]^i_{-i}].
\end{align*}
for any $i\in \{1,\ldots,n\}.$ The conclusion follows in an identical way to the proof of Theorem \ref{thm:equiv.sos.submod.iv}.

For the converse, set, once again $s=x$ and note that $G_2(x,y,x)=H_2(x,y).$ The conclusion follows in an identical way to the proof of Theorem \ref{thm:equiv.sos.submod.iv}.
\Halmos
\endproof

\proof{Proof of Proposition \ref{prop:size}.} It is easy to see that Theorem \ref{thm:equiv.sos.submod.ii} involves $n$ sos constraints as opposed to Theorems \ref{thm:equiv.sos.submod.i}--\ref{thm:equiv.sos.submod.vii} which involve just one. To establish the size of the semidefinite programs under consideration, we consider the number of monomials in $(x,y,s)$ (resp. $(x,y)$) of degree at most $t$ corresponding to the Gr\"obner basis for $I_0[x,y,s]$ (resp. $I_1[x,y]$). 

For the case of $I_1[x,y],$ consider any monomial of degree $k \leq t$. It contains at most one of either $x_i$ or $y_i$ for fixed $i \in \{1,\ldots,n\}.$ Indeed, $x_iy_i \equiv x_i \mod I_1[x,y]$ and thus any appearance of $x_i$ and $y_i$ simultaneously will reduce to $x_i.$ As a consequence, one needs to pick $k$ indexes out of the $n$ possible and choose either $x_i$ or $y_i$ for each index. This corresponds to 2 choices. Thus, there are $\binom{n}{k} 2^k$ possible monomials of degree $k.$ As we are interested in the number of monomials of degree less than or equal to $t$, we get $\sum_{k=0}^t \binom{n}{k} 2^k.$ This explains the size of the semidefinite programs involved in Theorems \ref{thm:equiv.sos.submod.ii}, \ref{thm:equiv.sos.submod.v}, and \ref{thm:equiv.sos.submod.vi}.

For the case of $I_0[x,y,s],$ consider once again any monomial of degree $k \leq t.$ As before, it contains at most one of either $x_i,y_i$ or $s_i$ for fixed $i \in \{1,\ldots,n\}.$ Indeed, modulo $I_0[x,y,s]$, $x_iy_i\equiv s_i$, $x_is_i\equiv s_i$, and $y_is_i \equiv s_i.$ As a consequence, one needs to pick $k$ indexes out of $n$ and choose either $x_i, y_i$ or $s_i$ for each index. This corresponds to $3$ choices. Thus, there are $\binom{n}{k} 3^k$ possible monomials of degree $k$ and a total of $\sum_{k=0}^t \binom{n}{k} 3^k$ monomials of degree at most $t$. This explains the size of the semidefinite programs involved in Theorems \ref{thm:equiv.sos.submod.i}, \ref{thm:equiv.sos.submod.iv}, and \ref{thm:equiv.sos.submod.vii}.
\Halmos
\endproof

\subsection{Proofs of Results in Section \ref{subsec:ops.preserving.sos.submod}.} \label{appendix:ops.preserving.sos.submod}

\proof{Proof of Proposition \ref{prop:ops.maintain.sos.submod}.}
Nonnegative weighted sums is immediate from the fact that the set of $t$-sos polynomials are a cone and that differentiation is a linear operation. 
In the remainder, we let $F$ be the MLE of $f$ and $G$ be the MLE of $g$.

For complementation, we have $G(x)=F(\textbf{1}-x)$ for all $x \in \{0,1\}^n,$ where $\textbf{1}$ is the vector of size $n$ of all ones. Let $i,j \in \{1,\ldots,n\}$, we have:
$$-\frac{\partial G_(x)}{\partial x_i \partial x_j}=-\frac{\partial F(\textbf{1}-x)}{\partial x_i \partial x_j}.$$
As a $t$-sos polynomial remains $t$-sos when composed with an affine mapping, the result holds.

For restriction, setting $a=1_A$, we have $G(x)=F(a \circ x)$. Thus, for  $i,j \in \{1,\ldots,n\}$, we have:
$$-\frac{\partial G_(x)}{\partial x_i \partial x_j}=- a_ia_j\frac{\partial F(a\circ x)}{\partial x_i \partial x_j}.$$
Note that $a_ia_j \geq 0$ as $a_i,a_j \in \{0,1\}$. The result holds for the same reason as complementation. 

For contraction, setting $a=1_A$, we have $G(x)=F(a +x-a\circ x)-F(a)$.  Thus, for  $i,j \in \{1,\ldots,n\}$, we have:
$$-\frac{\partial G_(x)}{\partial x_i \partial x_j}=- (1-a_i)(1-a_j)\frac{\partial F(a\circ x)}{\partial x_i \partial x_j}.$$
Note that $(1-a_i)(1-a_j) \geq 0$ and the result follows similarly to restriction.
\Halmos
\endproof

The following lemmas are needed to show Proposition \ref{prop:ops.do.not.maintain.sos.submod}.

\begin{lemma} \label{prop:deg.n.budg.add}
Let $w_1=w_2=\ldots=w_n=2$ and let $B=2n-1$. Consider the set function $f(S)=\min \{B, \sum_{i \in S} w_i\}$. Its multilinear extension $F$ is given by $F(x)=\sum_{i=1}^n w_ix_i-\prod_{i=1}^n x_i$.
\end{lemma}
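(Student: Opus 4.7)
By Definition~\ref{def:mle}, the multilinear extension is the \emph{unique} multilinear polynomial $F \in \mathbb{R}[x]$ with $F(1_S) = f(S)$ for every $S \subseteq \Omega$. The candidate $F(x) = \sum_{i=1}^n w_i x_i - \prod_{i=1}^n x_i = 2\sum_{i=1}^n x_i - \prod_{i=1}^n x_i$ is clearly multilinear, so the plan is simply to verify the identity $F(1_S) = f(S)$ for all $S \subseteq \Omega$ and then invoke uniqueness.

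I would split the verification into two cases depending on whether $S = \Omega$ or not. If $S \subsetneq \Omega$, at least one coordinate of $1_S$ equals $0$, so the product term $\prod_{i=1}^n (1_S)_i$ vanishes, giving $F(1_S) = 2|S|$; on the other hand, $\sum_{i\in S} w_i = 2|S| \leq 2(n-1) < 2n-1 = B$, so $f(S) = 2|S|$ as well. If $S = \Omega$, then $F(1_\Omega) = 2n - 1$ while $f(\Omega) = \min\{2n-1, 2n\} = 2n-1$, so the two agree.

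Since $F$ is multilinear and matches $f$ on $\{0,1\}^n$, uniqueness of the MLE yields the stated formula. There is no real obstacle here; the proof is a direct case check, and the only subtlety is noticing that the choice $B = 2n-1$ is precisely tuned so that the cap $B$ is binding only at $S = \Omega$, which is exactly what causes the single correction term $-\prod_{i=1}^n x_i$ to suffice.
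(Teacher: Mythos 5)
Your proof is correct. It takes a genuinely more direct route than the paper's: you take the candidate polynomial as given, check $F(1_S)=f(S)$ pointwise (the product term kills itself off $\Omega$, and the cap $B=2n-1$ only binds at $S=\Omega$), and invoke uniqueness of the MLE. The paper instead \emph{derives} the formula via the M\"obius-type coefficient expression $a(T)=\sum_{S\subseteq T}(-1)^{t-s}f(S)$: it compares $f$ with the uncapped modular function $\tilde f(S)=\sum_{i\in S}w_i$, observes the two agree on all $|S|\le n-1$ so all coefficients $a(T)$ with $|T|<n$ match those of $\sum_i w_ix_i$, and then computes the single changed top coefficient $a(\Omega)=-1$. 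Your argument is shorter and requires only the uniqueness statement from Definition~\ref{def:mle}; the paper's argument is constructive in the sense that it would still produce the answer if the closed form were not guessed in advance. Both are complete; the underlying observation --- that the cap is active only at $S=\Omega$, so exactly one monomial is perturbed --- is the same in each.
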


We use the notation from Section \ref{subsec:set.funcs.extensions} in the proof. Namely, we write $$F(x)=\sum_{T \subseteq \Omega} a(T) \prod_{i \in T} x_i, \text{ where }a(T)=\sum_{S\subseteq T}(-1)^{t-s} f(S), \forall T \subset \Omega.$$

\proof{Proof.}
Let $\tilde{B}=2n$ and let $\tilde{f}(S)=\min\{ \tilde{B}, \sum_{i \in S} w_i\}=\sum_{i \in S} w_i.$  Let $\tilde{F}$ be the MLE of $\tilde{f}$ with coefficients $\tilde{a}_T$ for $T \subseteq \Omega.$ It is quite straightforward to see that $\tilde{F}(x)=\sum_{i=1}^n w_i x_i$. 
Now, note that $f$ and $\tilde{f}$ coincide on all sets $S$ with $|S|\leq n-1$ as $\min\{B,\sum_{i \in S} w_i\}=\sum_{i \in S} w_i$ for any $|S|<n.$ However, $f(\Omega)=2n-1$ and $\tilde{f}(\Omega)=2n$. It follows that 
$$a(T)=\sum_{S \subseteq T} (-1)^{t-s} f(S)=\sum_{S \subseteq T} (-1)^{t-s} \tilde{f}(S)=\tilde{a}(T), \forall |T|<n.$$
For $T=\Omega$, we then have
\begin{align*}
a(\Omega)&=\sum_{T \subseteq \Omega} (-1)^{n-|T|} f(T)=\sum_{T \subseteq \Omega, |T| \leq n-1} (-1)^{n-|T|} f(T)+(-1)^0 f(\Omega)\\
&= \sum_{T \subseteq \{1,\ldots,n\}, |T| \leq n-1} (-1)^{n-|T|} \tilde{f}(T)+\tilde{f}(\Omega)- \tilde{f}(\Omega)+ f(\Omega)\\
&= \tilde{a}(\Omega)-2n+2n-1=-1,
\end{align*}
which gives us the result.
\Halmos
\endproof

\begin{lemma} \label{lem:convolution}
For $n>1$, let $f:2^{\Omega}\rightarrow \mathbb{R}$ be the set function with MLE:
$$F(x)=1+\frac{n-1}{n}\sum_{i=1}^n x_i-\frac{2}{n-1}\sum_{1 \leq i<j \leq n} x_ix_j. $$
We have that $F(0)=1, F(1)=0,$ and $F(x)>1$ for all $x \neq 0,1.$
\end{lemma}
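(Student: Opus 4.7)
The plan is to proceed by direct computation, exploiting the permutation symmetry of $F$. Since $F$ is symmetric in its $n$ arguments, its value at any $x \in \{0,1\}^n$ depends only on the cardinality $k := \sum_i x_i$. So I will substitute $\sum_i x_i = k$ and $\sum_{i<j} x_i x_j = \binom{k}{2}$ to reduce $F$ to a single-variable expression in $k$, then analyze the resulting polynomial on $\{0,1,\ldots,n\}$.

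First I would verify the boundary cases by plugging in: for $k=0$, every monomial vanishes and $F(0)=1$; for $k=n$, a direct calculation gives
\[
F(\mathbf{1}) \;=\; 1 + \frac{n-1}{n}\cdot n - \frac{2}{n-1}\cdot\binom{n}{2}
\;=\; 1 + (n-1) - n \;=\; 0.
\]
Next, for $x \in \{0,1\}^n$ with $\sum_i x_i = k \in \{1,\ldots,n-1\}$, define
\[
g(k) \;:=\; F(x) - 1 \;=\; \frac{n-1}{n}\,k - \frac{k(k-1)}{n-1}.
\]
Since $k \geq 1$, $g(k) > 0$ is equivalent (after dividing by $k$ and clearing denominators) to $(n-1)^2 > n(k-1)$, i.e.,
\[
k \;<\; \frac{(n-1)^2}{n} + 1 \;=\; n - 1 + \frac{1}{n}.
\]
Since $k \leq n-1 < n - 1 + \tfrac{1}{n}$ by hypothesis, this inequality holds, and hence $F(x) > 1$.

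There is no real obstacle here: the argument is purely algebraic and the symmetry of $F$ in the $x_i$'s reduces the problem to verifying a quadratic inequality in the integer variable $k$. The only mild care needed is to make sure the computation is understood to be over $x \in \{0,1\}^n$ (the domain on which $F$ represents a set function), which is exactly the setting in which the claim is used.
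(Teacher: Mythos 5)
Your proposal is correct and follows essentially the same route as the paper's proof: reduce $F$ on $\{0,1\}^n$ to a function of $k=\sum_i x_i$ via symmetry, check $k=0$ and $k=n$ directly, and verify a quadratic inequality in $k$ for $1\leq k\leq n-1$. Your explicit reduction of the inequality to $k< n-1+\tfrac{1}{n}$ is a slightly more worked-out version of the paper's one-line claim that $\tfrac{n-1}{n}-\tfrac{k-1}{n-1}>0$, but the argument is the same.
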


\proof{Proof.}
It is straightforward to see that $F(0)=1$ and $F(1)=1+\frac{n-1}{n}\cdot n-\frac{2}{n-1}\cdot \frac{n(n-1)}{2}=0.$ Now, let $x\neq 0,1$  and suppose that $x$ has $k$ ones and $n-k$ zeros. We have that $\sum_{i=1}^n x_i=k$ and $\sum_{1 \leq i<j \leq n} x_ix_j=\binom{k}{2}$, thus:
$$F(x)=1+\frac{n-1}{n}\cdot k-\frac{2}{n-1}\frac{k(k-1)}{2}=1+k\left(\frac{n-1}{n}-\frac{k-1}{n-1} \right).$$
As $n-1 \geq k$, it can be shown that $\frac{n-1}{n}-\frac{k-1}{n-1} >0$, and so $F(x)>1$ for all $x \neq 0,1.$
\Halmos
\endproof

\begin{lemma} \label{lem:monotonization}
For $n>1$, let $f:2^{\Omega}\rightarrow \mathbb{R}$ be the set function with MLE:
$$F(x)=2\sum_{i=1}^n x_i-\frac{4n-2}{n(n-1)}\sum_{1 \leq i<j \leq n} x_ix_j. $$
We have that $F(0)=0, F(1)=1,$ and $F(x)>1$ for all $x \neq 0,1.$
\end{lemma}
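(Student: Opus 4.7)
The plan is to follow the same template as the proof of Lemma \ref{lem:convolution}. First I would verify $F(0) = 0$ by noting that every term in $F$ has at least one factor $x_i$ and vanishes, and compute $F(1) = 2n - \tfrac{4n-2}{n(n-1)} \cdot \binom{n}{2} = 2n - (2n-1) = 1$ directly.

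The main work is showing $F(x) > 1$ for $x \in \{0,1\}^n \setminus \{0,1\}$. For such an $x$, let $k = \sum_{i=1}^n x_i$, so that $1 \leq k \leq n-1$ and $\sum_{1 \leq i<j \leq n} x_i x_j = \binom{k}{2}$ (since $x_i \in \{0,1\}$). Substituting, one obtains
\[
F(x) = 2k - \frac{4n-2}{n(n-1)} \cdot \frac{k(k-1)}{2} = 2k - \frac{(2n-1)\,k(k-1)}{n(n-1)} \;=:\; g(k).
\]
So it suffices to show $g(k) > 1$ for every integer $k$ with $1 \leq k \leq n-1$.

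My plan for the lower bound is to exploit concavity. As a polynomial in the real variable $k$, the function $g(k)$ is quadratic with leading coefficient $-\tfrac{2n-1}{n(n-1)} < 0$, hence concave. The minimum of a concave function on a closed interval is attained at an endpoint, so it is enough to check the two boundary values. Direct computation gives $g(1) = 2$ and
\[
g(n-1) = 2(n-1) - \frac{(2n-1)(n-2)}{n} = \frac{2n(n-1) - (2n-1)(n-2)}{n} = 3 - \frac{2}{n}.
\]
Both are strictly greater than $1$ for every $n > 1$ (the second equals $2$ when $n=2$ and increases toward $3$), which yields $g(k) \geq \min\{g(1), g(n-1)\} > 1$ and completes the proof.

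I do not anticipate any serious obstacle: the only point of care is keeping the arithmetic correct when checking the two endpoint values, and making sure the concavity argument is applied on the interval $[1, n-1]$ (which is nonempty precisely because $n > 1$, matching the hypothesis). Note the $n=2$ case is degenerate in that the interval reduces to the single point $k=1$, and $g(1) = 2 > 1$ still holds, so no separate treatment is needed.
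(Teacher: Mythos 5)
Your proposal is correct and follows essentially the same route as the paper's proof: reduce $F$ on a $0$--$1$ vector with $k$ ones to a concave quadratic in $k$, and check the endpoint values $g(1)=2$ and $g(n-1)=3-\tfrac{2}{n}$, both exceeding $1$ for $n>1$. The arithmetic and the concavity argument match the paper's exactly.
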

\proof{Proof.}
It is straightforward to see that $F(0)=0$ and $F(1)=2n-\frac{4n-2}{n(n-1)}\cdot \frac{n(n-1)}{2}=1.$ Now let $x \neq 0,1$ and suppose that $x$ has $k$ ones as above. As before, we obtain
$$F(x)=2k-\frac{4n-2}{n(n-1)}\frac{k(k-1)}{2}.$$
We define $h(k)=2k-\frac{4n-2}{n(n-1)}\frac{k(k-1)}{2}$ for $1\leq k\leq n-1$ and show that $h(k)>1$ for all $k.$ This implies that $F(x)>1$ for all $x \neq 0,1.$ Note first that $h$ is concave in $k$ by, e.g., considering its second derivative. It follows that its minimum is at $k=1$ or $k=n-1.$ As $h(1)=2>1$ and $$h(n-1)=2(n-1)-\frac{4n-2}{n(n-1)}\cdot \frac{(n-1)(n-2)}{2}=3-\frac{2}{n}>1,$$
we obtain our result.
\Halmos
\endproof

\begin{lemma} \label{lem:example.n-2.sos}
Let $F(x)=x_1x_2\ldots x_n$ for all $x \in \{0,1\}^n.$ We have that $F$ is $n$-sos modulo $I_2[x]$ but not $(n-1)$-sos modulo $I_2[x].$
\end{lemma}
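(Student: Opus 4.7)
The plan is to handle the two halves separately, with the easy direction via a direct construction and the harder (lower bound) direction by a dimension/evaluation argument on multilinear polynomials.

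For the $n$-sos direction, the key observation is that modulo $I_2[x]$ we have $x_i^2 \equiv x_i$, and hence
\[
(x_1 x_2 \cdots x_n)^2 \;=\; x_1^2 x_2^2 \cdots x_n^2 \;\equiv\; x_1 x_2 \cdots x_n \pmod{I_2[x]}.
\]
Thus $F = F^2 \pmod{I_2[x]}$ is expressed as a single square of a polynomial of degree exactly $n$, so $F$ is $n$-sos mod $I_2[x]$.

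For the lower bound, I would argue by contradiction. Suppose $F \equiv \sum_{r=1}^m q_r^2 \pmod{I_2[x]}$ with $\deg(q_r) \leq n-1$. By Remark \ref{remark:normal.form.qr}, we may assume each $q_r$ is in normal form modulo $I_2[x]$, i.e., $q_r$ is multilinear; the degree bound then rules out the monomial $x_1 \cdots x_n$ appearing in any $q_r$. Since $F$ is the indicator (on $\{0,1\}^n$) of the point $\mathbf{1} = (1,\ldots,1)$, we have $F(s) = 0$ for every $s \in \{0,1\}^n \setminus \{\mathbf{1}\}$, hence $\sum_r q_r(s)^2 = 0$ on $\mathcal{V}_{\mathbb{R}}(I_2[x]) \setminus \{\mathbf{1}\}$, which forces $q_r(s) = 0$ for all $r$ and all such $s$.

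The finishing step uses the bijection between multilinear polynomials in $n$ variables and functions $\{0,1\}^n \to \mathbb{R}$: the space of multilinear polynomials vanishing on $\{0,1\}^n \setminus \{\mathbf{1}\}$ is one-dimensional, spanned by $x_1 x_2 \cdots x_n$ (which has degree $n$). Combined with the constraint $\deg(q_r) \leq n-1$, we conclude $q_r \equiv 0$ for every $r$, so $\sum_r q_r^2 \equiv 0 \pmod{I_2[x]}$. This contradicts $F(\mathbf{1}) = 1$, completing the argument. The main (though mild) obstacle is justifying the reduction to multilinear $q_r$ of degree $\leq n-1$, which is where Remark \ref{remark:normal.form.qr} is essential; once that is in hand, the evaluation/dimension argument is immediate.
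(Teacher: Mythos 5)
Your proposal is correct and follows essentially the same route as the paper's proof: the same single-square certificate $(x_1\cdots x_n)^2$ for the upper bound, and the same contradiction argument for the lower bound via evaluation on $\{0,1\}^n\setminus\{\mathbf{1}\}$ combined with uniqueness of the multilinear representation. Your explicit appeal to Remark~\ref{remark:normal.form.qr} to reduce to multilinear $q_r$ of degree at most $n-1$ is a slightly more careful rendering of a step the paper leaves implicit.
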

\proof{Proof.}
To see that $F$ is $n$-sos modulo $I_2[x]$, simply note that $F(x)\equiv (x_1x_2\ldots x_n)^2$ modulo $I_2[x].$ To show that $F$ is not $(n-1)$-sos mod $I_2[x],$ we proceed by contradiction. Suppose that
$F(x)\equiv \sum_r q_r(x)^2 \mod I_2[x]$
for some polynomials $q_r$ with degree strictly less than $n$. As $F(x)=0$ for any $x \neq 1,$ it follows that $q_r(x)=0$ for any $x \neq 1$ and all coefficients of $q_r$ involving monomials containing less than or equal to $n-1$ variables are equal to zero; see, e.g., \citep{grabisch2000equivalent}. As $q_r(x)$ does not contain the monomial $x_1x_2\ldots x_n$ (otherwise it would be of degree $n$), this implies that $q_r(x)=0$ for all $x$. But this contradicts the fact that $F(x)=1$ when $x=1.$
\Halmos
\endproof

\proof{Proof of Proposition \ref{prop:ops.do.not.maintain.sos.submod}.}
We start first with partial minimization. For $n\in \mathbb{N}, B=2n-1,$ and $w_1=\ldots=w_n=2,$ let $f:2^{n+1}\rightarrow \mathbb{R}$ be the set function with MLE:
$$F:(x,z) \in \{0,1\}^n\times \{0,1\} \mapsto Bz+\left(\sum_{i=1}^n w_ix_i\right) \cdot (1-z)$$
and let $g:2^n\rightarrow \mathbb{R}$ be the set function with MLE:
$G:x\in \{0,1\}^n \mapsto \min_{z \in \{0,1\}} G(x,z).$
It is straightforward to check that $f$ is $0$-sos-submodular by noting that 
$-\partial F(x,z)/\partial x_i\partial x_j=0$, for any $i\neq j$, and $-\partial F(x,z) /\partial x_i \partial z=w_i=\sqrt{2}^2$ for any $i.$ Now, note that $g(x)=\min\{B,\sum_i w_ix_i\}.$ Indeed if $B \leq \sum_i w_i x_i$ then $z=1$ and $G(x)=B$ and conversely, if $B\geq \sum_{i} w_i x_i$ then $z=0$ and $G(x)=\sum_i w_i x_i.$ Thus following Lemma \ref{prop:deg.n.budg.add}, we have that $G(x)=\sum_{i=1}^n w_ix_i-\prod_{i=1}^n x_i$ and $-\frac{\partial^2 G(x)}{\partial x_i \partial x_j}=\prod_{k \neq i,j} x_k$. The result follows from Lemma \ref{lem:example.n-2.sos}.

We now consider convolution. Let $f$ be as defined in Lemma \ref{lem:convolution} and take $u=0$. It is easy to see from Lemma \ref{lem:convolution} that $g(S)=\min_{T \subseteq S} f(T)$ is such that $g(S)=1$ for any $S\neq \Omega$ and $g(\Omega)=0.$ Thus, its MLE is equal to
$$G(x)=\sum_{S\neq \Omega} \left( \prod_{i \in S} x_i \prod_{i\notin S}(1-x_i)\right)=\sum_{S} \left( \prod_{i \in S} x_i \prod_{i\notin S}(1-x_i)\right)-\prod_{i=1}^n x_i=1-\prod_{i=1}^n x_i.$$ Taking second-order derivatives, we obtain: 
$$-\frac{\partial^2 G(x)}{\partial x_i \partial x_j}=\prod_{k=1,k\neq i,j}^n x_k.$$
The result follows from Lemma \ref{lem:example.n-2.sos}.

We finish with monotonization. Let $f$ be a set function as defined in Lemma \ref{lem:monotonization}. It is easy to see that $f$ is $0$-sos-submodular as its second-order derivatives are negative numbers. Now define $g(S)=\min_{T\supseteq S} f(T)$ for any $S\in \Omega.$ It is easy to see from Lemma \ref{lem:monotonization} that $g(\emptyset)=0$ but that $g(S)=F(1)=1$ for any set $S \neq \emptyset.$ Thus, its MLE is equal to:
$$G(x)=\sum_{S\neq \emptyset} \left( \prod_{i \in S} x_i \prod_{i\notin S}(1-x_i)\right)=\sum_{S} \left( \prod_{i \in S} x_i \prod_{i\notin S}(1-x_i)\right)-\prod_{i=1}^n (1-x_i)=1-\prod_{i=1}^n (1-x_i).$$
Taking second-order derivatives, we obtain: 
$$-\frac{\partial^2 G(x)}{\partial x_i \partial x_j}=\prod_{k=1,k\neq i,j}^n (1-x_k).$$
Note that for any polynomial $p$ over $\{0,1\}^n$, $p(x)$ is $t$-sos modulo $I_2[x]$ if and only if $p(1-x)$ is $t$-sos modulo $I_2[x].$ As $-\frac{\partial^2 G(1-x)}{\partial x_i \partial x_j}=\prod_{k \neq i,j} x_i$, the result follows from Lemma \ref{lem:example.n-2.sos}.
\Halmos
\endproof

\subsection{Proofs of Results in Section \ref{subsec:gap.sos.submod.submod}.}\label{appendix:gap.sos.submod.submod}

\proof{Proof of Proposition \ref{prop:low.degrees}.} Let $f:2^{\Omega} \rightarrow \mathbb{R}$ be a set function of degree $d$ and let $F$ be its MLE. When $d=0$, $F$ is a constant, and it is straightforward to check that, e.g., $(i')$ in Proposition \ref{prop:submod.mle} holds. Thus, $f$ is always submodular. Likewise, when $d=1$, $F$ is an affine function, that is $F(x)=a^Tx+b$ where $a \in \mathbb{R}^n, b\in \mathbb{R}.$ It is once again straightforward to check that $(i')$ in Proposition \ref{prop:submod.mle} holds. Indeed, we have
$$F(x)+F(y)-F(x+y-x\circ y)-F(x\circ y)=a^Tx+b+a^Ty+b-a^T(x+y-x \circ y)-b-a^T(x\circ y)-b=0.$$
Thus, $f$ is also submodular (in fact, it is modular). 

Now, let $d=2$ and suppose that $f$ is submodular. It follows from Proposition \ref{prop:submod.mle} $(iii')$ that $F_2^{ij}(x_{-i,j})\mathrel{\mathop{:}}=-\frac{\partial^2 F(x)}{\partial x_i \partial x_j} \geq 0$ for all $i,j \in \{1,\ldots,n\}$. As $\deg(f)=2,$ $F_2^{ij}$ is in fact equal to a nonnegative constant $C^{ij}$. Hence, we can write $$F_2^{ij}(x_{-i,j})\equiv \left(\sqrt{C^{ij}}\right)^2 \mod{I_2[x_{-i,j}]},$$
which shows that $f$ is $0$-sos-submodular.

Finally, let $d=3.$ \cite{billionnet1985maximizing} propose the following characterization of cubic submodular functions in Lemma 1: a cubic submodular set function $f$ is submodular if and only if its MLE $F$ can be written:
$$F(x)=\sum_{j \in J^-} c_j x_j-\sum_{j \in J^+} c_j x_j -\sum_{i,j \in IJ} c_{ij} x_ix_j -\sum_{(i,j,k) \in IJK^+} c_{ijk} x_ix_jx_+\sum_{(i,j,k) \in IJK^-} c_{ijk} x_ix_jx_k+K,$$
where $c_j \geq 0,\forall j \in J^-$, $c_j \geq 0, \forall j \in J^+,$ $c_{ij} \geq 0, \forall (i,j)\in IJ,$ $c_{ijk} \geq 0, \forall (i,j,k)\in IJK^+\cup IJK^-$, and $c_{ij} \geq \sum_{k~|~ (i,j,k)\in IJK^-} c_{ijk},\forall (i,j)\in \{1,\ldots,n\}^2, i<j$. Fix $i,j \in \{1,\ldots,n\}$, we have that:
\begin{align*}
-\frac{\partial^2 F(x)}{\partial x_i \partial x_j}&=c_{ij}+\sum_{k~|~(i,j,k)\in IJK^+ } c_{ijk}x_k-\sum_{k~|~(i,j,k)\in IJK^-} c_{ijk}x_k\\
&=c_{ij}-\sum_{k~|~ (i,j,k)\in IJK^-} c_{ijk}+\sum_{k~|~ (i,j,k)\in IJK^-} c_{ijk}+\sum_{k~|~(i,j,k)\in IJK^+ } c_{ijk}x_k-\sum_{k~|~(i,j,k)\in IJK^-} c_{ijk}x_k\\
&\equiv \left(\sqrt{c_{ij}-\sum_{k~|~ (i,j,k)\in IJK^-} c_{ijk}}\right)^2+\sum_{k~|~(i,j,k)\in IJK^+ } \left(\sqrt{c_{ijk}}x_k\right)^2+\sum_{k~|~(i,j,k)\in IJK^-} \left(\sqrt{c_{ijk}}(1-x_k)\right)^2,
\end{align*}
where the last equivalence is mod $I_2[x_{-i,j}]$ and uses the assumptions on the coefficients of $F$ given by \citep{billionnet1985maximizing}. This shows that $f$ is $1$-sos-submodular.
\Halmos
\endproof
The next lemma is implied by the the results in \citep{laurent2003lower,fawzi2015sparse,sakaue2016exact}. We restate it for clarity. It is needed for the proof of Proposition \ref{prop:CE.deg.4}.

\begin{lemma}[\cite{laurent2003lower,fawzi2015sparse,sakaue2016exact}] \label{lem:CE}
Let $$p(x)=\begin{cases} &2\sum_{1 \leq i<j \leq n}x_ix_j-(n-1)\sum_{i=1}^n x_i+\frac{n^2-1}{4}\text{ if $n$ odd}\\
&2\sum_{1 \leq i<j \leq n}x_ix_j-(n-2)\sum_{i=1}^n x_i+\frac{n(n-2)}{4}, \text{ if $n$ even}
\end{cases}, ~\text{ for } x\in \{0,1\}^n.$$
We have that $p$ is nonnegative (in fact $\left\lceil \frac{n}{2} \right\rceil$-sos when $n$ is odd and $\left\lceil \frac{n+1}{2} \right\rceil$-sos when $n$ is even) over $I_2[x]$ but not $\left(\left\lceil \frac{n}{2} \right\rceil -1\right)$-sos over $I_2[x].$
\end{lemma}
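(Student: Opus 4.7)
The plan is to reduce $p$ to the canonical Grigoriev--Laurent ``knapsack'' polynomial, where the result is well known, and then invoke the existing sos-degree analysis. First, since $x_i^2 \equiv x_i \mod I_2[x]$, the identity $(\sum_i x_i)^2 = \sum_i x_i + 2\sum_{i<j} x_i x_j$ gives $2\sum_{i<j} x_i x_j \equiv s^2 - s$, where $s := \sum_i x_i$. Substituting and completing the square yields
\begin{equation*}
p(x) \equiv \left(s - \tfrac{n}{2}\right)^2 - \tfrac{1}{4} \mod I_2[x] \text{ for $n$ odd}, \quad p(x) \equiv \left(s - \tfrac{n-1}{2}\right)^2 - \tfrac{1}{4} \mod I_2[x] \text{ for $n$ even}.
\end{equation*}
Nonnegativity on $\{0,1\}^n$ is then immediate: $s$ is integer-valued while the shift $c \in \{n/2,(n-1)/2\}$ is a half-integer, so $|s-c| \geq \tfrac{1}{2}$ and $p \geq 0$. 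The zero set consists of the two ``middle layers'' $s = \lfloor c \rfloor$ and $s = \lceil c \rceil$.

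For the upper sos bound, $p$ is symmetric in $x_1,\ldots,x_n$, so I would search for a symmetric sos certificate. Representation-theoretic symmetry reduction (via the action of $S_n$ on multilinear monomials, grouped by Young subgroups) decomposes the relevant SDP into small blocks whose rows are indexed by subset sizes. In this reduced SDP, one can explicitly build interpolating polynomials of degree $\lceil n/2\rceil$ (odd) or $\lceil (n+1)/2\rceil$ (even) that vanish on the two middle layers and whose squares sum to $p$ modulo $I_2[x]$. This is the construction carried out in \citep{fawzi2015sparse, sakaue2016exact}, and I would import it verbatim rather than re-derive the explicit formulas.

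The matching lower bound is the main obstacle, and here I would invoke the classical pseudo-expectation argument of \citep{laurent2003lower}. The goal is to construct a linear functional $L$ on $\mathbb{R}[x]/I_2[x]$, restricted to degree at most $2(\lceil n/2 \rceil - 1)$, that is nonnegative on all squares of polynomials of degree $\lceil n/2 \rceil - 1$ yet satisfies $L[p] < 0$; the existence of such an $L$ is dual to $p$ failing to admit an sos decomposition at that degree. By $S_n$-symmetry, $L$ may be taken invariant and is determined by the moment values $a_k := L\bigl[\prod_{i\in S}x_i\bigr]$ for $|S|=k$. The psd condition on $L$ reduces to psd-ness of a structured Gram matrix whose $(S,T)$-entry depends only on $|S|$, $|T|$, and $|S\cap T|$, i.e., an element of the Johnson scheme whose spectrum can be diagonalized in closed form. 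Choosing the $a_k$ to produce a matrix on the boundary of the psd cone while pushing $L[p]$ strictly negative is the technical heart of the argument; I expect the spectral bookkeeping for these Johnson-scheme matrices to be the main difficulty, and I would follow the explicit moment choice in \citep{laurent2003lower} rather than attempt a fresh construction.
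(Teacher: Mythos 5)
Your proposal is correct and follows essentially the same route as the paper: both identify $p$ with the canonical Grigoriev--Laurent polynomial (your completed square $(s-c)^2-\tfrac14$ with $s=\sum_i x_i$ is exactly the paper's $\tfrac12(\sum_i y_i)^2-\tfrac12$ after the affine change of variables $y=2x-1$, which preserves $t$-sos-ness) and then import the upper bound from \citep{fawzi2015sparse,sakaue2016exact} and the lower bound from \citep{laurent2003lower} rather than re-deriving them. The only minor point worth flagging is that for $n$ even the paper invokes the contraction argument of \citep[Proposition 3(i)]{laurent2003lower} to transfer the lower bound from the odd case, so you should make sure the moment construction you cite actually covers both parities.
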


\proof{Proof.} Suppose first that $n$ is odd. Let $I[y]=\langle y_1^2-1,\ldots,y_n^2-1\rangle$ and for $y \in \{-1,1\}^n$, define $$q_{odd}(y)=\sum_{1 \leq i<j \leq n} y_iy_j+\left \lfloor \frac{n}{2} \right \rfloor.$$
\revision{Note that $\left \lfloor \frac{n}{2} \right \rfloor < \frac{n}{2}$} and thus $-\left \lfloor \frac{n}{2} \right \rfloor > -\frac{n}{2}$. From \cite[Section 3]{laurent2003lower}, it follows that $q(y)$ is not $\left \lceil \frac{n}{2} \right \rceil -1$-sos over $I[y].$ Further note that:
$$q_{odd}(y)\equiv \frac12 \left( \sum_{i=1}^n y_i \right)^2-\frac{n}{2}+\left\lfloor \frac{n}{2}\right\rfloor \mod I[y]. $$
As $n$ is odd, $q_{odd}(y)=\frac{1}{2} \left(\sum_{i=1}^n y_i^2 \right)-\frac{1}{2}.$ As $\left(\sum_{i=1}^n y_i^2 \right) \geq 1$ when $y \in \{-1,1\}^n, $ it follows that $q_{odd}(y)\geq 0$ for all $y \in \{-1,1\}^n.$ Thus, from \citep{fawzi2015sparse}, as $q_{odd}$ is a nonnegative form over $\{-1,1\}^n$ and quadratic, $q_{odd}(y)$ is $\left \lceil \frac{n}{2} \right \rceil$-sos.

Suppose now that $n$ is even and define
$$q_{even}(y)=\sum_{1 \leq i \leq j \leq n}y_iy_j+\sum_{i=1}^n y_i+\frac{n}{2}.$$
This polynomial is obtained from $q_{odd}$ by considering the contraction of the complete graph over $n+1$ nodes to the complete graph over $n$ nodes. Following \cite[Proposition 3 (i)]{laurent2003lower}, we obtain that $q(y)$ is not $\left \lceil \frac{n}{2} \right \rceil -1$-sos over $I[y].$
Further note that:
$$q_{even}(y)\equiv \frac12 \left( 1+\sum_{i=1}^n y_i \right)^2-1/2 \mod I[y]. $$
For $n$ even, as $\left(\sum_{i=1}^n 1+y_i^2 \right) \geq 1$ when $y \in \{-1,1\}^n, $ it follows that $q_{even}(y)\geq 0$ for all $y \in \{-1,1\}^n.$ Thus, from \cite{sakaue2016exact}, as $q_{even}$ is a quadratic function, $q_{even}(y)$ is $\left \lceil \frac{n+1}{2} \right \rceil$-sos.

To get the result, note that if $x \in \{0,1\}^n$, then $2x-1 \in \{-1,1\}^n$. Thus, as affine mappings maintain $t$-sos, the result holds.
\Halmos
\endproof

\proof{Proof of Proposition \ref{prop:CE.deg.4}.} We show the case when $n$ is odd. The case where $n$ is even is identical. We compute the second-order partial derivatives of $F$. For any $i,j \in \{1,\ldots,n-2\}, i\neq j$, we have
\begin{align*}
    &-\frac{\partial^2 F(x)}{\partial x_i \partial x_j} = 2x_{n-1}x_n \equiv 2x_{n-1}^2x_n^2 \mod I_2[x_{-i,j}].
    \end{align*}
    For any $i\in \{1,\ldots,n\},$ we have:
    \begin{align*}
    &-\frac{\partial^2 F(x)}{\partial x_i \partial x_{n-1}} = \left(-(n-3)+2\sum_{j=1,j \neq i}^{n-2}x_j\right)x_n+(n-3)\equiv (n-3)(1-x_n)^2+2\sum_{j=1,j \neq i}^{n-2}x_j^2x_n^2 \mod I_2[x_{-i,n-1}] \\
    &-\frac{\partial^2 F(x)}{\partial x_i \partial x_{n}} = \left(-(n-3)+2\sum_{j=1,j \neq i}^{n-2}x_j\right)x_{n-1}+(n-3)\equiv (n-3)(1-x_{n-1})^2+2\sum_{j=1,j \neq i}^{n-2}x_j^2x_{n-1}^2 \mod I_2[x_{-i,n}].
\end{align*}
Finally, we have:
\begin{align*}
    &-\frac{\partial^2 F(x)}{\partial x_{n-1} \partial x_{n}} = 2\sum_{1 \leq i<j \leq n-2} x_ix_j +\frac{n^2-4n+3}{4}-(n-3)\sum_{i=1}^{n-2} x_i.
\end{align*}
Note that $-\frac{\partial^2 F(x)}{\partial x_i \partial x_j}$ is $2$-sos for any $i,j \in \{1,\ldots,n-2\}$, as is $-\frac{\partial^2 F(x)}{\partial x_i \partial x_{n-1}}$ and $-\frac{\partial^2 F(x)}{\partial x_i \partial x_n}$ for any $i\in \{1,\ldots,n\}.$ However, following Lemma \ref{lem:CE}, $-\frac{\partial^2 F(x)}{\partial x_{n-1}\partial x_n}$ is $\lceil \frac{n-2}{2} \rceil$-sos over $I_2[x_{-n-1,n}]$ but not $\left(\left\lceil \frac{n-2}{2} \right\rceil -1\right)$-sos over $I_2[x_{-n-1,n}]$. As $n\geq 4$ (given that $d\geq 4$), it follows that $F$ is $\lceil \frac{n-2}{2} \rceil$-sos-submodular but not $\left(\left\lceil \frac{n-2}{2} \right\rceil -1\right)$-sos-submodular.
\Halmos
\endproof

\proof{Proof of Proposition \ref{prop:hypercut}.}
Note that the function
$$ F(x) = \sum_{e \in E} w_e \left[\left(1-\prod_{v\in e} x_v\right) - \prod_{v\in e}(1-x_v)\right]
$$
is the multilinear extension of the weighted cut function on $\mathcal{H}$ and that if $\max_{e \in E} |e| = t$, then $\mathrm{deg}(F) = t$. Let $E_{ij} = \{e \in E: i, j \in e\}$. We have $$-\frac{\partial^2 F(x)}{\partial x_i \partial x_j} = \sum_{e \in E_{ij}} w_e \left[\prod_{v\in e, v\neq i, j} x_v + \prod_{v\in e, v\neq i,j}(1-x_v)\right].$$

Let $F_e^{ij}(x) = \prod_{v\in e, v\neq i, j} x_v + \prod_{v\in e, v\neq i,j}(1-x_v)$. We show that $F_{e}^{ij}(x)$ is $2 \lfloor t/2-1 \rfloor$-sos. If $t$ is even, then it is easy to see that $\deg(F_e^{ij}) \leq t-2$ and when $t$ is odd, degree cancellations lead to $\deg(F_e^{ij}) \leq t-3$. Furthermore, as $F_e^{ij}$ takes on values $0$ or $1$ for all $x \in \{0,1\}^n$, we have that $F_e^{ij}(x)=\left(F_e^{ij} (x)\right)^2$ for all $x \in \{0,1\}^n$. As $w_e\geq 0, \forall e$, this shows that $F$ is $(t-2)$-sos-submodular when $t$ is even and $(t-3)$-sos-submodular when $t$ is odd, i.e., $2 \lfloor t/2-1 \rfloor$-sos-submodular.
\Halmos
\endproof

\proof{Proof of Proposition \ref{prop:coverage}.}
The multilinear extension of $f$ can be written as
    $$F(x) = \sum_{\ell = 1}^m\left(1 - \prod_{k: \ell \in A_k}(1-x_k)\right).$$ 
    Note that the degree of $F$ is equal to the maximum number of sets any element of $\Omega$ can belong to. Suppose $t = \max_{\ell = 1, \ldots, m} | \{ k :  \ell \in A_k \}|$. Then we have $$-\frac{\partial^2 F(x)}{\partial x_i \partial x_j} = \sum_{\ell=1}^m \prod_{k: \ell \in A_i \cap A_j \cap A_k} (1-x_k),$$ which is a multilinear function of degree $t-2$. Since the above is equal to either 0 or 1 for $x \in \{0,1\}^n$, using arguments identical to above, the coverage function is $t-2$-sos-submodular.
\Halmos
\endproof

\proof{Proof of Proposition \ref{prop:concave}.} Let $F$ be the MLE of $f$. We have that:
$$-\frac{\partial^2 F(x)}{\partial x_i \partial x_j}\equiv \phi(\sum_{k \neq i,j} x_k +1)+\phi(\sum_{k \neq i,j} x_k+1)-\phi(\sum_{k \neq i,j} x_{k}+2)-\phi(\sum_{k \neq i,j}x_k) \mod I_2[x_{-i,j}].$$
As $\phi$ is concave of degree $2d$ and univariate, this implies that $-\phi'' \geq 0$, of degree $2d-2$, and univariate. It then follows from e.g. \cite{blekherman2012semidefinite} that $-\phi''$ is sos, and so $g(s,t)=\phi(\frac12 s+\frac12 t)-\frac12 \phi(s)-\frac12 \phi(t)=\sum_{r} (q_r(t,s))^2$, where $q_r$ is a polynomial in $s,t \in \mathbb{R}^2$ of degree less than or equal to $d$ \citep{ahmadi2013complete}. Taking $s=\sum_{k \neq i,j} x_k+2$ and $t=\sum_{j \neq i,j} x_k$ in the previous expression, we obtain that 
$$-\frac{\partial^2 F(x)}{\partial x_i \partial x_j}\equiv \sum_r q_r^2\left(\sum_{k\neq i,j} x_k+2,\sum_{k \neq i,j} x_k\right) \mod I_2[x_{-i,j}]$$
and thus $f$ is $d$-sos-submodular.
\Halmos
\endproof

\section{Proof of Results in Section \ref{sec:applications}}
\label{appendix:apps}

\subsection{Proof of Results in Section \ref{subsec:approx.submod}} \label{appendix:proofs.approx.submod}

We start with the proof of Proposition \ref{prop:approx.submod}. To show this proof, we use the equivalent definition of the submodularity ratio $\gamma^*(f)$ as given in \citep[Definition 1]{bian2017guarantees}. As defined there, $\gamma^*(f)$ is the largest scalar $\gamma$ such that:
\begin{align} \label{eq:def.gamma.star}
\sum_{\omega \in L \backslash S} (f(S \cup \{\omega\})-f(S)) \geq \gamma (f(L \cup S)-f(S)), \forall L, S \subseteq \{1,\ldots,n\}. 
\end{align}

\proof{Proof of Proposition \ref{prop:approx.submod}.}
We show that $\gamma^*(f)$ is also the largest $\gamma$ such that:
\begin{align} \label{eq:def.gamma.star.mle}
\sum_{i=1}^n (y_i-x_i)\frac{\partial F(x)}{\partial x_i}+\gamma(F(x)-F(y))\geq 0, \forall x,y \in \{0,1\}^n, x\leq y.
\end{align}
Let $x,y \in \{0,1\}^n$ such that $x \leq y$ and suppose that \eqref{eq:def.gamma.star} holds. We set $S$ be the set such that $x=1_S$. We further set $L$ to be the set such that $y-x=1_L$ (note that $x\leq y$ and so $y-x \in \{0,1\}^n$). We have that $L \cap S=\emptyset$ and that $y=1_{L \cup S}$. Recalling that $F(1_S)=f(S)$ for any $S \in \{1,\ldots,n\}$, \eqref{eq:def.gamma.star} gives us:
$$\sum_{\omega \in L} (F(1_{S\cup \{\omega\}})-F(x)) \geq \gamma (F(y)-F(x)).$$
Now, as $\omega \in L$ if and only if there exists $i \in \{1,\ldots,n\}$ such that $y_i-x_i=1$, i.e., $y_i=1$ and $x_i=0$, we get:
$$\sum_{i=1}^n (y_i-x_i) (F(x+e_i)-F(x)) \geq \gamma (F(y)-F(x)),$$
which is equivalent from Lemma \ref{lem:deriv.multilin} to \eqref{eq:def.gamma.star.mle}.

Conversely, suppose now that \eqref{eq:def.gamma.star.mle} holds for any $x,y \in \{0,1\}^n$ and $x \leq y.$ We show that \eqref{eq:def.gamma.star.mle} holds. Let $S,L \subseteq \{1,\ldots,n\}.$ We let $y=1_{L \cup S}$ and $x=1_{S}.$ Note that $(y_i-x_i)=1$ for $i \in \{1,\ldots,n\}$ if and only if $i \in L \backslash S.$ Thus \eqref{eq:def.gamma.star.mle} gives us:
$$\sum_{i \in L \backslash S} (F(x+e_i)-F(x))+\gamma (f(S)-f(S\cup L)) \geq 0$$
which is equivalent to
$\sum_{i \in L \backslash S} (f(S\cup \{i\})-f(S)) \geq \gamma (f(S\cup L)-f(S)),$
which is none other than \eqref{eq:def.gamma.star}. Our conclusion follows.
\Halmos
\endproof

\proof{Proof of Proposition \ref{prop:truncation}.}
Note that we have: $F=F_k+T_k,$ thus, by definition of $m$, 
$$\frac{\partial F(x)}{\partial x_i}=\frac{\partial F_k(x)}{\partial x_i}+\frac{\partial T_k(x)}{\partial x_i} \geq \frac{\partial F_k(x)}{\partial x_i} +m, \forall x \in [0,1]^n.$$
Furthermore, by the triangle inequality,
\begin{align*}
|F(x)-F(y)|=|F_k(x)+T_k(x)-F_k(y)-T_k(y)| \leq |F_k(x)-F_k(y)|+|T_k(x)-T_k(y)|.
\end{align*}
Using the mean-value theorem and H\"older's theorem, it can be shown that $|T_k(x)-T_k(y)| \leq M ||x-y||_1, \forall x,y \in [0,1]^n.$
Thus $$|F(x)-F(y)| \leq |F_k(x)-F_k(y)| + M ||x-y||_1, \forall x,y \in [0,1]^n.$$
As $F(x)-F(y) \leq 0$ for $x\leq y$ and $\gamma \geq 0$, combining the bounds above, we have: 
\begin{align*}
&\sum_{i=1}^n (y_i-x_i) \frac{\partial F(x)}{\partial x_i}+\gamma (F(x)-F(y)) \\
&\geq \sum_{i=1}^n (y_i-x_i) \left(\frac{\partial F_k(x)}{\partial x_i} +m \right) +\gamma(F_k(x)-F_k(y))- \gamma M ||x-y||_1\\
&=\sum_{i=1}^n (y_i-x_i) \left(\frac{\partial F_k(x)}{\partial x_i} +m- \gamma M \right) +\gamma(F_k(x)-F_k(y)), \forall x,y \in [0,1]^n, x \leq y.
\end{align*}
Thus, by solving \eqref{eq:approx.submod.trunc}, we obtain a lower bound on $\gamma^*(f).$
\Halmos
\endproof

\subsection{Proof of Results in Section \ref{subsec:diff.submod}} \label{appendix:proofs.diff.submod}

\proof{Proof of Proposition \ref{prop:irreducible}.}
We show the first claim of the proof, that is, for any irreducible submodular decomposition $(g, h)$ of $f$, any other decomposition $(g', h')$ that reduces to $(g, h)$ results in a greater submodular approximation of $f$ when using the same permutation $\pi$ of $\Omega$:
$$(g-\underbar{h}_\pi)(S) \leq (g'-\underbar{h}_\pi')(S), \forall S \subseteq \Omega.$$
Since $(g',h')$ can be reduced to $(g, h)$, there exists a non-modular submodular $p$ such that $g' = g + p$ and $h' = h + p$. For any $S \subseteq \Omega$ and any $\pi$ such that $S^{\pi}_{|S|}=S:$
    \begin{align*}
  (g-\underbar{h}_\pi)(S)  = (g' - p' - (\underbar{h' - p'})_\pi)(S)  = (g' - \underbar{h}_\pi')(S) + (\underbar{p}_\pi-p)(S)\leq (g' - \underbar{h}_\pi')(S)
    \end{align*}
The second equality comes from linearity of the subgradient of any submodular function. The inequality stems from the fact that $\underbar{p}_\pi$ is a subgradient of $\pi$, and therefore $\underbar{p}_\pi \leq p$. 

We now show the second claim, which states that the above no longer holds when using two different permutations $\pi_1$ and $\pi_2$ to produce sub-gradients of $h$ and $h'$, via an example. Let $f = g-h$, where $g(S) = \sum_{i \in S} a_i$ and $h(S) = \sqrt{|S|}$, for all $S \subseteq \Omega.$ Notice that it must be that $(g,h)$ is irreducible, since $g$ is modular. Now consider a reducible decomposition: $g' = g + \alpha h$ and $h' = (1+\alpha)h$. It is easy to see that $\underbar{h}_\pi(S) = \sum_i w_i^\pi$ with $w_i^\pi = \sqrt{\pi^{-1}(i)} - \sqrt{\pi^{-1}(i)-1}$ and $\pi^{-1}(i)$ being equal to the position of element $i$ in permutation $\pi$. Then:
\begin{align*}
    (g-\underbar{h}_{\pi_1})(X) = \sum_{i\in S}(a_i - w_i^{\pi_1}), \text{ and }
    (g'-\underbar{h}'_{\pi_2})(X) = \sum_{i\in S}(a_i - (1+\alpha)w_i^{\pi_1})+\alpha h(X).
\end{align*}
Consider any permutation $\pi_1$ with element 1 in the last position, that is $\pi_1^{-1}(1) = n$ and $\pi_2$ with element 1 in its first position, $\pi_2^{-1}(1) = 1$. We have that 
\begin{align*}
    (g-\underbar{h}_{\pi_1})(\{1\}) = a_1 - \left(\sqrt{n}-\sqrt{n-1}\right), \text{ and }
    (g'-\underbar{h}'_{\pi_2})(\{1\}) = a_i - (1+\alpha)1+\alpha = a_i - 1,
\end{align*}
and so $(g-\underbar{h}_{\pi_1})(\{1\}) \geq (g'-\underbar{h}'_{\pi_2})(\{1\}) ~ \forall n \in \mathbb{N}$.
\Halmos
\endproof

\proof{Proof of Proposition \ref{prop:opt.irreducible}.}
Suppose $(G^*,H^*)$ is not an irreducible decomposition. Then there exists a non-modular submodular $P \in \mathbb{R}[x]$ such that $G \mathrel{\mathop{:}}= G^* - P$ and $H \mathrel{\mathop{:}}= H^* -P$, with $F=G-H$, and $G$ and $H$ submodular. By linearity of the second derivative, we have that: $$\frac{\partial^2 H(x)}{\partial x_i\partial x_j} = \frac{\partial^2 H^*(x)}{\partial x_i\partial x_j} - \frac{\partial^2 P(x)}{\partial x_i\partial x_j}.$$ Further, due to submodularity, $\frac{\partial P(x)}{\partial x_i\partial x_j} \leq 0 ~\forall i, j \in 1, \ldots, n.$ In fact, there exists some $\bar{x} \in \{0,1\}^n, \bar{i}, \bar{j}$ such that $\frac{\partial P(\bar{x})}{\partial x_{\bar{i}}\partial x_{\bar{j}}} < 0$, since it is assumed that $P$ is non-modular.
Therefore 
\begin{align*}
    \sum_{x\in\{0,1\}^n}\sum_{i< j}\frac{\partial H(x)}{\partial x_i\partial x_j} = \sum_{x\in\{0,1\}^n}\sum_{i< j} \left( \frac{\partial H^*(x)}{\partial x_i\partial x_j} - \frac{\partial P(x)}{\partial x_i\partial x_j}\right)> \sum_{x\in\{0,1\}^n}\sum_{i< j} \frac{\partial H^*(x)}{\partial x_i\partial x_j},
\end{align*}
which is a violation of our assumption that $G^*, H^*$ is an optimal solution to \eqref{model:curv.maximization}.
\Halmos
\endproof

\proof{Proof of Proposition \ref{prop:np.hard.irreducible}.} We provide a reduction from $SUBMOD_4$ as defined in Proposition~\ref{prop:np.hardness}. Let $q$ be a set function of degree $4$ with MLE $Q.$ We let $f=q$, or equivalently, $F=Q$ and take $k=0$. We show that $Q$ is submodular if and only if $F=G-H$ for submodular $G$ and $H$ and $\sum_{x \in \{0,1\}^n} \sum_{i<j} \frac{\partial^2 H(x)}{\partial x_i \partial x_j} \geq k=0.$

If $Q$ is submodular, then let $G=Q$ and $H=0$ and the implication follows. Now suppose that there exist submodular $G$ and $H$ such that $Q=G-H$ with $\sum_{x \in \{0,1\}^n} \sum_{i<j} \frac{\partial^2 H(x)}{\partial x_i \partial x_j} \geq 0.$ As $H$ is submodular, we have that $ \frac{\partial^2 H(x)}{\partial x_i \partial x_j} \leq 0, \forall x \in \{0,1\}^n, \forall i\neq j$, following Proposition \ref{prop:submod.mle} (iii'). Combining this with the assumption, we get that $$ \frac{\partial^2 H(x)}{\partial x_i \partial x_j} = 0, \forall x \in \{0,1\}^n, \forall i\neq j.$$
This implies that $H$ is modular. Thus, 
$$\frac{\partial^2 Q(x)}{\partial x_i \partial x_j}=\frac{\partial^2 G(x)}{\partial x_i \partial x_j} \geq 0, \forall x \in \{0,1\}^n, \forall i\neq j,$$
where the equality follows from the fact that $Q=G-H$ and the inequality follows from the fact that $G$ is submodular. We obtain in consequence of Proposition \ref{prop:submod.mle} (iii') that $Q$ is submodular and the converse follows.
\Halmos
\endproof

\section{Details for Numerical Results} \label{appendix:numeric}
In this section, we provide additional details on the experimental set-up and numerical results for the experiments presented in Section \ref{sec:applications}. All experiments were executed on the ISyE High-Performance Computing cluster at Georgia Tech. We use Mosek \citep{mosek2024} as our solver, JuMP \citep{dunning2017jump}, and the SumOfSquares.jl library \citep{legat2020sumofsquares}. All code can be found \href{https://github.com/SOSSubmodularity/SOSSubmodularity_Revision}{here}.

\subsection{Details for Section \ref{subsec:regression}} \label{appendix:t.sos.regression}

\subsubsection{Details on regression methods.}  Given training data $\{x_i, y_i\}_{i=1}^m$, we solve the following problems to obtain a polynomial regressor, a $t$-sos-submodular regressor, and a polynomial regressor as in \citep{stobbe2012learning}, respectively:
\begin{align}
F_{poly}^k = &\argmin_{F \in \mathbb{R}[x], \deg(F)\leq k} \sum_{i=1}^m (y_i-F(x_i))^2 + \lambda \sum_{T \subseteq \Omega, |T| \leq k} a(T)^2, \label{eq:poly}\\
F_{t-sos-submod}^k = &\argmin_{F \in \mathbb{R}[x], \deg(F)\leq k} \sum_{i=1}^m (y_i-F(x_i))^2 + \lambda \sum_{T \subseteq \Omega, |T| \leq k} a(T)^2 \label{eq:sos.regression}\\
&\text{s.t. } F \text{ is $t$-sos submodular} \nonumber\\
F_{necessary-submod}^k = &\argmin_{F \in \mathbb{R}[x], \deg(F)\leq k} \sum_{i=1}^m (y_i-F(x_i))^2 + \lambda \sum_{T \subseteq \Omega, |T| \leq k} a(T)^2, \label{eq:stobbe.regression}\\
&\text{s.t. } a(\{i, j\}) + |a(T)| \leq 0, ~ \forall \{i, j\} \subset T, \forall T \in \Omega, |T| \leq k,  \nonumber
\end{align}
The second term in the objective functions is equivalent to a ridge regression penalty on the coefficients of the polynomial fit to the data. We solve the training problem for $\lambda \in \{0.0, 10^{-4}, 10^{-3}, 10^{-2}\}$, and select the best performing $\lambda$ using the validation set. This regularization is needed to avoid computational errors for both \eqref{eq:poly} and \eqref{eq:stobbe.regression}. We add it to \eqref{eq:sos.regression} too to ensure an apples-to-apples comparison. As explained in Section \ref{subsec:regression}, the constraints appearing in \eqref{eq:stobbe.regression} are sufficient for submodularity when $k\in \{2,3\}$ but are no longer sufficient when $k \geq 4$, in line with Proposition~\ref{prop:np.hardness}.

FlexSubNet fits a monotone submodular function $F_{FlexSubNet} = F^{N}(S;\theta)$ with the following recursive form: 
$F^n(S; \theta) = \phi_\theta(\alpha F^{n-1}(S; \theta) + (1-\alpha) m_\theta^n(S)), n = 1, \ldots, N, \text{ and } F^0(S; \theta) = m_\theta(S),$
where $m_\theta^n$ and $\phi$ are modular and concave functions that are expressed by neural networks parameterized by $\theta$. It learns $\theta$ via stochastic gradient descent on the problem
$\min_\theta \sum_{i=1}^m (y_i-F^N(x_i;\theta))^2.$
We set the hyperparameters for this model as follows: batch size to three, number of hidden layers of the neural networks to four, learning rate to 0.005, and we train for 100 epochs. All remaining hyperparameters are set to their default as stated in the original paper and code \citep{de2022neural}. Since the resulting model is sensitive to random initialization, we retrain with five different random initializations and select the best one using the validation set.

\subsubsection{Additional experiments.}
We show submodular regression results for two additional synthetic set functions following \citep{de2022neural}. We generate a log-determinantal function
$F_{LD}(S) = \log\det(I + \sum_{s\in S} z_sz_s^\top),$ and a facility location function $F_{FL}(S) = \sum_{i=1}^n \max_{s\in S}\frac{z_i^\top z_s}{\|z_i\| \|z_s\|},$
with $z_i \sim \text{unif}[0,1]^{10},\ i \in 1, \ldots, n$. We set $n = 15$, and vary additive Gaussian noise as outlined in Section \ref{subsec:regression}. The test performance of each method, averaged  over five random problem instances, is presented in Figure~\ref{fig:regression.logdet.and.facloc} for the log-determinantal and the facility location functions.

\begin{figure}
    \centering    \includegraphics[width=0.7\linewidth]{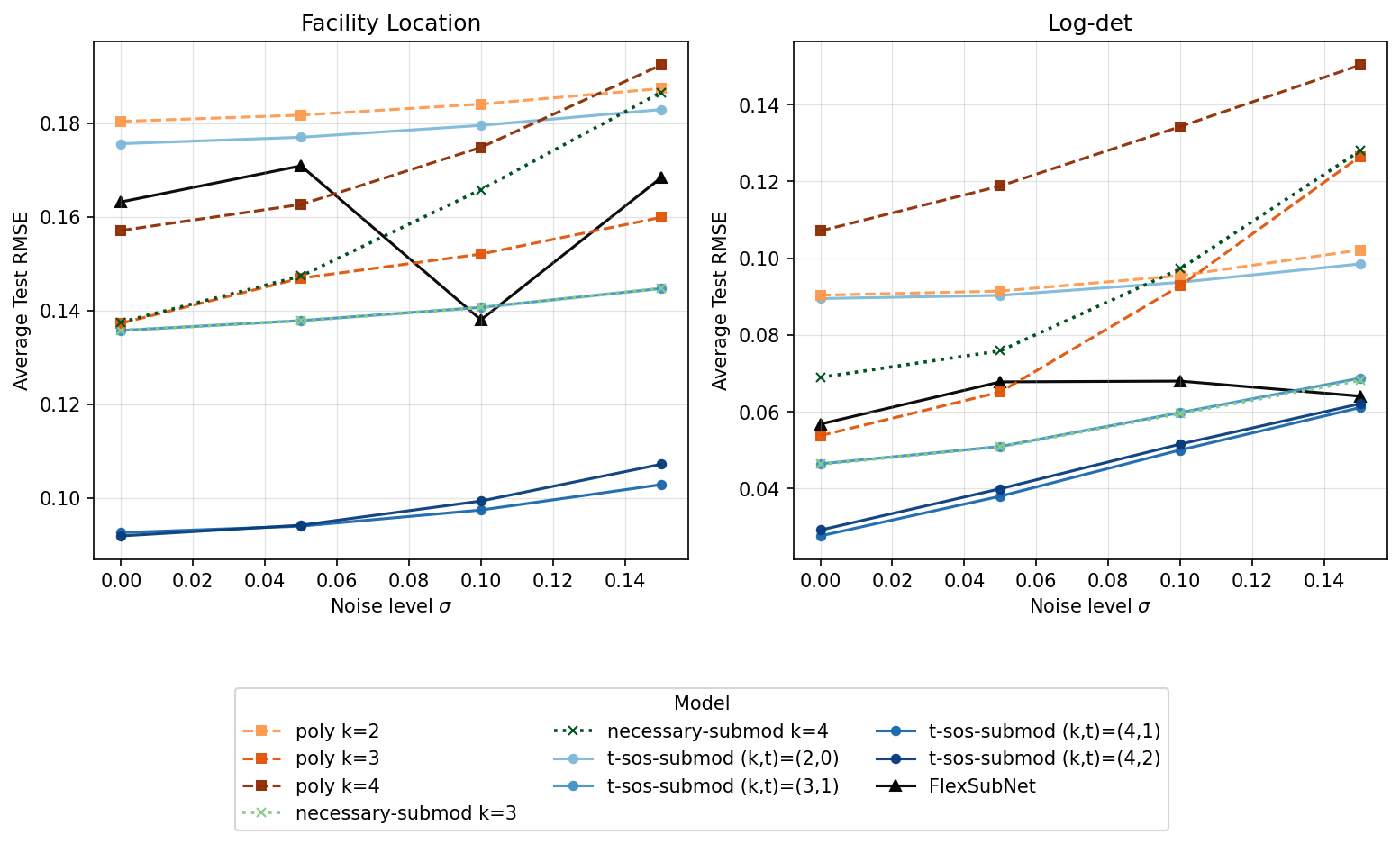}
    \caption{Comparison of root mean squared test error for various methods, fitted to noisy samples of facility location (right) and submodular log-determinant (left) functions.}
    \label{fig:regression.logdet.and.facloc}
\end{figure}

\subsubsection{Runtime.}
We report the average time to solve for $F^k_{poly}, F^k_{t-sos-submod}, F_{FlexSubNet}$ and $F_{necessary-submod}^k$ for $n=15$ while varying $k,t$ in Table \ref{tab:t.sos.regression.times}.

\begin{table}[h]
\centering
\resizebox{0.98\textwidth}{!}{
\begin{tabular}{|c|ccc|cccc|c|cc|}
\hline
Regression method & \multicolumn{3}{c|}{Polynomial ($k$)} & \multicolumn{4}{c|}{$t$-sos-submodular ($k,t$)} & FlexSubNet & \multicolumn{2}{c|}{ Nec.-submod. ($k$)} \\ \cline{1-11}
Parameters & $2$ & $3$ & $4$ & $(2,0)$ & $(3,1)$ & $(4,1)$ & $(4,2)$ & -- & 3 & 4\\ \hline
Average runtimes (s) & 5.56 & 5.90 & 7.30 & 18.26 & 20.07 & 32.68 & 5120.74 & 59.76 & 6.27 & 17.47 \\ \hline
\end{tabular}
}
\vspace{3mm}
\caption{Average training times for $F^k_{poly}$, $F^k_{t\text{-sos-submod}},F_{FlexSubNet}$ and $F_{necessary-submod}^k$ on synthetic data.}
\label{tab:t.sos.regression.times}
\end{table}

\subsection{Details for Section \ref{subsec:approx.submod}} \label{appendix:comp.submod}

We report the average time to solve for $\gamma^{t,k}_{trunc,sos}$ via \eqref{eq:approx.submod.trunc} for $n=10$ while varying $k,t$ in Table \ref{tab:approx.submod.times}.

\begin{table}[h]
    \centering
    \begin{tabular}{|c|c|c|c|c|}
    \hline
    $(k,t)$ & (1,1) & (2,2) & (3,2) & (4,3) \\
    \hline \hline
    Average runtime (s) &  1.22 & 12.5 & 13.72 & 7996\\
    \hline
    \end{tabular}
    \vspace{3mm}
    \caption{Average time to solve \eqref{eq:approx.submod.trunc} for $n=10$.
    }
    \label{tab:approx.submod.times}
\end{table}





\subsection{Details for Section \ref{subsec:diff.submod}} \label{appendix:diff.submod}
For ds decomposition, we seek to $\max \sum_{x\in\{0,1\}^n}\sum_{i<k}\frac{\partial^2H(x)}{\partial x_i\partial x_j}$, which we show how to write as a linear function in terms of the MLE coefficients $a(T):$
\begin{align*}
    \sum_{x\in\{0,1\}^n}\sum_{i<j}\frac{\partial^2H(x)}{\partial x_i\partial x_j} & = \sum_{x\in\{0,1\}^n}\sum_{i<j}\sum_{T \supseteq \{i, j\}} a(T) \prod_{k \in T\setminus\{i,j\}}x_k = \sum_{i<j}\sum_{T \supseteq \{i, j\}} a(T) \sum_{x\in\{0,1\}^n}\prod_{k \in T\setminus\{i,j\}}x_k\\
    & = \sum_{i<j}\sum_{T \supseteq \{i, j\}} a(T) 2^{n-(|T|-2)} = \sum_{T \subseteq \Omega: |T| \geq 2}a(T){|T|\choose 2} 2^{n-(|T|-2)}.
\end{align*}
Table \ref{tab:ds.decomp} shows the average optimality gap achieved over ten runs of the submodular-supermodular procedure for ten randomly generated set functions for each $n\in\{10,15,20\}$. Here we provide the disaggregated results, showing the average optimality gap attained for each individual randomly generated set function in Figure \ref{fig:ds.decomp.all}. These results show that not only does the $t$-sos-submodular-irreducible decomposition dominate on average, but also dominates each instance individually, showcasing the potential for drastic improvements. In Table~\ref{tab:ds.decomp.times} we report the average time it takes to solve \eqref{model:curv.maximization.sos} in order to obtain a $t$-sos-submodular irreducible decomposition.

\begin{figure}[h]
    \centering    \includegraphics[width=\linewidth]{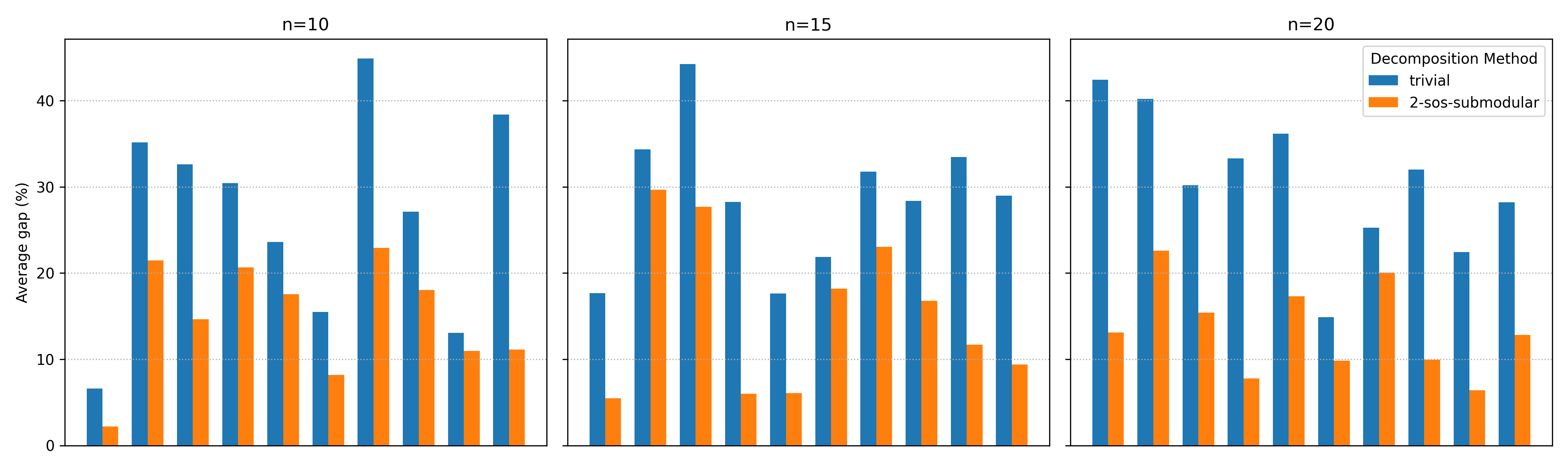}
    \caption{Comparison of average relative optimality gap obtained for ten runs of the submodular-supermodular procedure for the trivial decomposition and 2-sos-submodular-irreducible decomposition for each randomly generated set function in Section \ref{subsec:diff.submod}}
\label{fig:ds.decomp.all}
\end{figure}

\begin{table}[h]
    \centering
    \begin{tabular}{|c|c|c|c|}
    \hline
     & $n=10$ & $n=15$ & $n=20$ \\
    \hline \hline
    Average runtime (s) & 49.2 & 1,493 & 28,476 \\
    \hline
    \end{tabular}
    \vspace{3mm}
    \caption{Average time to solve \eqref{model:curv.maximization.sos} with $d=4$, $t=2$ for randomly generated set functions.
    }
    \label{tab:ds.decomp.times}
\end{table}

\vspace{-2mm}

\renewcommand{\refname}{E-Companion References}


\begin{thebibliography}{62}
\providecommand{\natexlab}[1]{#1}
\providecommand{\url}[1]{\texttt{#1}}
\expandafter\ifx\csname urlstyle\endcsname\relax
  \providecommand{\doi}[1]{doi: #1}\else
  \providecommand{\doi}{doi: \begingroup \urlstyle{rm}\Url}\fi

\bibitem[Ahmadi and Hall(2018)]{ahmadi2018dc}
Amir~Ali Ahmadi and Georgina Hall.
\newblock {DC} decomposition of nonconvex polynomials with algebraic
  techniques.
\newblock \emph{Mathematical Programming}, 169:\penalty0 69--94, 2018.

\bibitem[Ahmadi and Parrilo(2013)]{ahmadi2013complete}
Amir~Ali Ahmadi and Pablo~A Parrilo.
\newblock A complete characterization of the gap between convexity and
  sos-convexity.
\newblock \emph{SIAM Journal on Optimization}, 23\penalty0 (2):\penalty0
  811--833, 2013.

\bibitem[Ahmadi et~al.(2013)Ahmadi, Olshevsky, Parrilo, and
  Tsitsiklis]{ahmadi2013np}
Amir~Ali Ahmadi, Alex Olshevsky, Pablo~A Parrilo, and John~N Tsitsiklis.
\newblock {NP}-hardness of deciding convexity of quartic polynomials and
  related problems.
\newblock \emph{Mathematical Programming}, 137:\penalty0 453--476, 2013.

\bibitem[Atamt{\"u}rk and G{\'o}mez(2020)]{atamturk2020submodularity}
Alper Atamt{\"u}rk and Andr{\'e}s G{\'o}mez.
\newblock Submodularity in conic quadratic mixed 0--1 optimization.
\newblock \emph{Operations Research}, 68\penalty0 (2):\penalty0 609--630, 2020.

\bibitem[Bach(2019)]{bach2019submodular}
Francis Bach.
\newblock Submodular functions: from discrete to continuous domains.
\newblock \emph{Mathematical Programming}, 175:\penalty0 419--459, 2019.

\bibitem[Bach et~al.(2013)]{bach2013learning}
Francis Bach et~al.
\newblock Learning with submodular functions: A convex optimization
  perspective.
\newblock \emph{Foundations and Trends{\textregistered} in Machine Learning},
  6\penalty0 (2-3):\penalty0 145--373, 2013.

\bibitem[Balcan and Harvey(2011)]{balcan2011learning}
Maria-Florina Balcan and Nicholas~JA Harvey.
\newblock Learning submodular functions.
\newblock In \emph{Proceedings of the forty-third annual ACM symposium on
  Theory of computing}, pages 793--802, 2011.

\bibitem[Balcan and Harvey(2018)]{balcan2018submodular}
Maria-Florina Balcan and Nicholas~JA Harvey.
\newblock Submodular functions: Learnability, structure, and optimization.
\newblock \emph{SIAM Journal on Computing}, 47\penalty0 (3):\penalty0 703--754,
  2018.

\bibitem[Bian et~al.(2017)Bian, Buhmann, Krause, and
  Tschiatschek]{bian2017guarantees}
Andrew~An Bian, Joachim~M Buhmann, Andreas Krause, and Sebastian Tschiatschek.
\newblock Guarantees for greedy maximization of non-submodular functions with
  applications.
\newblock In \emph{International Conference on Machine Learning}, pages
  498--507. PMLR, 2017.

\bibitem[Bian et~al.(2020)Bian, Buhmann, and Krause]{bian2020continuous}
Yatao Bian, Joachim~M Buhmann, and Andreas Krause.
\newblock Continuous submodular function maximization.
\newblock \emph{arXiv preprint arXiv:2006.13474}, 2020.

\bibitem[Billionnet and Minoux(1985)]{billionnet1985maximizing}
Alain Billionnet and Michel Minoux.
\newblock Maximizing a supermodular pseudoboolean function: A polynomial
  algorithm for supermodular cubic functions.
\newblock \emph{Discrete Applied Mathematics}, 12\penalty0 (1):\penalty0 1--11,
  1985.

\bibitem[Bilmes(2022)]{bilmes2022submodularity}
Jeff Bilmes.
\newblock Submodularity in machine learning and artificial intelligence.
\newblock \emph{arXiv preprint arXiv:2202.00132}, 2022.

\bibitem[Blekherman et~al.(2012)Blekherman, Parrilo, and
  Thomas]{blekherman2012semidefinite}
Grigoriy Blekherman, Pablo~A Parrilo, and Rekha~R Thomas.
\newblock \emph{Semidefinite {O}ptimization and {C}onvex {A}lgebraic
  {G}eometry}.
\newblock SIAM, 2012.

\bibitem[Bomze and Locatelli(2004)]{bomze2004undominated}
Immanuel~M Bomze and Marco Locatelli.
\newblock Undominated {DC} decompositions of quadratic functions and
  applications to branch-and-bound approaches.
\newblock \emph{Computational Optimization and Applications}, 28\penalty0
  (2):\penalty0 227--245, 2004.

\bibitem[Boyd and Vandenberghe(2004)]{boyd2004convex}
Stephen~P Boyd and Lieven Vandenberghe.
\newblock \emph{Convex {O}ptimization}.
\newblock Cambridge university press, 2004.

\bibitem[Brandenburg et~al.(2024)Brandenburg, Grillo, and
  Hertrich]{brandenburg2024decomposition}
Marie-Charlotte Brandenburg, Moritz Grillo, and Christoph Hertrich.
\newblock Decomposition polyhedra of piecewise linear functions.
\newblock \emph{arXiv preprint arXiv:2410.04907}, 2024.

\bibitem[Burer and Natarajan(2025)]{burer2025semidefinite}
Samuel Burer and Karthik Natarajan.
\newblock On the semidefinite representability of continuous quadratic
  submodular minimization with applications to moment problems.
\newblock \emph{arXiv preprint arXiv:2504.03996}, 2025.

\bibitem[Chatterjee et~al.(2015)Chatterjee, Guntuboyina, and
  Sen]{chatterjee2015risk}
Sabyasachi Chatterjee, Adityanand Guntuboyina, and Bodhisattva Sen.
\newblock On risk bounds in isotonic and other shape restricted regression
  problems.
\newblock \emph{Annals of Statistics}, 43\penalty0 (4):\penalty0 1774--1800,
  2015.

\bibitem[Crama(1989)]{crama1989recognition}
Yves Crama.
\newblock Recognition problems for special classes of polynomials in 0--1
  variables.
\newblock \emph{Mathematical programming}, 44:\penalty0 139--155, 1989.

\bibitem[Crama and Hammer(2011)]{crama2011boolean}
Yves Crama and Peter~L Hammer.
\newblock \emph{Boolean {F}unctions: {T}heory, {A}lgorithms, and
  {A}pplications}.
\newblock Cambridge University Press, 2011.

\bibitem[Curmei and Hall(2025)]{curmei2025shape}
Mihaela Curmei and Georgina Hall.
\newblock Shape-constrained regression using sum of squares polynomials.
\newblock \emph{Operations Research}, 73\penalty0 (1):\penalty0 543--559, 2025.

\bibitem[Das and Kempe(2018)]{das2018approximate}
Abhimanyu Das and David Kempe.
\newblock Approximate submodularity and its applications: Subset selection,
  sparse approximation and dictionary selection.
\newblock \emph{Journal of Machine Learning Research}, 19\penalty0
  (3):\penalty0 1--34, 2018.

\bibitem[De and Chakrabarti(2022)]{de2022neural}
Abir De and Soumen Chakrabarti.
\newblock Neural estimation of submodular functions with applications to
  differentiable subset selection.
\newblock \emph{Advances in Neural Information Processing Systems},
  35:\penalty0 19537--19552, 2022.

\bibitem[Dolhansky and Bilmes(2016)]{dolhansky2016deep}
Brian~W Dolhansky and Jeff~A Bilmes.
\newblock Deep submodular functions: Definitions and learning.
\newblock \emph{Advances in Neural Information Processing Systems}, 29, 2016.

\bibitem[Dunning et~al.(2017)Dunning, Huchette, and Lubin]{dunning2017jump}
Iain Dunning, Joey Huchette, and Miles Lubin.
\newblock Jump: A modeling language for mathematical optimization.
\newblock \emph{SIAM Review}, 59\penalty0 (2):\penalty0 295--320, 2017.

\bibitem[Edmonds(2003)]{edmonds2003submodular}
Jack Edmonds.
\newblock Submodular functions, matroids, and certain polyhedra.
\newblock In \emph{Combinatorial Optimization—Eureka, You Shrink! Papers
  Dedicated to Jack Edmonds 5th International Workshop Aussois, France, March
  5--9, 2001 Revised Papers}, pages 11--26. Springer, 2003.

\bibitem[El~Halabi et~al.(2023)El~Halabi, Orfanides, and
  Hoheisel]{el2023difference}
Marwa El~Halabi, George Orfanides, and Tim Hoheisel.
\newblock Difference of submodular minimization via {DC} programming.
\newblock In \emph{International Conference on Machine Learning}, pages
  9172--9201. PMLR, 2023.

\bibitem[Feldman and Vondr{\'a}k(2015)]{feldman2015tight}
Vitaly Feldman and Jan Vondr{\'a}k.
\newblock Tight bounds on low-degree spectral concentration of submodular and
  xos functions.
\newblock In \emph{2015 IEEE 56th Annual Symposium on Foundations of Computer
  Science}, pages 923--942. IEEE, 2015.

\bibitem[Feldman and Vondr{\'a}k(2016)]{feldman2016optimal}
Vitaly Feldman and Jan Vondr{\'a}k.
\newblock Optimal bounds on approximation of submodular and xos functions by
  juntas.
\newblock \emph{SIAM Journal on Computing}, 45\penalty0 (3):\penalty0
  1129--1170, 2016.

\bibitem[Feldman et~al.(2013)Feldman, Kothari, and
  Vondr{\'a}k]{feldman2013representation}
Vitaly Feldman, Pravesh Kothari, and Jan Vondr{\'a}k.
\newblock Representation, approximation and learning of submodular functions
  using low-rank decision trees.
\newblock In \emph{Conference on Learning Theory}, pages 711--740. PMLR, 2013.

\bibitem[Frank(1993)]{frank1993submodular}
Andr{\'a}s Frank.
\newblock Submodular functions in graph theory.
\newblock \emph{Discrete Mathematics}, 111\penalty0 (1-3):\penalty0 231--243,
  1993.

\bibitem[Fujishige(2005)]{fujishige2005submodular}
Satoru Fujishige.
\newblock \emph{Submodular {F}unctions and {O}ptimization}, volume~58.
\newblock Elsevier, 2005.

\bibitem[Gallo and Simeone(1989)]{gallo1989supermodular}
Giorgio Gallo and Bruno Simeone.
\newblock On the supermodular knapsack problem.
\newblock \emph{Mathematical Programming}, 45:\penalty0 295--309, 1989.

\bibitem[Goemans et~al.(2009)Goemans, Harvey, Iwata, and
  Mirrokni]{goemans2009approximating}
Michel~X Goemans, Nicholas~JA Harvey, Satoru Iwata, and Vahab Mirrokni.
\newblock Approximating submodular functions everywhere.
\newblock In \emph{Proceedings of the twentieth annual ACM-SIAM symposium on
  Discrete algorithms}, pages 535--544. SIAM, 2009.

\bibitem[Goujaud et~al.(2024)Goujaud, Moucer, Glineur, Hendrickx, Taylor, and
  Dieuleveut]{goujaud2024pepit}
Baptiste Goujaud, C{\'e}line Moucer, Fran{\c{c}}ois Glineur, Julien~M
  Hendrickx, Adrien~B Taylor, and Aymeric Dieuleveut.
\newblock {PEP}it: computer-assisted worst-case analyses of first-order
  optimization methods in python.
\newblock \emph{Mathematical Programming Computation}, 16\penalty0
  (3):\penalty0 337--367, 2024.

\bibitem[Grabisch et~al.(2000)Grabisch, Marichal, and
  Roubens]{grabisch2000equivalent}
Michel Grabisch, Jean-Luc Marichal, and Marc Roubens.
\newblock Equivalent representations of set functions.
\newblock \emph{Mathematics of Operations Research}, 25\penalty0 (2):\penalty0
  157--178, 2000.

\bibitem[Helton and Nie(2010)]{helton2010semidefinite}
J~William Helton and Jiawang Nie.
\newblock Semidefinite representation of convex sets.
\newblock \emph{Mathematical Programming}, 122:\penalty0 21--64, 2010.

\bibitem[Horel and Singer(2016)]{horel2016maximization}
Thibaut Horel and Yaron Singer.
\newblock Maximization of approximately submodular functions.
\newblock \emph{Advances in neural information processing systems}, 29, 2016.

\bibitem[Iyer and Bilmes(2012)]{iyer2012algorithms}
Rishabh Iyer and Jeff Bilmes.
\newblock Algorithms for approximate minimization of the difference between
  submodular functions, with applications.
\newblock \emph{arXiv preprint arXiv:1207.0560}, 2012.

\bibitem[Jegelka and Bilmes(2011)]{jegelka2011submodularity}
Stefanie Jegelka and Jeff Bilmes.
\newblock Submodularity beyond submodular energies: coupling edges in graph
  cuts.
\newblock In \emph{Proc. 2011 IEEE Conf. on Computer Vision and Pattern
  Recognition}, pages 1897--1904. IEEE, 2011.

\bibitem[Krause et~al.(2008)Krause, Singh, and Guestrin]{krause2008near}
Andreas Krause, Ajit Singh, and Carlos Guestrin.
\newblock Near-optimal sensor placements in gaussian processes: Theory,
  efficient algorithms and empirical studies.
\newblock \emph{Journal of Machine Learning Research}, 9\penalty0 (2), 2008.

\bibitem[Laurent and Slot(2024)]{laurent2024overview}
Monique Laurent and Lucas Slot.
\newblock An overview of convergence rates for sum of squares hierarchies in
  polynomial optimization.
\newblock \emph{arXiv preprint arXiv:2408.04417}, 2024.

\bibitem[Legat(2020)]{legat2020sumofsquares}
Beno{\^\i}t Legat.
\newblock Sumofsquares.jl: Sum-of-squares optimization in julia.
\newblock \url{https://github.com/jump-dev/SumOfSquares.jl}, 2020.

\bibitem[Lin and Bilmes(2012)]{lin2012learning}
Hui Lin and Jeff~A Bilmes.
\newblock Learning mixtures of submodular shells with application to document
  summarization.
\newblock \emph{arXiv preprint arXiv:1210.4871}, 2012.

\bibitem[Majumdar et~al.(2020)Majumdar, Hall, and Ahmadi]{majumdar2020recent}
Anirudha Majumdar, Georgina Hall, and Amir~Ali Ahmadi.
\newblock Recent scalability improvements for semidefinite programming with
  applications in machine learning, control, and robotics.
\newblock \emph{Annual Review of Control, Robotics, and Autonomous Systems},
  3\penalty0 (1):\penalty0 331--360, 2020.

\bibitem[Narasimhan and Bilmes(2005)]{narasimhan2005submodular}
Mukund Narasimhan and Jeff Bilmes.
\newblock A submodular-supermodular procedure with applications to
  discriminative structure learning.
\newblock In \emph{Proc. 21st Ann. Conf. on Uncertainty in Artificial
  Intelligence (UAI'05)}, pages 404--412. AUAI Press, 2005.

\bibitem[Narayanan(1997)]{narayanan1997submodular}
Hariharan Narayanan.
\newblock \emph{Submodular {F}unctions and {E}lectrical {N}etworks}, volume~54.
\newblock Elsevier, 1997.

\bibitem[Nemhauser et~al.(1978)Nemhauser, Wolsey, and
  Fisher]{nemhauser1978analysis}
George~L Nemhauser, Laurence~A Wolsey, and Marshall~L Fisher.
\newblock An analysis of approximations for maximizing submodular set
  functions—i.
\newblock \emph{Mathematical programming}, 14:\penalty0 265--294, 1978.

\bibitem[O'Donoghue et~al.(2016)O'Donoghue, Chu, Parikh, and
  Boyd]{odowd2021scs}
Brendan O'Donoghue, Eric Chu, Neal Parikh, and Stephen Boyd.
\newblock Conic optimization via operator splitting and homogeneous self-dual
  embedding.
\newblock \emph{Journal of Optimization Theory and Applications}, 169\penalty0
  (3):\penalty0 1042--1068, 2016.

\bibitem[Owen(1972)]{owen1972multilinear}
Guillermo Owen.
\newblock Multilinear extensions of games.
\newblock \emph{Management Science}, 18\penalty0 (5-part-2):\penalty0 64--79,
  1972.

\bibitem[Punnen(2022)]{punnen2022quadratic}
Abraham~P Punnen.
\newblock The quadratic unconstrained binary optimization problem.
\newblock \emph{Springer International Publishing}, 10:\penalty0 978--3, 2022.

\bibitem[Queyranne and Schulz(1995)]{queyranne1995scheduling}
Maurice Queyranne and Andreas~S Schulz.
\newblock Scheduling unit jobs with compatible release dates on parallel
  machines with nonstationary speeds.
\newblock In \emph{International Conference on Integer Programming and
  Combinatorial Optimization}, pages 307--320. Springer, 1995.

\bibitem[Rubinstein and Singla(2017)]{rubinstein2017combinatorial}
Aviad Rubinstein and Sahil Singla.
\newblock Combinatorial prophet inequalities.
\newblock In \emph{Proceedings of the Twenty-Eighth Annual ACM-SIAM Symposium
  on Discrete Algorithms}, pages 1671--1687. SIAM, 2017.

\bibitem[Schrijver(2000)]{schrijver2000combinatorial}
Alexander Schrijver.
\newblock A combinatorial algorithm minimizing submodular functions in strongly
  polynomial time.
\newblock \emph{Journal of Combinatorial Theory, Series B}, 80\penalty0
  (2):\penalty0 346--355, 2000.

\bibitem[Seshadhri and Vondr{\'a}k(2014)]{seshadhri2014submodularity}
Comandur Seshadhri and Jan Vondr{\'a}k.
\newblock Is submodularity testable?
\newblock \emph{Algorithmica}, 69\penalty0 (1):\penalty0 1--25, 2014.

\bibitem[Sipos et~al.(2012)Sipos, Shivaswamy, and Joachims]{sipos2012large}
Ruben Sipos, Pannaga Shivaswamy, and Thorsten Joachims.
\newblock Large-margin learning of submodular summarization models.
\newblock In \emph{Proceedings of the 13th Conference of the European Chapter
  of the Association for Computational Linguistics}, pages 224--233, 2012.

\bibitem[Sipser(1996)]{sipser1996introduction}
Michael Sipser.
\newblock Introduction to the {T}heory of {C}omputation.
\newblock \emph{ACM Sigact News}, 27\penalty0 (1):\penalty0 27--29, 1996.

\bibitem[Stobbe and Krause(2012)]{stobbe2012learning}
Peter Stobbe and Andreas Krause.
\newblock Learning {F}ourier sparse set functions.
\newblock In \emph{Artificial Intelligence and Statistics}, pages 1125--1133.
  PMLR, 2012.

\bibitem[Topkis(1978)]{topkis1978minimizing}
Donald~M Topkis.
\newblock Minimizing a submodular function on a lattice.
\newblock \emph{Operations research}, 26\penalty0 (2):\penalty0 305--321, 1978.

\bibitem[Topkis(1998)]{topkis1998supermodularity}
Donald~M Topkis.
\newblock \emph{Supermodularity and {C}omplementarity}.
\newblock Princeton university press, 1998.

\bibitem[Tschiatschek et~al.(2014)Tschiatschek, Iyer, Wei, and
  Bilmes]{tschiatschek2014learning}
Sebastian Tschiatschek, Rishabh~K Iyer, Haochen Wei, and Jeff~A Bilmes.
\newblock Learning mixtures of submodular functions for image collection
  summarization.
\newblock \emph{Advances in neural information processing systems}, 27, 2014.

\bibitem[Yuille and Rangarajan(2003)]{yuille2003concave}
Alan~L Yuille and Anand Rangarajan.
\newblock The concave-convex procedure.
\newblock \emph{Neural computation}, 15\penalty0 (4):\penalty0 915--936, 2003.

\end{thebibliography}

\begin{thebibliography}{15}
\providecommand{\natexlab}[1]{#1}
\providecommand{\url}[1]{\texttt{#1}}
\expandafter\ifx\csname urlstyle\endcsname\relax
  \providecommand{\doi}[1]{doi: #1}\else
  \providecommand{\doi}{doi: \begingroup \urlstyle{rm}\Url}\fi

\bibitem[Ahmadi and Parrilo(2013)]{ahmadi2013complete}
Amir~Ali Ahmadi and Pablo~A Parrilo.
\newblock A complete characterization of the gap between convexity and
  sos-convexity.
\newblock \emph{SIAM Journal on Optimization}, 23\penalty0 (2):\penalty0
  811--833, 2013.

\bibitem[Bian et~al.(2017)Bian, Buhmann, Krause, and
  Tschiatschek]{bian2017guarantees}
Andrew~An Bian, Joachim~M Buhmann, Andreas Krause, and Sebastian Tschiatschek.
\newblock Guarantees for greedy maximization of non-submodular functions with
  applications.
\newblock In \emph{International Conference on Machine Learning}, pages
  498--507. PMLR, 2017.

\bibitem[Billionnet and Minoux(1985)]{billionnet1985maximizing}
Alain Billionnet and Michel Minoux.
\newblock Maximizing a supermodular pseudoboolean function: A polynomial
  algorithm for supermodular cubic functions.
\newblock \emph{Discrete Applied Mathematics}, 12\penalty0 (1):\penalty0 1--11,
  1985.

\bibitem[Blekherman et~al.(2012)Blekherman, Parrilo, and
  Thomas]{blekherman2012semidefinite}
Grigoriy Blekherman, Pablo~A Parrilo, and Rekha~R Thomas.
\newblock \emph{Semidefinite {O}ptimization and {C}onvex {A}lgebraic
  {G}eometry}.
\newblock SIAM, 2012.

\bibitem[De and Chakrabarti(2022)]{de2022neural}
Abir De and Soumen Chakrabarti.
\newblock Neural estimation of submodular functions with applications to
  differentiable subset selection.
\newblock \emph{Advances in Neural Information Processing Systems},
  35:\penalty0 19537--19552, 2022.

\bibitem[Dunning et~al.(2017)Dunning, Huchette, and Lubin]{dunning2017jump}
Iain Dunning, Joey Huchette, and Miles Lubin.
\newblock Jump: A modeling language for mathematical optimization.
\newblock \emph{SIAM Review}, 59\penalty0 (2):\penalty0 295--320, 2017.

\bibitem[Fawzi et~al.(2015)Fawzi, Saunderson, and Parrilo]{fawzi2015sparse}
Hamza Fawzi, James Saunderson, and Pablo~A Parrilo.
\newblock Sparse sum-of-squares certificates on finite abelian groups.
\newblock In \emph{2015 54th IEEE Conference on Decision and Control (CDC)},
  pages 5909--5914. IEEE, 2015.

\bibitem[Grabisch et~al.(2000)Grabisch, Marichal, and
  Roubens]{grabisch2000equivalent}
Michel Grabisch, Jean-Luc Marichal, and Marc Roubens.
\newblock Equivalent representations of set functions.
\newblock \emph{Mathematics of Operations Research}, 25\penalty0 (2):\penalty0
  157--178, 2000.

\bibitem[Laurent(2003)]{laurent2003lower}
Monique Laurent.
\newblock Lower bound for the number of iterations in semidefinite hierarchies
  for the cut polytope.
\newblock \emph{Mathematics of operations research}, 28\penalty0 (4):\penalty0
  871--883, 2003.

\bibitem[Laurent(2009)]{laurent2009sums}
Monique Laurent.
\newblock Sums of squares, moment matrices and optimization over polynomials.
\newblock In \emph{Emerging applications of algebraic geometry}, pages
  157--270. Springer, 2009.

\bibitem[Legat(2020)]{legat2020sumofsquares}
Beno{\^\i}t Legat.
\newblock Sumofsquares.jl: Sum-of-squares optimization in julia.
\newblock \url{https://github.com/jump-dev/SumOfSquares.jl}, 2020.

\bibitem[{MOSEK ApS}(2024)]{mosek2024}
{MOSEK ApS}.
\newblock The mosek optimization toolbox for matlab manual. version 10.0.
\newblock \url{https://docs.mosek.com/}, 2024.

\bibitem[Permenter and Parrilo(2012)]{permenter2012selecting}
Frank Permenter and Pablo~A Parrilo.
\newblock Selecting a monomial basis for sums of squares programming over a
  quotient ring.
\newblock In \emph{2012 IEEE 51st IEEE Conference on Decision and Control
  (CDC)}, pages 1871--1876. IEEE, 2012.

\bibitem[Sakaue et~al.(2016)Sakaue, Takeda, Kim, and Ito]{sakaue2016exact}
Shinsaku Sakaue, Akiko Takeda, Sunyoung Kim, and Naoki Ito.
\newblock Exact {SDP} relaxations with truncated moment matrix for binary
  polynomial optimization problems.
\newblock In \emph{Proceedings of the 5th International Conference on
  Continuous Optimization ICCOPT}, 2016.

\bibitem[Stobbe and Krause(2012)]{stobbe2012learning}
Peter Stobbe and Andreas Krause.
\newblock Learning {F}ourier sparse set functions.
\newblock In \emph{Artificial Intelligence and Statistics}, pages 1125--1133.
  PMLR, 2012.

\end{thebibliography}
\end{document}